\documentclass{amsart}
\usepackage{graphicx}
\usepackage{amsmath,amssymb,amsfonts,amsthm}
\usepackage{amstext}
\usepackage{longtable}
\usepackage[english]{babel}
\newcommand{\note}{\noindent {\bf Notation. }}

\newcommand{\ws}{\hspace{4pt}}
\newtheorem{theorem}{Theorem}

\newtheorem{proposition}{Proposition}

\newtheorem{lemma}{Lemma}

\newtheorem{defi}{Definition}
\usepackage[]{amssymb,amsopn}
\usepackage[]{amsmath}
\usepackage[]{eucal}
\usepackage[]{latexsym}
\usepackage{color}
\usepackage{leftindex}

\makeatletter
\@namedef{subjclassname@2020}{%
  \textup{2020} Mathematics Subject Classification}
\makeatother

\begin{document}

\title[Chromatic expansion with Bessel operator]{Chromatic expansion with Bessel operator of fractional order}
\author{M. Chegaar and \'A. P. Horv\'ath}

\subjclass[2020]{41A25, 42C10, 44A15, 47G30}
\keywords{Chromatic derivatives; fractional Bessel-Laplace operator; Hankel transform; spherical means; weighted polynomial
approximation; bandlimited reconstruction}
\thanks{}

\begin{abstract}
This paper develops a Bessel-chromatic expansion framework associated with fractional powers of the Bessel-Laplace operator. The construction combines methods of weighted polynomial approximation and of fractional differential operators.

Using the spectral representation of $(-\Delta_a)^{\frac{1}{2}}$, we define Bessel-chromatic derivatives and apply them
to weighted spherical means both at a general point and at the origin.
Different classes of weights on finite and infinite intervals are considered, with particular attention to cases where the inverse Hankel transform is explicit.
The convergence of the expansions is studied through Ces\`aro and de la Vall\'ee Poussin means. In the bandlimited case, the method gives reconstruction
formulas for weighted spherical means and, under suitable assumptions, recovery formulas for the original function.
Numerical examples illustrate the decay of the Bessel-chromatic coefficients and the accuracy of the corresponding reconstructions.
\end{abstract}
\maketitle

\medskip

\section{Introduction}

Many problems in analysis and applied mathematics involve data whose natural geometry is not Cartesian, but radial, cylindrical, or spherical. This type of
structure appears in radial signal processing, numerical differentiation of oscillatory data, wave propagation in symmetric media. With an external field the ordinary Fourier transform is not always the most convenient spectral tool. The Hankel transform, together with Bessel-type differential operators, is often more suitable because it reflects the geometry of the problem more directly; see, for example, \cite{ss,p,z1}.

A classical tool for reconstructing bandlimited functions is the Whittaker-Shannon-Kotel'nikov sampling theorem. In its simplest form, under
the usual normalization and bandlimiting assumptions, it gives
$$f(t)=\sum_{n\in\mathbb{Z}} f(n)\frac{\sin \pi(t-n)}{\pi(t-n)}.$$
This formula is fundamental in signal processing. However, in numerical applications it may require many samples, and the convergence can be slow when
only finite information is available. Taylor expansion has a different advantage: it uses local differential information at one point. But this is
also its limitation, since many derivatives may be needed to approximate a function on an interval. Chromatic expansions, introduced by
Ignjatovi\'c \cite{i0}, offer a useful middle ground. They combine differential information in the physical variable with an orthogonal expansion
in the spectral variable. Further developments and applications of chromatic derivatives and chromatic expansions can be found in \cite{i1,i2,i3,i4,iz,z2,z3,h1}.

The main idea can be seen first in the Fourier setting. If $p$ is a polynomial and the inverse Fourier transform is available, then formally
$$p\left(-i\frac{\partial}{\partial x}\right)f(x)=c\int_\mathbb{R}p(y)\hat{f}e^{ixy}dy.$$
That is if $\mathrm{supp}\hat{f}\subset [a,b]$, and $\{p_k\}$ are the orthonormal polynomials on $[a,b]$; $p_k\left(-i\frac{\partial}{\partial x}\right)f(0)=cc_k(\hat{f})$, i.e. the Fourier coefficients of $\hat{f}$ with respect to $\{p_k\}$. Then the inverse Fourier transform of the series gives the chromatic expansion of $f$. Chromatic expansions have proven to be useful tool for rapid recovery of functions as well as for numerical differentiation, see e.g. \cite{i1,i3}. Investigation of the underlying spaces is also important, see e.g. \cite{i2}. Multidimensional extension can be found e.g. in \cite{iz}, and the problem is discussed in case of generalized functions in \cite{z3}. The questions of different weighted cases and different integral transforms naturally arose and were examined in numerous papers, see e.g. \cite{z2,h1}. The relationship of wavelets and chromatic expansions is discussed in \cite{h1}. For the summary of recent results, see \cite{i4}.

In this paper we use the Bessel differential operator, which in one dimension is
$$B_{\alpha}:=\frac{\partial^2}{\partial x^2}+\frac{2\alpha+1}{x}\frac{\partial}{\partial x},$$
and the corresponding Hankel transform, $\mathcal{H}_\alpha$. Since
$$\mathcal{H}_\alpha((-B_{\alpha}f))(y)=y^2\mathcal{H}_\alpha(f)(y),$$
in Bessel setup one can choose the entire Bessel functions, $j_\alpha(\sqrt{\lambda} x)$ as eigenfunction of the differential operator and so the kernel of the integral operator, cf. e.g. \cite{z1}, or taking the square-root of the $-B_{\alpha}$ operator, one can arrive to a situation which is very similar to the sketched one. Here we choose the second solution. In several variables we work on the positive octant \(\mathbb{R}^d_+\) with
the Bessel-Laplace operator
$$\Delta_a=\sum_{i=1}^d B_{\alpha_i}, \qquad a_i=2\alpha_i+1,$$
and with the corresponding $d$-dimensional Hankel transform. This choice is motivated by the intention to provide estimates of the of convergence rate of chromatic expansions.

At first we mention, that similarly to the Fourier case, a Shannon-type sampling formula can be derived (see e.g. \cite{k} and \cite{j}), but it possesses  rather poor convergence properties. We give a brief description of it in the Appendix.\\
On the other hand, since Bessel and Hankel operators act on functions defined on the positive octant, the functions under study can be extended evenly, and  the investigation can be transferred to a symmetric interval with symmetric weight. Our estimations on the rate of convergence are based on the smoothness properties of the functions on the image side. Since smoothness can be lost during even extension, this method generally results in weaker convergence.

Below we define spherical Bessel chromatic expansion on the positive octant of the $d$-dimensional space. It approximates the weighted spherical mean of a function. In some cases the original function can be recovered as well. To define chromatic derivatives and series we need some facts about Bessel translation, fractional Bessel operators and Hankel transform. These basic facts are contained in the second section.

The main goal of the investigation is to estimate the speed of convergence. To this, we use some results of weighted polynomial approximation. Although, as mentioned in the remarks at the end of the sections, our method and theorems can be extended to general classes of weights, we focus on weights with explicit inverse Hankel transform.

 It is known that the weighted spherical mean is the transmutation operator intertwining the $d$-dimensional and $1$-dimensional Bessel-Laplace operators, see \cite{ss}. In the third section we define spherical Bessel chromatic derivatives by the extension of this theorem to fractional operators. In the unweighted bandlimited case the original function can also be recovered from spherical expansion.

 In the fourth section, considering the weighted spherical mean at zero, we simplify the method. Although the possibility of global reconstruction is lost, the method has wide applicability as in the unweighted (Laplace) case, see e.g. \cite{cdps}.\\
 We mention here, that the $d$-dimensional Laplace and Bessel-Laplace operators are closely related, see e.g. \cite{ch1}. Thus, the results of this section can be transferred to the square root of the Laplace operator and the Fourier transform. Of course, in this case, it is more natural to consider $-i\frac{\partial}{\partial x}$ instead of $(-\Delta)^{\frac{1}{2}}$.

 A short fifth section has also been added, which contains the method of approximation of functions with even entire functions of exponential type $\nu$. Here we gave an example to the joint application of the different approximations.

 Finally, the Appendix, besides Shannon sampling, contains some numerical illustrations of Bessel chromatic expansion.

\section{Notation, Preliminaries}

\subsection{Spaces} We introduce the next weighted spaces. On $\mathbb{R}^{d}_{+}:=\{(x_1, \dots ,x_d): x_i>0 \ws i=1, \dots ,d\}$ we introduce the $L^p_a(\mathbb{R}^{d}_{+})$ space, $1\le p<\infty$ with norm
\begin{equation}\label{La}\|f\|_{p,a}^p=\int_{\mathbb{R}^d_{+}}|f(x)|^px^adx,\end{equation}
where $a=(a_1, \dots , a_d)$, $a_i>0$, $i=1, \dots , d$; $x^a=\prod_{i=1}^dx_i^{a_i}$. For $p=\infty$ we use the standard $L^\infty(\mathbb{R}^{d}_{+})$ space.

We also need the function space $\mathcal{S}_{e}$, the set of even functions from the Schwartz class, i.e.
\begin{equation}\label{S}\mathcal{S}_{e}:=\left\{f\in \mathcal{S} : \frac{\partial^{2m+1}}{\partial x_i^{2m+1}}f\large{|}_{x_i=0}=0, \ws \forall m \in \mathbb{N}, \ws i=1, \dots , d\right\}.\end{equation}
$\mathcal{S}_{e}'$ is the dual space of $\mathcal{S}_{e}$. First, as usual, we introduce the concepts for functions from $\mathcal{S}_{e}$, then using that $\mathcal{S}_e$ is dense in $L^p_a$ $1\le p <\infty$, we can extend them to $L^p_a$ with an appropriate $p$. We introduce the notation
$$\langle f,g\rangle_a=\int_{\mathbb{R}^{d}_{+}}f(x)g(x)x^adx,$$
and we denote in the same way the effect of an $f\in \mathcal{S}_{e}'$ on a $g\in \mathcal{S}_{e}$.

\note

$H\subset \mathbb{R}^d$. Then $H_+$ stands for $H\cap \mathbb{R}^d_+$.

\subsection{Bessel function and translation}
The entire Bessel functions are
\begin{equation}\label{B}j_\alpha(z)=\Gamma(\alpha+1)\left(\frac{2}{z}\right)^\alpha J_\alpha(z)=\sum_{k=0}^\infty\frac{(-1)^k\Gamma(\alpha+1)}{\Gamma(k+1)\Gamma(k+\alpha+1)}\left(\frac{z}{2}\right)^{2k},\end{equation}
where $\alpha>-\frac{1}{2}$.
We introduce the following abbreviation, cf. \cite{ss}.
$$\mathbb{j}_a(x,\xi):=\prod_{i=1}^dj_{\alpha_i}(x_i\xi_i).$$

The entire Bessel functions, $j_\alpha(\lambda z)$ given by \eqref{B} are the eigenfunctions of $B_\alpha$, i.e.
$$B_\alpha j_\alpha(\lambda z)=-\lambda^2j_\alpha(\lambda z).$$
The other important property of the entire Bessel functions is that their value at zero is one, independently of the parameter. Moreover we have
\begin{equation}\label{j1}\|j_\alpha\|_{\infty, \mathbb{R}_+}=j_\alpha(0)=1.\end{equation}
Subsequently we assume that the parameter of the Bessel functions, $\alpha>-\frac{1}{2}$. We note that $j_{-\frac{1}{2}}(x)=\cos x$. Below we need the derivation formula:
\begin{equation}\label{bder}j_\alpha'(z)=-\frac{1}{2(\alpha+1)}zj_{\alpha+1}(z),\end{equation}
(see \cite{be})

Subsequently we need the Bessel translation. The Bessel translation of a function $f$ (see e.g. \cite{le}, \cite{p}, \cite{ss}) is
$$T_{a}^tf(x)=T_{a_d}^{t_d}\dots T_{a_1}^{t_1}f(x_1,\dots ,x_d),$$
where
\begin{equation}\label{tra1}T_{a_i}^{t_i}f(x_1,\dots ,x_d)$$ $$=\frac{\Gamma(\alpha_i+1)}{\sqrt{\pi}\Gamma\left(\alpha_i+\frac{1}{2}\right)}\int_0^\pi f(x_1,\dots,\sqrt{x_i^2+t_i^2 -2x_it_i\cos\vartheta_i}, x_{i+1}, \dots, x_d)\sin\vartheta^{2\alpha_i}d\vartheta_i.\end{equation}

The immediate consequences of the definition of the Bessel translation, cf. \eqref{tra1} are that  $T_a$ is positive operator and
\begin{equation}\label{trid}T^0_af(x)=f(x); \ws \ws \ws T_{a}^tf(x)=T_{a}^xf(t).\end{equation}
Bessel translation is bounded in $L^p_a$, i.e. for all $a$ ($a_i> 0$),
\begin{equation}\label{t1}\|T_{a}^tf(x)\|_{p,a}\le \|f\|_{p,a}, \ws \ws 1\le p \le\infty,\end{equation}
see e.g. \cite{le}. If it does not cause any misunderstanding, we omit index $a$.

For sake of simplicity formulating in one dimension, for an $f\in C^2(\mathbb{R}_+)$, $T_\alpha^tf(x)=u(t,x)$ is the solution of the Cauchy problem
$$B_{\alpha,t}u(x,t)=B_{\alpha,x}u(x,t); \ws \ws u(x,0)=f(x), \ws \ws \partial_tu(x,0)=0.$$
Thus for $t,x>0$ we have
$$T_{\alpha}^tj_\alpha(\lambda x)=j_\alpha(\lambda t)j_\alpha(\lambda x).$$
Let $f\in  \mathcal{S}_e'$, $g\in \mathcal{S}_e$. As in the unweighted case translation and differential operator are defined
\begin{equation}\label{tracs}\langle T^tf,g\rangle_a= \langle f,T^tg\rangle_a\end{equation}
see e.g. \cite[(2.18)]{p},
\begin{equation}\label{Bcser}B_\alpha(T^tf)=T^t(B_\alpha f),\end{equation}
see e.g. \cite[(2.20)]{p}.

The generalized convolution indicated by  Bessel translation is
\begin{equation}\label{conv1}f*_a g =\int_{\mathbb{R}^d_+}T_{a,x}^tf(x)g(x)x^adx.\end{equation}

Throughout the paper we use the notation $x=r\theta$, where $r=|x|=\sqrt{\sum_{i=1}^d x_i^2}$ and $|\theta|=1$. The spherical mean of $f$ at $x$ is a function of $r$ is as follows.
\begin{equation}\label{sphm}M_{sph,a}(f)(x,t):=\frac{1}{|S^d_{1+}|_a}\int_{S^d_{1+}}T^{t\vartheta}f(x)\vartheta^adS(\vartheta),\end{equation}
where $dS(\theta)$ stands for the integration on the sphere and $|S^d_{1+}|_a$ is a normalization constant, cf. \eqref{FS}. In one dimension $M_{sph,a}(f)(x,r)= T_a^rf(x)$, see e.g. \cite{ss}. We also remark, that by the commutativity of Bessel translation we have
$$M_{sph,a}(f)(0,t)=\frac{1}{|S^d_{1+}|_a}\int_{S^d_{1+}}f(t\vartheta)\vartheta^adS(\vartheta),$$
i.e. $M_{sph,a}(f)(0,t)$ is just the average of $f$ on the sphere $S(0,t)_+$.

\subsection{Hankel transform and translation}

One of our main tool is the Hankel transform. The Hankel transform $\mathcal{H}_a$ of $f\in L^1_a$ is a function of $\xi$ defined by
\begin{equation}\label{H} \mathcal{H}_a(f,\xi)=\hat{f}(\xi)=\int_{\mathbb{R}^d_+}f(x)\mathbb{j}_a(x,\xi)x^adx.\end{equation}
$\mathcal{H}_a$ maps $\mathcal{S}_{e}$ to $\mathcal{S}_{e}$ and is bounded in $L^2_a$, i.e. it can be extended from the dense subspace to $L^2_a$, as in the Fourier case. It maps $L^p_a$ to the dual space $L^{p'}_a$, $1\le p\le 2$, moreover the next inversion formula holds.
\begin{proposition}\label{hank}\cite[page 38]{ss} If $f\in L^1_a(\mathbb{R}^d_+)$ is of bounded variation in a neighborhood of a point $x$ of continuity of $f$, then
\begin{equation}\label{bi}f(x)=\mathcal{H}_a^{-1}(\hat{f}(\xi))(x)=\frac{2^{d-|a|}}{\prod_{i=1}^d\Gamma^2(\alpha_i+1)}\int_{\mathbb{R}^d_+}\hat{f}(\xi)\mathbb{j}_a(x,\xi)\xi^ad\xi.\end{equation}\end{proposition}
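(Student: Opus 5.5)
The inversion formula is a product statement: both $\mathbb{j}_a(x,\xi)=\prod_i j_{\alpha_i}(x_i\xi_i)$ and the weight $\xi^a=\prod_i\xi_i^{a_i}$ factor over coordinates. I would therefore first prove the one-dimensional formula for $\alpha>-\frac12$ and then obtain the $d$-dimensional one by iterating it coordinatewise. In one dimension, writing $j_\alpha(z)=\Gamma(\alpha+1)(2/z)^\alpha J_\alpha(z)$ by \eqref{B}, we have
$$\hat f(\xi)=\Gamma(\alpha+1)2^\alpha\xi^{-\alpha}\int_0^\infty f(x)x^{-\alpha}J_\alpha(x\xi)x^{2\alpha+1}dx .$$
Setting $g(x)=x^{\alpha+1/2}f(x)$, this becomes
$$\hat f(\xi)=\Gamma(\alpha+1)2^\alpha\xi^{-\alpha-1/2}\int_0^\infty g(x)J_\alpha(x\xi)\sqrt{x\xi}\,dx ,$$
which is the classical Hankel transform $\tilde g(\xi)=\int_0^\infty g(x)J_\alpha(x\xi)\sqrt{x\xi}\,dx$. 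Hence the statement reduces to the classical Hankel inversion theorem (Watson, Titchmarsh): if $\int_0^\infty|g(x)|x^{-1/2}\,dx$ type integrability holds and $g$ is of bounded variation near a point of continuity $x$, then
$$g(x)=\lim_{R\to\infty}\int_0^R\tilde g(\xi)J_\alpha(x\xi)\sqrt{x\xi}\,d\xi .$$
Undoing the substitution produces the factor $\Gamma(\alpha+1)^{-2}2^{-2\alpha}$, with $2^{-2\alpha}=2^{1-a}$, and the weight $\xi^{2\alpha+1}$. This gives exactly the constant $2^{d-|a|}/\prod_i\Gamma^2(\alpha_i+1)$ when $d=1$.

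To make the reduction self-contained, I would prove the one-dimensional inversion directly via a Dirichlet-kernel argument. Substitute the definition of $\hat f$ into $\int_0^R\hat f(\xi)j_\alpha(x\xi)\xi^a\,d\xi$ and use Fubini, which is legitimate because $f\in L^1_a$ and $|j_\alpha|\le1$ by \eqref{j1}. This gives $\int_0^\infty f(y)K_R(x,y)y^a\,dy$, where the kernel is computed in closed form by the Christoffel–Darboux-type (Lommel) integral
$$\int_0^R J_\alpha(x\xi)J_\alpha(y\xi)\xi\,d\xi=\frac{R\bigl(xJ_{\alpha+1}(Rx)J_\alpha(Ry)-yJ_{\alpha+1}(Ry)J_\alpha(Rx)\bigr)}{x^2-y^2},$$
which follows from \eqref{bder} and the eigen-equation for $j_\alpha$. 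Then I would split the $y$-integral into three regions. Near $y=x$ the asymptotics $J_\alpha(z)\approx\sqrt{2/(\pi z)}\cos(z-\alpha\pi/2-\pi/4)$ turn the kernel into a Fourier Dirichlet kernel $\sin(R(x-y))/(\pi(x-y))$ plus an integrable error. Away from $x$, the Riemann–Lebesgue lemma applies together with $f\in L^1_a$. Near $y=0$ the weight $y^a$ and the bound $|j_\alpha|\le1$ give control.

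I expect the main obstacle to be the local step. Bounded variation and continuity at $x$ must yield convergence of the Dirichlet integral to $f(x)$, via the Dirichlet–Jordan test applied to $f(y)(y/x)^{a/2}$. Simultaneously one must check uniformly that the kernel remainders (large-argument and small-argument regimes of $J_\alpha$) are bounded by integrable functions against $|f|y^a$.

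For $d>1$ I would apply the one-dimensional result in each variable successively, using Fubini on $\mathbb{R}^d_+$ and interpreting the integral as an iterated limit over boxes $\prod_i[0,R_i]$. Here bounded variation near $x$ is taken in the sense of Hardy–Krause. This is where the paper's citation \cite{ss} is doing real work, and I would simply invoke it for the multivariate bookkeeping.
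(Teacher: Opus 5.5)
The paper does not prove this statement; it only quotes \cite[page 38]{ss}. Your reduction via $g(y)=y^{\alpha+\frac12}f(y)$ is the natural route, and your constant and your Lommel formula for the kernel are correct. The argument fails at the origin, however, not at the local step you single out.

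The classical Hankel inversion theorem (Hankel, Watson, Titchmarsh) for $\tilde g(\xi)=\int_0^\infty g(y)J_\alpha(\xi y)\sqrt{\xi y}\,dy$, $\alpha\ge-\frac12$, requires $g\in L^1(0,\infty)$, i.e. $\int_0^\infty|f(y)|y^{\alpha+\frac12}dy<\infty$. This is not the condition you wrote. The hypothesis $f\in L^1_a$ only gives $\int_0^\infty|f(y)|y^{2\alpha+1}dy<\infty$. Since $0<\alpha+\frac12<2\alpha+1$, this suffices at infinity but is strictly weaker near $y=0$, and you never check the needed condition there.

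Your self-contained version has the same hole. The bound $|j_\alpha|\le1$ only gives $|K_R(x,y)|\le R^{2\alpha+2}/(2\alpha+2)$. By your own Lommel formula, for fixed $x>0$ and $y<x/2$ the first term of $K_R$ is a constant times $R(xy)^{-\alpha}xJ_{\alpha+1}(Rx)J_\alpha(Ry)/(x^2-y^2)$. This has size up to $R^{\alpha+\frac12}$ for $y\le 1/R$, and up to $y^{-\alpha-\frac12}$, oscillating in $R$, for $y\ge 1/R$. So the contribution of $(0,\delta)$ is essentially $c_xR^{\alpha+\frac12}\cos(Rx-\phi)\,\mathcal{H}_\alpha(f\chi_{(0,\delta)})(R)$. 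You would need $\mathcal{H}_\alpha(f\chi_{(0,\delta)})(R)=o(R^{-\alpha-\frac12})$, but the Riemann--Lebesgue lemma for $f\in L^1_a$ gives only $o(1)$.

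No sharper argument can close this, because the statement as written is false. Take $d=1$, $\alpha=\frac12$ (so $a=2$, $j_{1/2}(z)=\sin z/z$) and $f(y)=y^{-\beta}\chi_{(0,1]}(y)$ with $2<\beta<3$. Then $f\in L^1_a$ and $f$ is smooth near every $x\in(0,1)$, but
\[
\hat f(\xi)=\xi^{\beta-3}\int_0^\xi s^{1-\beta}\sin s\,ds=C_\beta\,\xi^{\beta-3}+O(\xi^{-2}),\qquad C_\beta=\Gamma(2-\beta)\sin\tfrac{\pi(2-\beta)}{2}>0 .
\]
Hence
\[
\int_1^R\hat f(\xi)j_{1/2}(x\xi)\xi^2\,d\xi=\frac{C_\beta}{x}\int_1^R\xi^{\beta-2}\sin(x\xi)\,d\xi+O(\log R)=-\frac{C_\beta R^{\beta-2}\cos(xR)}{x^2}+O(\log R),
\]
which is unbounded as $R\to\infty$. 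This is the radial form of the failure of localization for spherical Fourier integrals in $L^1(\mathbb{R}^3)$; for general $\alpha$, take $\alpha+\frac32<\beta<2\alpha+2$.

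You therefore have two options. You can add the hypothesis $\int_0^1|f(y)|y^{\alpha+\frac12}dy<\infty$ (coordinatewise for $d>1$), after which your one-dimensional reduction goes through. Alternatively, read the integral as a limit of summability means, e.g.\ Gauss--Weierstrass means, whose kernel is a positive approximate identity.

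Finally, the case $d>1$ is not mere bookkeeping that can be delegated to \cite{ss}. Inverting first in $\xi_2$ requires, for every $\xi_1$, that $y_2\mapsto\int_0^\infty f(y_1,y_2)j_{\alpha_1}(y_1\xi_1)y_1^{a_1}dy_1$ be continuous and of bounded variation near $x_2$. That is a condition on $f$ for all $y_1$, not just near $x$, and local Hardy--Krause variation does not supply it. You must also specify in which sense (iterated, over boxes, over balls) the non-absolutely convergent integral over $\mathbb{R}^d_+$ is taken.
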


The inverse Hankel transform of $f$ is denoted by
$$\mathcal{H}_a^{-1}f=:\check{f}.$$
 Subsequently we need the Hankel transform of a radial function. We start with the following formula. Let $S_{1+}^d$ be the positive part of the $d$-dimensional unite sphere.
 \begin{equation}\label{besatl} \frac{1}{|S_{1+}^d|_a}\int_{S_{1+}^d}\mathbb{j}_a(r\Theta,\xi))\Theta^adS(\Theta)=j_{\frac{d+|a|}{2}-1}(r|\xi|),\end{equation}
  where
 \begin{equation}\label{FS}|S_{1+}^d|_a=\int_{S_{1+}^n}\Theta^adS(\Theta)=\frac{\prod_{i=1}^d \Gamma(\alpha_i+1)}{2^{d-1}\Gamma\left(\frac{d+|a|}{2}\right)},\end{equation}
 and $dS$ stands for the integration on the sphere, cf.\cite[(3.140)]{ss}.

 Then let $f$ be a radial function, i.e. $f(x)=\varphi(|x|)$, where $\varphi$ is a function of one variable. Recalling that $r=|x|$, $\mathcal{H}_a(f)$ is also radial, furthermore we have
\begin{equation}\label{rh}\mathcal{H}_a(f)(\xi)=|S_{1+}^d|_a\int_0^\infty \varphi(r)j_{\frac{d+|a|}{2}-1}(|\xi|r)r^{d+|a|-1}dr,\end{equation}
 see \cite[Lemma 3.2]{esk}.

 \medskip

 \note
 For simplicity we introduce the following abbreviation.
 $$\gamma:=\frac{d+|a|}{2}-1.$$

 \medskip

In the sequel we use the following observation. If $f\in L^1_a$ then we have
\begin{equation}\label{szm}\mathcal{H}_a(M_{sph,a}f(0,r))(\varrho)=M_{sph,a}\hat{f}(0,\varrho).\end{equation}
Indeed, denoting by $\xi=\varrho\psi$, in view of \eqref{besatl}, \eqref{sphm} and \eqref{trid} we have
$$M_{sph,a}(\hat{f})(0,\varrho)=\frac{1}{|S^d_{1+}|_a}\int_{S^d_{1+}}\int_{\mathbb{R}^d_+}f(x)\mathbb{j}_a(x,\xi)x^adx\psi^adS(\psi)=\int_{\mathbb{R}^d_+}f(x)j_{\gamma}(r\varrho)x^adx$$ $$=|S^d_{1+}|_a\int_0^\infty \frac{1}{|S^d_{1+}|_a}\int_{S^d_{1+}}f(r\theta)\theta^adS(\theta)j_{\gamma}(r\varrho)r^{2\gamma+1}dr=(\mathcal{H}_a(M_{sph,a}(f)(0,r))(\varrho).$$

We also need the next formula.
\begin{proposition}\label{t73}\cite[Theorem 73]{ss} For all $f\in\mathcal{S}_e$ we have
\begin{equation}\label{Hsphm}M_{sph,a}(f)(x,t)=\mathcal{H}_a^{-1}\left(j_{\gamma}(t|\xi|)\mathcal{H}_a(f)(\xi)\right)(x).\end{equation}
\end{proposition}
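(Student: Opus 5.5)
The plan is to go through the Hankel transform side. Proposition~\ref{hank} together with the $L^2_a$ theory lets us invert on $\mathcal{S}_e$, which $\mathcal{H}_a$ maps onto itself. So it suffices to show that the Hankel transform in $x$ of $M_{sph,a}(f)(x,t)$ equals $j_{\gamma}(t|\xi|)\hat f(\xi)$. The function $\xi\mapsto j_\gamma(t|\xi|)\hat f(\xi)$ is again in $\mathcal{S}_e$: $j_\gamma(t|\cdot|)$ is an even entire function of $|\xi|^2$, bounded by $1$ by \eqref{j1}, and all its derivatives grow at most polynomially. Hence the inverse transform in \eqref{Hsphm} makes sense pointwise, and it is enough to compare the two sides after applying $\mathcal{H}_a$.

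\emph{Step 1.} Compute the transform of a single translate. Pair $T^{t\vartheta}f$ with the kernel $\mathbb{j}_a(\cdot,\xi)$ and use the symmetry \eqref{tracs} of the translation. Then apply the product formula $T^{s}_{\alpha}j_\alpha(\lambda\,\cdot)(x)=j_\alpha(\lambda s)j_\alpha(\lambda x)$ coordinatewise; this is legitimate because $T_a$ factors into one-dimensional translations. The result is
$$\mathcal{H}_a(T^{t\vartheta}f)(\xi)=\mathbb{j}_a(t\vartheta,\xi)\hat f(\xi).$$
Strictly, $\mathbb{j}_a(\cdot,\xi)$ is only a bounded smooth even function, not an element of $\mathcal{S}_e$. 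I would therefore justify the pairing directly: write $T^{t\vartheta}f$ as the integral \eqref{tra1}, apply Fubini (which is allowed since $f\in L^1_a$ and $T$ is bounded by \eqref{t1}), and use the product formula for the kernel.

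\emph{Step 2.} Average over the sphere. Integrate Step 1 against $\vartheta^a\,dS(\vartheta)/|S^d_{1+}|_a$ and exchange the $\vartheta$-integral with the Hankel integral. Fubini applies because the sphere has finite measure, $|\mathbb{j}_a|\le1$, and $\|T^{t\vartheta}f\|_{1,a}\le\|f\|_{1,a}$ uniformly in $\vartheta$. This gives
$$\mathcal{H}_a(M_{sph,a}(f)(\cdot,t))(\xi)=\hat f(\xi)\,\frac{1}{|S^d_{1+}|_a}\int_{S^d_{1+}}\mathbb{j}_a(t\vartheta,\xi)\vartheta^a\,dS(\vartheta).$$
The $\vartheta$-integral is symmetric in its two arguments. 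Formula \eqref{besatl}, applied with $r=t$ and the roles of $x$ and $\xi$ read appropriately, evaluates it as $j_\gamma(t|\xi|)$.

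\emph{Step 3.} Invert. By Step 2, $M_{sph,a}(f)(\cdot,t)$ lies in $L^1_a$ and is continuous; continuity follows from continuity of the translation, via dominated convergence in \eqref{tra1}. Its transform $j_\gamma(t|\xi|)\hat f$ is integrable. Applying Proposition~\ref{hank}, or alternatively the $L^2_a$ inversion combined with continuity, yields \eqref{Hsphm} at every $x$.

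The main obstacle is Step 1, specifically making the duality \eqref{tracs} and the product formula rigorous when tested against the non-Schwartz kernel $\mathbb{j}_a(\cdot,\xi)$. If the direct Fubini argument becomes messy, the first fallback is to approximate $\mathbb{j}_a(\cdot,\xi)$ by $\mathbb{j}_a(\cdot,\xi)e^{-\varepsilon|\cdot|^2}\in\mathcal{S}_e$ and let $\varepsilon\to0$ by dominated convergence, using the bound \eqref{t1}.
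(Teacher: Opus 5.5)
Your proof is correct. The paper does not prove this statement; it quotes \cite[Theorem 73]{ss}. Your route is the standard argument: apply \eqref{HT} to each translate $T^{t\vartheta}f$, use Fubini over $S^d_{1+}$, evaluate the spherical average by \eqref{besatl}, then invert. It is also the same computation the paper carries out at $x=0$ when verifying \eqref{szm}.

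The one imprecision is in Step~1, whose identity is exactly \eqref{HT} as already listed in the paper. A bare Fubini on \eqref{tra1} does not move the translation from $f$ onto $\mathbb{j}_a(\cdot,\xi)$. That step needs the symmetry \eqref{tracs}, i.e.\ a change of variables producing the symmetric kernel of $T$. Your $\varepsilon$-regularization fallback, using \eqref{tracs} together with dominated convergence, is therefore the right way to make Step~1 self-contained.
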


\medskip

The relationship of Hankel transform to translation and convolution is as follows. For $f\in\mathcal{S}_e'$
\begin{equation}\label{HT}\mathcal{H}_a(T_a^yf(x))(\xi)=\mathbb{j}_a(\xi,y)\hat{f}(\xi); \ws \ws \ws T_a^y\hat{f}(\xi)=\mathcal{H}_a(\mathbb{j}_a(x,y)f(x))(\xi),\end{equation}
see e.g. \cite[(3.158)]{ss}.

Bessel convolution (cf. \eqref{conv1}) possesses all the main properties of the standard one, i.e.
\begin{equation}\label{cca}f*_a g =g*_a f,\ws \ws f*_a(g*_ah)= (f*_ag)*_ah.\end{equation}
Furthermore, Young's inequality also holds, i. e. if $1\le p, q, r \le \infty$ with $\frac{1}{r}=\frac{1}{p}+\frac{1}{q}-1$; if $f\in L^p_a$ and $g \in L^q_a$, then
\begin{equation}\label{Y}\|f*_ag\|_{r,a} \le \|f\|_{p,a}\|g\|_{q,a},\end{equation}
see \cite[(3.178)]{ss}.

The following property shows that the effect of the Hankel transform on the Bessel convolution coincides with the effect of the Fourier transform on the standard convolution, which is crucial in these studies. For $f,g \in \mathcal{S}_e$ we have
\begin{equation}\label{CH} \mathcal{H}_a(f*_ag)=\hat{f}\hat{g},\end{equation}
see e.g. \cite[(3.176)]{ss}.

Now we list the Hankel transforms of some functions which will be useful subsequently.

 Let $\alpha,\mu >-1$, $(a^2-x^2)^\mu_+=\left\{\begin{array}{ll}(a^2-x^2)^\mu, \ws 0\le x\le a,\\ 0, \ws x>a\end{array}\right.$. Then we have
\begin{equation}\label{H(Imu)}\mathcal{H}_\alpha((a^2-x^2)^{\mu}_+)(y)=\int_0^a(a^2-x^2)^\mu j_\alpha(xy)x^{2\alpha+1}dx\end{equation} $$=\frac{2^\alpha\Gamma(\alpha+1)2^\mu\Gamma(\mu+1)}{\Gamma(\alpha+\mu+2)}a^{\alpha+\mu+1}j_{\alpha+\mu+1}(ay),$$
in particular
\begin{equation}\label{H(I)}\mathcal{H}_\alpha(\chi_{[0,a]})(y)=\frac{2^\alpha a^{\alpha+1}}{\alpha+1}j_{\alpha+1}(ay),\end{equation}
see \cite[(4.38)]{o}.

\begin{equation}\label{HF} \int_0^\infty e^{-px^2}j_{\nu}(cx)x^{2\nu+1}dx=\frac{\Gamma(\nu+1)}{2p^{\nu+1}}e^{-\frac{c^2}{4p}},\end{equation}
see \cite[2.12.9 (3)]{pbm} and \eqref{B}.

\subsection{Fractional Bessel operator}

The Bessel-Laplace operator on $\mathbb{R}^{d}_{+}$ is defined as follows.
\begin{equation}\label{bdo}\Delta_a:=\sum_{i=1}^d B_{a_i}, \ws \mbox{where} \ws B_{\alpha_i}:=\frac{\partial^2}{\partial x_i^2}+\frac{2\alpha_i+1}{x_i}\frac{\partial}{\partial x_i},\end{equation}
and
\begin{equation}\label{alfa} a_i=2\alpha_i+1, \end{equation}
i.e. $\alpha_i>-\frac{1}{2}$, $i=1, \dots , d$. If $d=1$, $\Delta_a=B_\alpha$.

Let $f\in \mathcal{S}_e$. The relation of the Bessel-Laplace operator (cf. \eqref{bdo}) and of the Hankel transform is as follows.
$$\mathcal{H}_a(-\Delta_af)(\xi)=|\xi|^2\hat{f}(\xi),$$
see e.g. \cite{p}.

Let $\varphi \in \mathcal{S}_e$, $0<s<2$. Then the fractional power of the Bessel-Laplace operator is
\begin{equation}\label{fracl} (-\Delta_a)^{\frac{s}{2}}\varphi(x)=C(d,a,s)\int_{\mathbb{R}^d_{+}} \frac{\varphi(x)-T^y_a\varphi(x)}{|y|^{d+|a|+s}}y^ady, \end{equation}
see \cite{l} and in one dimension \cite{bg}. Besides other different definitions, see e.g. \cite{ch1} and the references therein, for an $f\in\mathcal{S}_e'$ we can define $(-\Delta_a)^{\frac{1}{2}}f$ as via Hankel transform as follows. Let $\varphi$ as above. Then
\begin{equation}\label{fraclh}\left(\mathcal{H}_a(-\Delta_a)^{\frac{s}{2}}\varphi\right)(\xi)=|\xi|^s\hat{\varphi}(\xi),\end{equation}
see \cite{l}.

\medskip
 We define he fractional power of the Bessel-Laplace operator with exponent greater then one as $(-\Delta_a)^{k+\frac{s}{2}}f:=(-\Delta_a)^{\frac{s}{2}}((-\Delta_a)^{k}f)$.  For functions $\varphi \in \mathcal{S}_e$, $f\in\mathcal{S}_e'$ we have
\begin{equation}\label{cs}(-\Delta_a)^{\frac{s}{2}}(\varphi*_af)=(-\Delta_a)^{\frac{s}{2}}(\varphi)*_af=\varphi*_a((-\Delta_a)^{\frac{s}{2}}f),\end{equation}
see \cite[(50)]{ch}.

The other important property of the Bessel operator is the commutativity with translation, i.e. for $f\in \mathcal{S}_e'(\mathbb{R}_+)$ $B_{\alpha}(T_\alpha^{y}f)=T_\alpha^{y}(B_{\alpha}f)$, see e.g. \cite[(2.20)]{p}. Thus, with $|y|=t$ we have
\begin{equation}\label{BT}(\Delta_a)_xM_{sph,a}(f)(x,t)=M_{sph,a}((\Delta_a)_xf))(x,t).\end{equation}
On the other hand we have the next proposition.

\begin{proposition}\label{intw0}\cite[Theorem 36]{ss} Let $f\in C^2_e$. The weighted spherical mean is the transmutation operator intertwining $\Delta_a$ and $B_\gamma$, i.e.
\begin{equation}\label{intw}(B_\gamma)_t M_{sph,a}(f)(x,t)=M_{sph,a}((\Delta_a)_xf)(x,t).\end{equation}\end{proposition}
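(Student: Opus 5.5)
The plan is to verify \eqref{intw} on the Hankel transform side, using Proposition \ref{t73}. The Bessel functions $j_\gamma(t|\xi|)$ are eigenfunctions of $B_\gamma$ acting in the radial variable $t$, so the intertwining property should reduce to the single identity $\mathcal{H}_a(-\Delta_a f)(\xi)=|\xi|^2\hat f(\xi)$.

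First I work with $f\in\mathcal{S}_e$ and write, by \eqref{Hsphm} and the inversion formula \eqref{bi},
$$M_{sph,a}(f)(x,t)=\frac{2^{d-|a|}}{\prod_{i=1}^d\Gamma^2(\alpha_i+1)}\int_{\mathbb{R}^d_+}j_\gamma(t|\xi|)\hat f(\xi)\mathbb{j}_a(x,\xi)\xi^a\,d\xi .$$
Since $\hat f\in\mathcal{S}_e$ and $|j_\gamma|\le 1$ by \eqref{j1}, I may differentiate under the integral twice in $t$. For the $t$-derivatives of $j_\gamma(t|\xi|)$ I use \eqref{bder}; they are bounded by $C(1+|\xi|^2)$ uniformly in $t$, so dominated convergence applies.

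The eigenfunction property $B_\gamma j_\gamma(\lambda t)=-\lambda^2 j_\gamma(\lambda t)$ with $\lambda=|\xi|$ then gives
$$(B_\gamma)_tM_{sph,a}(f)(x,t)=\mathcal{H}_a^{-1}\big(j_\gamma(t|\xi|)\,(-|\xi|^2)\hat f(\xi)\big)(x).$$
Next I use $-|\xi|^2\hat f(\xi)=\mathcal{H}_a(\Delta_a f)(\xi)$. Since $\Delta_a f\in\mathcal{S}_e$, Proposition \ref{t73} applied to $\Delta_a f$ identifies the right-hand side with $M_{sph,a}(\Delta_a f)(x,t)$, which is \eqref{intw} for Schwartz functions.

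It remains to pass from $\mathcal{S}_e$ to $f\in C^2_e$, and I expect this to be the main obstacle, since the Hankel side representation is not available directly there. There are two possible routes.

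The first is an approximation argument. I take $f_n\in\mathcal{S}_e$ (mollified and cut off) converging to $f$ in $C^2$ on compacts. The definition \eqref{sphm} together with \eqref{tra1} shows that $M_{sph,a}(f_n)(x,t)$ and $M_{sph,a}(\Delta_a f_n)(x,t)$ converge locally uniformly, because only values on a compact set are used. The identity then passes to the limit in the distributional sense in $t$. Since the right-hand side is continuous, the equation $B_\gamma u=g$ with $\partial_tu(x,0)=0$ upgrades this to a classical identity.

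The second route is a direct check, which also explains the result. Using \eqref{Bcser}, write $T^{t\vartheta}$ as a function $u(t\vartheta)$ with $\Delta_{a,y}u=\Delta_{a,x}u$, where $u(y):=T^yf(x)$. Then $M_{sph,a}(f)(x,t)$ is the weighted spherical average of $u$ over $S(0,t)_+$. The Bessel-Laplacian of $u$ splits into a radial part $B_\gamma$ in $r=|y|$ and an angular part, and the angular part integrates to zero against $\vartheta^a\,dS$ by the weighted divergence theorem; the boundary terms on the coordinate hyperplanes vanish because $u$ is even. I would use this Darboux-type computation as the fallback if the density argument proves delicate.
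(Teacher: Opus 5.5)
The paper gives no proof of this statement; it is quoted from \cite[Theorem 36]{ss}. There is therefore no argument to match against, and your proposal is correct on its own terms.

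Your first route is the Hankel-side strategy the paper itself uses in the Lemma establishing \eqref{fBsphm}. You replace $\varrho^s$ by $-\varrho^2$ via $B_\gamma j_\gamma(t\varrho)=-\varrho^2 j_\gamma(t\varrho)$. Run with $\varrho^{2k+s}$, the same computation would give \eqref{fBsphm} directly, without the appeal to Proposition \ref{intw0} made there. The real difference is the density step. The paper extends by boundedness in $W^{1,p}_{\Delta_a}$. You use local uniform convergence, which is the right choice for $C^2_e$: no integrability is assumed, and \eqref{tra1}, \eqref{sphm} average against probability measures over compact sets.

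Your second route is the classical Darboux (weighted Gauss--Ostrogradsky) computation. It works pointwise for every $f\in C^2_e$, with no decay assumption, no Hankel transform and no approximation. It needs only $\Delta_{a,y}T^yf(x)=T^y(\Delta_af)(x)$, from \eqref{Bcser} and \eqref{trid}, and the vanishing of the flux $y^a\partial_{y_i}u$ on the flat faces. However, it relies on the locality of $\Delta_a$ and does not extend to $(-\Delta_a)^{s/2}$. That extension is exactly what the Hankel route buys, which is why the paper uses it for \eqref{fBsphm}.

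A few points in the first route are worth writing out.
\begin{itemize}
\item Local uniform convergence of $\Delta_a f_n$ up to the hyperplanes $\{x_i=0\}$ follows from $\frac{1}{x_i}\partial_{x_i}g(x)=\int_0^1\partial_{x_i}^2g(x_1,\dots,sx_i,\dots,x_d)\,ds$. This holds because $\partial_{x_i}g=0$ on $\{x_i=0\}$. Mollify the even extension with an even kernel so that the $f_n$ keep this property.
\item Passing from the distributional to the classical identity needs no condition at $t=0$. It only needs $(t^{2\gamma+1}u')'=t^{2\gamma+1}g$ with $g$ continuous on $(0,\infty)$.
\item If \eqref{Hsphm} were itself proved from the Darboux equation in \cite{ss}, your argument would be circular. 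You can avoid this by deriving \eqref{Hsphm} directly: average \eqref{HT} over $y=t\vartheta$ against $\vartheta^a\,dS$ and use \eqref{besatl}.
\end{itemize}
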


\subsection{Spaces, continuation}
The Bessel-Sobolev spaces on $\mathbb{R}^d_{+}$ for $1\le p\le \infty$ are
$$W^{m,p}_{\Delta_a}:=\left\{f: \mathbb{R}^d_{+} \to \mathbb{R} : \Delta_a^k f \in  L^p_a, \ws k=0, \dots , m \right\},$$ $$ \|f\|_{W^{m,p}_{\Delta_a}}=\left(\sum_{k=0}^m \|\Delta_a^k f\|_{p,a}^p\right)^{\frac{1}{p}}.$$
As in the standard case, derivations are understood in the weak sense. For more information on Bessel-Sobolev spaces see \cite{ch}.

\begin{proposition}\label{gammadelta}\cite[Lemma 3.9]{ch1}
Let $s \in (0,2)$, $1\le p<\infty$. If $f\in \mathcal{S}_{e}$, then
\begin{equation}\label{D}\left\|(-\Delta_a)^{\frac{s}{2}}f\right\|_{p,a}\le c \|f\|_{W^{1,p}_{\Delta_a}},\end{equation}
where $c=c(n,a,p)$.
\end{proposition}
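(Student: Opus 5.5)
Since the statement is cited from \cite[Lemma 3.9]{ch1}, we reconstruct a proof from the integral representation \eqref{fracl}.

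Fix $f\in\mathcal{S}_e$ and split the integral in \eqref{fracl} at $|y|=1$:
$$(-\Delta_a)^{\frac{s}{2}}f(x)=C\int_{|y|<1}\frac{f(x)-T^y f(x)}{|y|^{d+|a|+s}}y^ady+C\int_{|y|\ge 1}\frac{f(x)-T^y f(x)}{|y|^{d+|a|+s}}y^ady=:I_1(x)+I_2(x).$$

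\textbf{Far part $I_2$.} Apply Minkowski's integral inequality and the contraction property \eqref{t1}. For each $y$,
$$\|f-T^yf\|_{p,a}\le 2\|f\|_{p,a}.$$
The radial integral $\int_1^\infty r^{-(d+|a|+s)}r^{d+|a|-1}dr=\int_1^\infty r^{-s-1}dr$ converges because $s>0$. Hence $\|I_2\|_{p,a}\le c\|f\|_{p,a}$.

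\textbf{Near part $I_1$.} First replace $T^yf(x)$ by its spherical average. Writing $y=t\vartheta$ in polar coordinates,
$$\int_{|y|<1}\frac{f(x)-T^yf(x)}{|y|^{d+|a|+s}}y^ady=|S^d_{1+}|_a\int_0^1\frac{f(x)-M_{sph,a}(f)(x,t)}{t^{1+s}}\,dt.$$
Next write $u(t):=M_{sph,a}(f)(x,t)$, which is even in $t$ with $u(0)=f(x)$. By Proposition \ref{intw0}, $B_\gamma u(t)=M_{sph,a}(\Delta_a f)(x,t)$. The ODE $(t^{2\gamma+1}u')'=t^{2\gamma+1}B_\gamma u$ can be integrated twice from $0$, which gives
$$u(t)-u(0)=\int_0^t\tau^{-2\gamma-1}\int_0^\tau \sigma^{2\gamma+1}M_{sph,a}(\Delta_af)(x,\sigma)\,d\sigma\,d\tau .$$

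Now take $L^p_a$ norms in $x$ with Minkowski. By \eqref{t1} and the definition \eqref{sphm}, $\|M_{sph,a}(g)(\cdot,\sigma)\|_{p,a}\le\|g\|_{p,a}$. Therefore
$$\|u(t)-u(0)\|_{p,a}\le \|\Delta_af\|_{p,a}\int_0^t\tau^{-2\gamma-1}\frac{\tau^{2\gamma+2}}{2\gamma+2}\,d\tau=\frac{t^2}{4(\gamma+1)}\|\Delta_af\|_{p,a}.$$
Here $2\gamma+2=d+|a|>0$, so the inner integral converges. It follows that
$$\|I_1\|_{p,a}\le c\,\|\Delta_a f\|_{p,a}\int_0^1 t^{1-s}\,dt,$$
which is finite because $s<2$.

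\textbf{Conclusion.} Combining the two parts gives
$$\|(-\Delta_a)^{\frac{s}{2}}f\|_{p,a}\le c\big(\|f\|_{p,a}+\|\Delta_af\|_{p,a}\big)\le c\|f\|_{W^{1,p}_{\Delta_a}},$$
where the last step uses equivalence of finite-dimensional norms.

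The main obstacle is the rigorous justification of the near part: the polar-coordinate reduction to the spherical mean, and the double integration of the Bessel ODE using Proposition \ref{intw0} together with the evenness of $u$ ($u'(0)=0$). For $f\in\mathcal{S}_e$ everything is smooth and rapidly decreasing, so Fubini and Minkowski apply. Alternatively, one could avoid the spherical mean and bound $T^y f-f$ by a second-order Taylor formula in each coordinate, which the product structure of $T^y$ allows.
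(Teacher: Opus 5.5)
Your proof is correct. The paper gives no argument of its own here; the estimate is quoted from \cite[Lemma 3.9]{ch1}, so there is no in-text proof to compare with. What you supply is a complete real-variable proof from the hypersingular form \eqref{fracl}, using only the contraction \eqref{t1} and the transmutation identity \eqref{intw}.

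The decisive step is the one you isolate. Since $u(t)=M_{sph,a}(f)(x,t)$ is even with $u'(0)=0$ and $2\gamma+1=d+|a|-1>0$, integrating $(t^{2\gamma+1}u')'=t^{2\gamma+1}M_{sph,a}(\Delta_af)(x,t)$ twice gives $\|M_{sph,a}(f)(\cdot,t)-f\|_{p,a}\le \frac{t^2}{4(\gamma+1)}\|\Delta_af\|_{p,a}$. There is no first-order term, and this is exactly what makes $\int_0^1 t^{-1-s}\|f-M_{sph,a}(f)(\cdot,t)\|_{p,a}\,dt$ finite for every $s<2$.

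A natural alternative is the spectral route through \eqref{fraclh}. It gives $p=2$ at once from Plancherel and $|\xi|^s\le 1+|\xi|^2$. For $p\neq 2$, however, it requires showing that $|\xi|^s(1+|\xi|^2)^{-1}$ is an $L^p_a$ Hankel multiplier. Your argument covers all $1\le p\le\infty$ at once, with a constant uniform in $p$. If you split at a radius $\delta$ instead of $1$ and optimize in $\delta$, the same two estimates give the stronger multiplicative bound $\|(-\Delta_a)^{\frac{s}{2}}f\|_{p,a}\le c\,\|f\|_{p,a}^{1-\frac{s}{2}}\|\Delta_af\|_{p,a}^{\frac{s}{2}}$.

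\textbf{Polar step for $1\le s<2$.} Smoothness of $f$ alone only gives $|T^yf(x)-f(x)|=O(|y|)$. That is not enough to justify Fubini in your polar-coordinate reduction. You can fix this in either of two ways:
\begin{itemize}
\item Use that $y\mapsto T^yf(x)$ is smooth and even in each $y_i$, so it equals $f(x)+O(|y|^2)$ with no linear terms.
\item Read \eqref{fracl} as $\lim_{\varepsilon\to0}\int_{|y|>\varepsilon}$. The polar step is then trivial on each shell $\varepsilon<|y|<1$, and your $t^2$ bound supplies the limit.
\end{itemize}

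\textbf{Dependence on $s$.} Your constant is of size $C(d,a,s)\,|S^d_{1+}|_a\bigl(\frac{2}{s}+\frac{1}{4(\gamma+1)(2-s)}\bigr)$, so it depends on $s$. The statement's $c=c(n,a,p)$ suppresses this. The constant is uniform in $s$ provided $C(d,a,s)=O(s(2-s))$, as is the case for the classical fractional Laplacian.
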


Below we deal with bandlimited functions, so we need the next function spaces.
Let us denote by $E_e$ the set of even entire functions. We also define the space
\begin{equation}M(\nu, p,a):=\{h\in L^p_a\cap E_e: \mathrm{supp}\hat{h}\subset B(0,\nu)\}.\end{equation}
In one dimension the following proposition is proven.

\begin{proposition}\label{ent}\cite[Theorems 2.1 and 2.2]{p} Let $\varphi \in \mathcal{S}_e(\mathbb{R_+})$, and let $f$ be in its dual space, i.e. $f \in \mathcal{S}'_e(\mathbb{R_+})$. Then we have the  following equivalences.
\begin{equation}\mathrm{supp}\varphi \in [0,\nu]\ws \Leftrightarrow \ws \hat{\varphi}\in E_e, \ws \forall \ws m\in\mathbb{N} \ws \exists \ws c_m, \ws |\hat{\varphi}(\xi)|\le\frac{c_m}{(1+|\xi|)^m}e^{\nu |\Im \xi|}, \ws \xi\in\mathbb{C}.\end{equation}
and
\begin{equation}\mathrm{supp}f \in [0,\nu]\ws \Leftrightarrow \ws \hat{f}\in E_e, \ws \exists \ws C,N, \ws |\hat{f}(\xi)|\le C(1+|\xi|)^N e^{\nu |\Im \xi|}, \ws \xi\in\mathbb{C}.\end{equation}
\end{proposition}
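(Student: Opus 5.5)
This is a Paley--Wiener type theorem for the one-dimensional Hankel transform $\mathcal{H}_\alpha$. I would prove it by reducing to the classical Fourier Paley--Wiener--Schwartz theorem via even extension, rather than redoing the contour-shift argument. The key identity is the Poisson-type integral representation
\[
j_\alpha(\xi x)=\frac{\Gamma(\alpha+1)}{\sqrt{\pi}\,\Gamma(\alpha+\frac12)}\int_{-1}^{1}(1-s^2)^{\alpha-\frac12}\cos(\xi x s)\,ds ,
\]
valid for $\alpha>-\frac12$. It gives the bound $|j_\alpha(\xi x)|\le e^{x|\Im \xi|}$ for complex $\xi$ and $x\ge 0$. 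Using it, the Hankel transform can be written as an Erdélyi--Kober (Sonine) type transform followed by a cosine transform.

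\emph{Forward directions.} Suppose $\mathrm{supp}\,\varphi\subset[0,\nu]$. Then $\hat\varphi(\xi)=\int_0^\nu\varphi(x)j_\alpha(x\xi)x^{2\alpha+1}dx$. By \eqref{B} this is an even entire function, and the bound above gives $|\hat\varphi(\xi)|\le c\,e^{\nu|\Im\xi|}$.

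To get the polynomial decay $(1+|\xi|)^{-m}$, I use the identity $\mathcal{H}_\alpha(-B_\alpha\varphi)(\xi)=\xi^2\hat\varphi(\xi)$. Here $\varphi\in\mathcal S_e$ with compact support, so $B_\alpha^k\varphi$ is again in $\mathcal S_e$ and supported in $[0,\nu]$. This yields $|\xi|^{2k}|\hat\varphi(\xi)|\le c_k e^{\nu|\Im\xi|}$, which is the claimed estimate.

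For a distribution $f$ with support in $[0,\nu]$, I set $\hat f(\xi)=\langle f, j_\alpha(\cdot\,\xi)\chi\rangle_\alpha$, where $\chi$ is a cutoff equal to $1$ near $[0,\nu]$. The function is entire and even, and finite order of $f$ gives $|\hat f(\xi)|\le C\sum_{k\le N}\sup_{x\le\nu+\varepsilon}|\partial_x^k j_\alpha(x\xi)|$. Applying \eqref{bder} repeatedly, each derivative costs a factor $(1+|\xi|)$ and the exponential factor is $e^{(\nu+\varepsilon)|\Im\xi|}$. Removing $\varepsilon$ requires the standard trick of choosing cutoffs adapted to $|\xi|$, exactly as in the Schwartz proof.

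\emph{Converse directions.} This is where I expect the main obstacle to lie. Given an even entire $\hat\varphi$ with the stated growth, I define $\varphi=\mathcal H_\alpha^{-1}\hat\varphi$ through \eqref{bi} and need to show $\varphi(x)=0$ for $x>\nu$. My plan is to pass to the even function $g(t)=\int_0^\infty\hat\varphi(\xi)\cos(t\xi)\,\xi^{2\alpha+1}d\xi$, i.e. the Fourier transform of the even function $\hat\varphi(\xi)|\xi|^{2\alpha+1}$.

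The difficulty is that $|\xi|^{2\alpha+1}$ is not entire. I would avoid it by working directly with the Poisson representation. Substituting it into \eqref{bi} gives
\[
\varphi(x)=c\int_{-1}^1(1-s^2)^{\alpha-\frac12}G(xs)\,ds, \qquad G(u)=\int_0^\infty\hat\varphi(\xi)\cos(u\xi)\,\xi^{2\alpha+1}d\xi .
\]
This reduces the claim to a Weyl/Sonine-type inversion, namely that $G$ determines $\varphi$ via an Abel-type operator that preserves support in $[0,\nu]$. For the actual vanishing I would instead use the classical route: write $\xi^{2\alpha+1}j_\alpha(x\xi)$ in terms of Hankel functions $H^{(1)}_\alpha$ and $H^{(2)}_\alpha$, and shift the contour into the upper or lower half-plane. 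For $x>\nu$ the factor $e^{-x\Im\xi}$ from $H^{(1)}$ beats $e^{\nu\Im\xi}$, and the decay $(1+|\xi|)^{-m}$ controls the arcs by a Phragmén--Lindelöf argument, so the integral vanishes.

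Finally, the smoothness and the membership $\varphi\in\mathcal S_e$ follow from the decay of $\hat\varphi$ on the real axis. The distributional converse follows by regularization: convolve $f$ with an approximate identity $\psi_\varepsilon\in\mathcal S_e$ supported in $[0,\varepsilon]$ using \eqref{CH}. The product $\hat f\hat\psi_\varepsilon$ satisfies the first set of hypotheses with $\nu+\varepsilon$, by \eqref{cs}-type compatibility. Support of $T^y$ translates (from \eqref{tra1}, $T^y$ shifts support by at most $y$) gives $\mathrm{supp}\, f*\psi_\varepsilon\subset[0,\nu+\varepsilon]$, and letting $\varepsilon\to0$ finishes the argument.
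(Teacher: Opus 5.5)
The paper gives no proof of this proposition. It is quoted from Platonov \cite[Theorems 2.1 and 2.2]{p}, so there is no in-paper argument to compare with, and your outline has to stand on its own. It essentially does. The forward directions are sound: the Poisson bound $|j_\alpha(x\xi)|\le e^{x|\Im\xi|}$, the identity $\xi^{2k}\hat\varphi(\xi)=\mathcal{H}_\alpha((-B_\alpha)^k\varphi)(\xi)$ continued to complex $\xi$, and the $|\xi|$-adapted cutoff for distributions. Together with the Hankel-function contour shift and regularization, they give a valid proof. Two steps, however, must be justified differently.

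First, the contour step works only because $\hat\varphi$ is even, and you never say how this is used. An integral over the half-line $[0,\infty)$ cannot simply be pushed into a half-plane, because of the endpoint $\xi=0$. Use $H^{(1)}_\alpha(ze^{i\pi})=-e^{-i\alpha\pi}H^{(2)}_\alpha(z)$ together with $\hat\varphi(-\xi)=\hat\varphi(\xi)$. This gives $2\int_0^\infty\hat\varphi(\xi)\xi^{\alpha+1}J_\alpha(x\xi)\,d\xi=\int_\Gamma\hat\varphi(\xi)\xi^{\alpha+1}H^{(1)}_\alpha(x\xi)\,d\xi$, with $\Gamma$ the real line indented above $0$. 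For $|z|\ge 1$, $\Im z\ge 0$ one has $|H^{(1)}_\alpha(z)|\le C|z|^{-1/2}e^{-\Im z}$, and your growth bound is assumed on all of $\mathbb{C}$. So for $x>\nu$ the large upper semicircle contributes $O(R^{\alpha+\frac32-m})$, and no Phragm\'en--Lindel\"of argument is needed. Alternatively, rotating the $H^{(1)}$ and $H^{(2)}$ halves separately to $\pm i\mathbb{R}_+$ produces two $K_\alpha$-integrals, which cancel exactly because $\hat\varphi(it)=\hat\varphi(-it)$.

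Second, in the distributional converse, $\mathrm{supp}(f*\psi_\varepsilon)\subset[0,\nu+\varepsilon]$ cannot be deduced from the fact that $T^y$ enlarges supports by at most $y$. That presupposes $\mathrm{supp}f\subset[0,\nu]$, which is the conclusion. It must come from the function case applied to $\hat f\hat\psi_\varepsilon$. This is legitimate because Cauchy estimates in the strip $|\Im\xi|\le 1$ make $\hat f|_{\mathbb{R}}$ a polynomially bounded smooth multiplier. Hence $f*\psi_\varepsilon=\mathcal{H}_\alpha^{-1}(\hat f\hat\psi_\varepsilon)\in\mathcal{S}_e$, and its entire extension obeys the first estimate with $\nu+\varepsilon$. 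The identity needed here is \eqref{CH} extended to $\mathcal{S}_e'\times\mathcal{S}_e$; \eqref{cs} plays no role.

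Drop your Poisson/$G$ detour rather than patch it. $G$ is the cosine transform of the non-entire function $\hat\varphi(\xi)|\xi|^{2\alpha+1}$, so it is generally not supported in $[-\nu,\nu]$. In any case $\varphi(x)$ is an average of $G$ over all of $[-x,x]$, so no support information passes from $G$ to $\varphi$.

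If you insert the Poisson formula into the \emph{forward} transform instead, you get $\hat\varphi(\xi)=\int_0^\infty A\varphi(t)\cos(\xi t)\,dt$. Here $A\varphi(t)=c_\alpha\int_t^\infty\varphi(x)(x^2-t^2)^{\alpha-\frac12}x\,dx$ is the Abel--Weyl operator. $A$ is a bijection of $\mathcal{S}_e$, and both $A$ and its inverse use only values on $[t,\infty)$. Hence $\mathrm{supp}\varphi\subset[0,\nu]\Leftrightarrow\mathrm{supp}A\varphi\subset[0,\nu]$, and both directions of the function case reduce to the classical Paley--Wiener theorem for the cosine transform.

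That route avoids Hankel-function asymptotics but needs the mapping properties of $A$ on $\mathcal{S}_e$; your contour argument is more self-contained.

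Minor points:
\begin{enumerate}
\item Via \eqref{bder}, each $x$-derivative of $j_\alpha(x\xi)$ costs a factor $|\xi|^2$, not $|\xi|$; you get $|\xi|$ by differentiating the Poisson integral. Either way the polynomial bound is unaffected.
\item $\varphi\in\mathcal{S}_e$ is a hypothesis of the statement, so it need not be derived from the decay of $\hat\varphi$.
\end{enumerate}
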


{\bf Remark.}
In view of Propositions \ref{ent} and \ref{hank} if $f\in L^1_a(\mathbb{R}^d_+)$ is continuous and of locally bounded variation and $\hat{f}$ is supported on the closure of the ball $B(0,\nu)$, then $M_{sph,a}f(r,0)=\mathcal{H}_a^{-1}(M_{sph,a}\hat{f}(\varrho,0))(r)$ is an entire function of exponential type $\nu$.

\subsection{Summation}

To state the main theorems, we also need to introduce some summation methods. Let $S=\sum_{k=0}^\infty$ be a formal series. The Ces\`aro means of $S$ are
\begin{equation}\label{ces}\sigma_n(S):= \frac{1}{n}\sum_{k=0}^{n-1}S_k,\end{equation}
where $S_k$ stands for the partial sums of $S$. We also define the de la Vall\'ee Poussin means $V_n$, of $S$ as follows.
\begin{equation}\label{del}V_n(S)=2\sigma_{2n(S)}-{\sigma}_n(S)=\frac{1}{n}\sum_{k=n}^{2n-1}S_k.\end{equation}
We also use the notation $\sigma_n(f)$ or $V_n(f)$ if $S$ is an expansion of $f$.

\medskip

Subsequently the following elementary observation will be useful. Let $n>m$ be integers, $p_m$ is any polynomial of degree $m$. Then
\begin{equation}\label{f-sig} f-\sigma_n(f)=f-p_m+\frac{m}{n}p_m-\frac{m+1}{m}\sigma_{m+1}(p_m)+\sigma_n(p_m-f).\end{equation}

\subsection{Chromatic derivative and expansion}

Now we define chromatic derivatives and expansions. Following the original chain of ideas, for a suitable $f$  and a polynomial $p$  we have
\begin{equation}\label{chro1}p((-\Delta_a)^{\frac{1}{2}})f(x)=\mathcal{H}_a^{-1}\left(p(|\xi|)\hat{f}(\xi)\right)(x)=c(d,a)\int_{\mathbb{R}^d_+}p(|\xi|)\hat{f}(\xi)\mathbb{j}_a(x,\xi)\xi^ad\xi.\end{equation}
Thus if $\{p_k\}$ are the orthonormal polynomial with respect to a weight $w$, the $k$th Bessel-chromatic derivative of $f$ with respect to a weight $w$ is
\begin{equation}\label{chro2}K_{B,w}^k(f)(x):=c(d,a)\int_{\mathbb{R}^d_+}p(|\xi|)\hat{f}(\xi)w(\xi)\mathbb{j}_a(x,\xi)\xi^ad\xi.\end{equation}
If $w\equiv 1$, we write $K_{B}^k(f)$.

With the Bessel-chromatic derivatives of an appropriate fixed function $\varphi$, we introduce the following sequence of functions.
$$\varphi_k:=K_{B}^k(\varphi), \ws \ws k=0,1,\dots .$$
The Bessel-chromatic expansion of $f$ with respect to $\varphi$ is
\begin{equation}\label{chro6} CE_{B,\varphi,w}(f)=\sum_{k=0}^\infty K_{B,w}^k(f)(0)K_{B}^k(\varphi).\end{equation}

\section{Chromatic expansion, weighted spherical mean at a general point}

The symmetry of the Bessel-Laplace operator ensures that on the right-hand side of \eqref{chro1} we get radial polynomials. Indeed,
\begin{equation}\label{elso}p_k((-\Delta_a)^{\frac{1}{2}})f=\int_{\mathbb{R}^d_+}\hat{f}(\xi)p_k(|\xi|)\mathbb{j}_a(x,\xi)\xi^adx.\end{equation}

This observation makes sense to take into consideration radial functions, or the spherical mean of a function.
To this we need the following modification of Proposition \ref{intw0}.

\medskip

\begin{lemma} Let $f\in W^{k+1,p}_{\Delta_a}$, $k\in\mathbb{N}$, $0<s<2$, and $\gamma$ be as above. Then we have
\begin{equation}\label{fBsphm}(-B_{\gamma})^{k+\frac{s}{2}}_t M_{sph,a}(f)(x,t)=M_{sph,a}((-\Delta_a)^{k+\frac{s}{2}}_xf)(x,t).\end{equation}
\end{lemma}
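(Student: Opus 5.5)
The plan is to prove \eqref{fBsphm} on the transform side, using Proposition \ref{t73} together with the radial Hankel transform \eqref{rh}. I will work first with $f\in\mathcal{S}_e$ and extend to $W^{k+1,p}_{\Delta_a}$ by density at the end.

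First I handle the integer part. Iterating Proposition \ref{intw0} $k$ times gives
\[(-B_\gamma)^k_t M_{sph,a}(f)(x,t)=M_{sph,a}((-\Delta_a)^k_x f)(x,t).\]
Since $(-\Delta_a)^{k+\frac{s}{2}}=(-\Delta_a)^{\frac{s}{2}}(-\Delta_a)^k$, it then suffices to prove the case $k=0$ with $f$ replaced by $g=(-\Delta_a)^kf$. One has to check that the fractional power $(-B_\gamma)^{\frac{s}{2}}$ composes with $(-B_\gamma)^k$ in the same order, which is immediate on the Hankel side.

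For $k=0$, fix $x$ and write $u(t)=M_{sph,a}(g)(x,t)$. By Proposition \ref{t73},
\[u(t)=c\int_{\mathbb{R}^d_+}j_\gamma(t|\xi|)\hat g(\xi)\mathbb{j}_a(x,\xi)\xi^a d\xi.\]
Passing to polar coordinates $\xi=\varrho\psi$, this becomes
\[u(t)=c\int_0^\infty j_\gamma(t\varrho)\,G_x(\varrho)\,\varrho^{2\gamma+1}d\varrho,\qquad G_x(\varrho)=\int_{S^d_{1+}}\hat g(\varrho\psi)\mathbb{j}_a(x,\varrho\psi)\psi^adS(\psi).\]
Since $2\gamma+1=d+|a|-1$, this exhibits $u$ as a one-dimensional inverse Hankel transform $\mathcal{H}_\gamma^{-1}$ of $G_x$, up to the constants from \eqref{bi} and \eqref{FS}. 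By \eqref{fraclh} in dimension one, $(-B_\gamma)^{\frac{s}{2}}u$ is obtained by inserting the factor $\varrho^s$ under the integral. On the other hand, $|\xi|^s=\varrho^s$ and \eqref{fraclh} in dimension $d$ give $\mathcal{H}_a((-\Delta_a)^{\frac{s}{2}}g)=|\xi|^s\hat g$. Applying Proposition \ref{t73} to $(-\Delta_a)^{\frac{s}{2}}g$ then produces exactly the same integral. Hence the two sides of \eqref{fBsphm} coincide.

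The main obstacle is justification rather than computation. Here $(-\Delta_a)^{\frac{s}{2}}g\notin\mathcal{S}_e$ in general, so Proposition \ref{t73} cannot be applied to it directly, and $u$ must lie in a class where the one-dimensional definition \eqref{fraclh} applies. I would handle both points with the singular-integral formula \eqref{fracl} or the distributional definition in $\mathcal{S}_e'$. Because $|\xi|^s\hat g$ is integrable against the bounded kernel $\mathbb{j}_a$ (using \eqref{j1}), the integrals converge absolutely and Fubini is justified. The extension to $f\in W^{k+1,p}_{\Delta_a}$ then follows by approximating with $f_n\in\mathcal{S}_e$. Proposition \ref{gammadelta} gives $(-\Delta_a)^{k+\frac{s}{2}}f_n\to(-\Delta_a)^{k+\frac{s}{2}}f$ in $L^p_a$. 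The spherical mean is $L^p_a$-bounded, since it is an average of the translations bounded in \eqref{t1}, so the right-hand sides converge. The left-hand sides converge in $\mathcal{S}_e'$, and the identity passes to the limit.
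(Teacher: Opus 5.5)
Your proposal is correct and follows essentially the same route as the paper: reduce to $k=0$ by iterating Proposition \ref{intw0}, verify the identity for Schwartz functions on the Hankel side, where both sides become $\varrho^s j_\gamma(t\varrho)\hat f(\xi)$, and extend to $W^{k+1,p}_{\Delta_a}$ by density using Proposition \ref{gammadelta}. The only variation is that you read the identity as a one-dimensional multiplier statement in $t$ (polar coordinates plus \eqref{fraclh} for $B_\gamma$), whereas the paper transforms in $x$ and uses the eigenfunction relation $(-B_{\gamma})^{\frac{s}{2}}_t j_\gamma(t\varrho)=\varrho^s j_\gamma(t\varrho)$; your version avoids commuting the nonlocal $t$-operator with $\mathcal{H}_a$, and your Fubini remark makes explicit the use of \eqref{Hsphm} for $(-\Delta_a)^{\frac{s}{2}}g\notin\mathcal{S}_e$, which the paper leaves tacit.
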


\proof
Iteration and Proposition \ref{intw0} imply that it is enough to prove for $k=0$. First let $f\in\mathcal{S}_e$.
The inversion formula for Hankel transform is obviously valid, thus it is enough to show the equality of the Hankel transform at $x$ of both sides.
In view of \eqref{Hsphm}, recalling that $|\xi|=\varrho$, we have
$$\mathcal{H}_a(M_{sph,a}((-\Delta_a)^{\frac{s}{2}}_xf)(\cdot,t))(\xi)=\varrho^s\hat{f}(\xi)j_\gamma(t\varrho).$$
On the other hand, taking into consideration that\\ $(-B_{\gamma})^{\frac{s}{2}}_tj_\gamma(t\varrho)=\varrho^s(\xi)j_\gamma(t\varrho)$, cf. \cite[(3.4)]{bg}, and \eqref{Hsphm} we have
$$\mathcal{H}_a((-B_{\gamma})^{\frac{s}{2}}_t) M_{sph,a}(f)(\cdot,t))(\xi)=(-B_{\gamma})^{\frac{s}{2}}_t\mathcal{H}_a(M_{sph,a}(f)(\cdot,t))(\xi)$$ $$=(-B_{\gamma})^{\frac{s}{2}}_tj_\gamma(t\varrho)\hat{f}(\xi)=\varrho^sj_\gamma(t\varrho)\hat{f}(\xi).$$
Then we take into consideration that $\mathcal{S}_e$ is dense in $W^{1,p}_{\Delta_a}$, see \cite[Theorem 2]{ch} and by Proposition \ref{gammadelta} the operators are bounded in $W^{1,p}_{\Delta_a}$-norm. Thus we can extend the result to $W^{1,p}_{\Delta_a}$.

\medskip

\noindent {\bf Remark.}

Taking the Hankel transform of both sides, it can be seen that similarly to \eqref{Bcser} for $f\in \mathcal{S}_e'$ translation and the fractional Bessel-Laplacian commute, i.e.
$$T^t((-\Delta_a)^{\frac{s}{2}}f)=(-\Delta_a)^{\frac{s}{2}}(T^tf).$$
Since for a regular distribution $f$ (a distribution which is defined by a locally integrable function) by interchanging the integrals we immediately get that for any $g\in \mathcal{S}_e$, $\langle M_{sph,a}(f),g\rangle_a=\langle f, M_{sph,a}(g)\rangle_a$, as usual, we define the spherical mean of an $f\in \mathcal{S}_e'$ such that for any $g\in \mathcal{S}_e$
$$\langle M_{sph,a}(f),g\rangle_a=\langle f, M_{sph,a}(g)\rangle_a.$$
Thus, with the notation above, by Parseval's formula and by \eqref{Hsphm} we have
$$\langle M_{sph,a}(f)(\cdot,t),g\rangle_a=\langle f, \mathcal{H}_a^{-1}\left(j_{\gamma}(t|\cdot|)\hat{g}(\cdot)\right)\rangle_a=\langle \hat{f}j_{\gamma}(t|\cdot|),\hat{g}\rangle_a$$ $$=\langle \mathcal{H}_a^{-1}\left(\hat{f}(\cdot)j_{\gamma}(t|\cdot|)\right),g\rangle_a.$$
Thus \eqref{Hsphm} is valid for $f\in \mathcal{S}_e'$ as well, and from the first part of the proof it can be readily seen that equation \eqref{fBsphm} is valid in distribution sense for any $f\in \mathcal{S}_e'$ too.

\medskip

Now  we are in position to define the weighted spherical Bessel chromatic expansion at a general point.

Let $w$ be a weight function, i.e. a nonnegative function with finite moments on $[0,\nu)$ with $0\le\nu\le \infty$. Let $y=t\vartheta$, $\gamma$ be as above. Denoting by $\Phi_x(t):=M_{sph,a}(f)(x,t)$, in view of\eqref{fBsphm} with $\check{w}(|y|):=\mathcal{H}_a^{-1}(w(|\eta|)$ in distribution sense we have
\begin{equation}\label{sphx}p((-B_{\gamma})^{\frac{1}{2}}_t)(\Phi_x(\cdot)*_\gamma \check{w})(t)$$ $$=c(\gamma)\int_0^\infty p(\tau)\widehat{\Phi_x(\cdot)}(\tau)w(\tau)j_{\gamma}(t\tau)\tau^{2\gamma+1}d\tau.\end{equation}
Let $\{p_k(w_1)\}_{k=0}^\infty$ be the ONP on $[0,\nu)$ with respect to $w_1(\tau)=w(\tau)\tau^{2\gamma+1}$. Then
$$p_k((-B_{\gamma})^{\frac{1}{2}}_t)(\Phi_x(\cdot)*_\gamma\check{w})(0)=c_k(\widehat{\Phi_x(\cdot)},w_1)=:c_k(x),$$
i.e. the Fourier coefficients of $\widehat{\Phi_x(\cdot)}$ with respect to $\{p_k\}=\{p_k(w_1)\}$. Introduce the notation
\begin{equation}\label{fs1} \varphi(t):=c(\gamma)\int_0^\infty w(\tau)j_{\gamma}(t\tau)\tau^{2\gamma+1}d\tau,\end{equation}
\begin{equation}\label{fs2}\varphi_k(t):=c(\gamma)\int_0^\infty p_k(\tau)w(\tau)j_{\gamma}(t\tau)\tau^{2\gamma+1}d\tau=K^k_{B}(\varphi)(t),\end{equation}
and
\begin{equation}c_k(x)=K^k_{B,w}(\Phi_x(\cdot))(0).\end{equation}
Thus, we get the following spherical Bessel-chromatic expansion.
\begin{equation}\label{chm}SCE_{B,w}(f)(t):= \sum_{k=0}^\infty c_k(x)K^k_{B}(\varphi)(t)=\sum_{k=0}^\infty K^k_{B,w}(\Phi_x(\cdot))(0)  K^k_{B}(\varphi)(t).\end{equation}
Supposing that $f$ fulfils the conditions of the inversion formula and $f\in L^2_a$,
$$SCE_{B,w}(f)(t)=(\Phi_x(\cdot)*_\gamma \check{w} )(t)$$
in $L^2_{\gamma}(\mathbb{R}_+)$ sense. If $f, \varphi \in \mathcal{S}_e$, the chromatic derivatives can be ment not only in distributional but also in ordinary sense.

Of course, it is worth taking weights which Hankel transform is known.  Below we give some examples.

\subsubsection{With Laguerre weight} At first we give the definition in distribution sense. In view of \eqref{HF}, we take $w(\tau)=e^{-\tau^2}$ on the whole semiaxis. That is we have
$$p((-B_{\gamma})^{\frac{1}{2}}_t)(\Phi_x(\cdot)*_\gamma e^{-\frac{(\cdot)^2}{4}})(t)$$ $$=\frac{1}{2\Gamma\left(\gamma+1\right)}\int_0^\infty p(\tau)\widehat{\Phi_x(\cdot)}(\tau)e^{-\tau^2}j_{\gamma}(t\tau)\tau^{2\gamma+1}d\tau,$$
 cf. \eqref{HF} and \eqref{bi}. Let $\{p_k\}_{k=0}^\infty$ be the ONP on $(0,\infty)$ with respect to the general Laguerre weight, $w_1(\tau)=e^{-\tau^2}\tau^{2\gamma+1}$. Now we have
\begin{equation}\label{fs1} \varphi(t):=\frac{2}{\Gamma(\gamma+1)}\int_0^\infty e^{-\tau^2}j_{\gamma}(t\tau)\tau^{2\gamma+1}d\tau = e^{-\frac{t^2}{4}},\end{equation}
\begin{equation}\label{fs2}\varphi_k(t):=\frac{1}{2^{\gamma-1}\Gamma(\gamma+1)\Gamma\left(\frac{\gamma+1}{2}\right)}\int_0^\infty p_k(\tau)e^{-\tau^2}j_{\gamma}(t\tau)\tau^{2\gamma+1}d\tau=K^k_B\left(e^{-\frac{t^2}{4}}\right).\end{equation}
Notice that since $e^{-\frac{t^2}{4}}$ is in $\mathcal{S}_e$, $\varphi_k$ is its $k$th Bessel-chromatic derivative in ordinary sense as well. So we have
\begin{equation}\label{chml}SCE_{B,w}(f)(t):= \sum_{k=0}^\infty K^k_B(M_{sph,a}(f)(x,\cdot))(0)  K^k_B\left(e^{-\frac{t^2}{4}}\right).\end{equation}

\medskip

After this definition of chromatic expansion we give a convergence theorem.

\medskip

\begin{theorem}\label{con3} Assume that $f\in L^1_a$, is continuous and of locally bounded variation. If $\left\|\partial_\tau \widehat{M_{sph,a}(f)_x}(\tau)\tau^{\gamma+1}e^{-\frac{\tau^2}{2}}\right\|_\infty <\infty$, then

\begin{equation}\label{c3}\left|\left(M_{sph,a}(f(x,\cdot)*_\gamma e^{-\frac{(\cdot)^2}{4}}\right)(t)-\sigma_n(SCE_{B,w}(f))(t)\right|=O(n^{-\frac{7}{32}}).\end{equation}
\end{theorem}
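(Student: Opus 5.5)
Write $g(\tau):=\widehat{\Phi_x}(\tau)$, where $\Phi_x(t)=M_{sph,a}(f)(x,t)$, and use the Laguerre-type weight $w_1(\tau)=e^{-\tau^2}\tau^{2\gamma+1}$ on $(0,\infty)$. The plan is to move the pointwise error to the spectral side and reduce it to a weighted polynomial approximation problem.

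\emph{Transfer to the image side.} The hypotheses on $f$ (in $L^1_a$, continuous, of locally bounded variation) let us use the inversion formula of Proposition \ref{hank}. Together with \eqref{sphx}, this writes the difference in \eqref{c3} as
\[
c(\gamma)\int_0^\infty \bigl(g(\tau)-\sigma_n(g)(\tau)\bigr)\,j_\gamma(t\tau)\,w_1(\tau)\,d\tau ,
\]
where $\sigma_n(g)$ is the Ces\`aro mean of the orthonormal expansion $\sum_k c_k(g,w_1)p_k$. This uses \eqref{fs2} and the fact that term-by-term inverse transform of the Ces\`aro means reproduces $\sigma_n(SCE_{B,w}(f))$. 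Since $|j_\gamma|\le 1$ by \eqref{j1}, Cauchy--Schwarz gives
\[
\bigl|\text{error}\bigr|\le c\,\|g-\sigma_n(g)\|_{L^2(w_1)}\Bigl(\int_0^\infty w_1\Bigr)^{1/2}.
\]
The next step is the substitution $u=\tau^2$. The weight then becomes the generalized Laguerre weight $e^{-u}u^{\gamma}$, and the $p_k$ are related to Laguerre polynomials in $u$ of degree $\lfloor k/2\rfloor$, up to the odd part. Alternatively, I would extend evenly or oddly to $\mathbb{R}$ and work with the Freud-type weight $e^{-\tau^2}|\tau|^{2\gamma+1}$.

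\emph{Reduction to best weighted approximation.} Using \eqref{f-sig} with $m\approx n^{\beta}$ and a near-best polynomial $p_m$, the error splits into three pieces:
\begin{itemize}
\item $\|g-p_m\|$;
\item $\frac{m}{n}\|p_m\|$ and $\frac{m+1}{n}\|\sigma_{m+1}(p_m)\|$, which are bounded by $c\,m/n$ since $\|p_m\|\lesssim \|g\|+E_m$;
\item $\|\sigma_n(p_m-g)\|\le \|p_m-g\|$, because Ces\`aro means are contractions in $L^2(w_1)$.
\end{itemize}
So it suffices to bound $E_m(g)_{w_1}$, the best $L^2(w_1)$ approximation by degree-$m$ polynomials, and then optimize over $m$.

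\emph{Bounding $E_m$ (main obstacle).} The assumption $\|g'(\tau)\tau^{\gamma+1}e^{-\tau^2/2}\|_\infty<\infty$ is a weighted first-derivative bound. A Jackson-type inequality for Freud/Laguerre weights gives $E_m(g)_{w}\lesssim m^{-1/2}\|g'w\|$, where $w=e^{-\tau^2/2}$ and $m^{-1/2}$ is the Mhaskar--Rakhmanov--Saff scaling. The difficulty is that the derivative weight carries the factor $\tau^{\gamma+1}$ rather than $\tau^{\gamma+1/2}$, which produces a singularity at $0$. It also means the $L^\infty$ bound only controls an $L^2$ modulus after truncation.

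I would handle this as follows. Cut at $\tau=\delta$ near the origin and at $\tau=R\sim\sqrt{m}$ at infinity. Use the tail decay of the Gaussian beyond the MRS number. Approximate near $0$ by a constant, losing a power of $\delta$. Then balance $\delta$, $R$ and $m=n^{\beta}$ against the $m/n$ term.

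I expect the resulting nonoptimal balancing of these losses to produce an exponent such as $\tfrac{7}{32}$. If needed, I would take $E_m\lesssim m^{-\theta}$ with some $\theta$ obtained from the truncation argument and choose $\beta=1/(1+\theta)$, which gives rate $n^{-\theta/(1+\theta)}$. With $\theta=7/25$ this equals $7/32$, so the hardest step is to establish a weighted Jackson estimate with that specific exponent.
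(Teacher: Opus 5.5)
Your reduction is sound as far as it goes. By the inversion formula and $|j_\gamma|\le 1$, Cauchy--Schwarz bounds the error by $c\,\|g-\sigma_n(g)\|_{L^2(w_1)}$. In $L^2(w_1)$ the splitting \eqref{f-sig} and the contractivity of $\sigma_n$ then reduce everything to the best approximation $E_m(g)_{w,2}$, where $w(\tau)=\tau^{\gamma+1/2}e^{-\tau^2/2}$ and $w^2=w_1$. This is a legitimate route, and it differs from the paper's, which works in a weighted sup norm.

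The gap is that the decisive step, a quantitative bound for $E_m(g)_{w,2}$, is never proved. The truncation scheme is only sketched, and $\theta=7/25$ is chosen precisely so that $\theta/(1+\theta)=7/32$; that fits the answer rather than deriving it. The sketch also misreads the hypothesis. With $\varphi(\tau)=\sqrt{\tau}$ as in Lemma \ref{ms}, one has $\tau^{\gamma+1}e^{-\tau^2/2}=\varphi(\tau)w(\tau)$. So the assumption is exactly $\|g'\varphi w\|_\infty<\infty$, the natural quantity in the Mastroianni--Szabados Jackson inequality. There is no singularity at $0$ to cut away. The relevant estimate is the half-line one, with rate $\sqrt{a_m}/m\sim m^{-3/4}$, not the full-line Freud rate $m^{-1/2}$.

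In the paper, $7/32$ is not a Jackson exponent at all. It comes from combining $E_m(g)_{w,\infty}\lesssim m^{-3/4}$ with the growth $\sqrt{a_n}\sim n^{1/4}$ of the weighted sup norm of $\sigma_n$ (Lemma \ref{sikor}, which rests on the Christoffel bounds of Lemma \ref{lelu}). One then balances $n^{1/4}m^{-3/4}$ against $m^{5/4}/n$ with $m=n^{5/8}$.

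Your side remark on $u=\tau^2$ is also wrong. The $p_k$ are orthonormal polynomials in $\tau$, odd powers included, on $(0,\infty)$ with respect to $w_1$. Laguerre polynomials in $\tau^2$, or an even or odd extension to $\mathbb{R}$, give a different system and hence different Ces\`aro means from the ones in the theorem.

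Your truncation at $R\sim\sqrt{m}$ is in fact all that is needed to close the gap, and it gives more than the theorem claims. Put $\tilde g=g$ on $[0,R]$ and $\tilde g\equiv g(R)$ on $[R,\infty)$, with $R=c\sqrt{m}$. Then $\|\tilde g'\varphi w\|_2\le \|g'\varphi w\|_\infty\sqrt{R}$, so Lemma \ref{ms} with $p=2$ gives $E_m(\tilde g)_{w,2}\lesssim m^{-3/4}m^{1/4}=m^{-1/2}$. Meanwhile $\|(g-\tilde g)w\|_2\le 2\|g\|_\infty\bigl(\int_R^\infty w_1\bigr)^{1/2}$ is exponentially small, because $g$ is bounded ($\Phi_x\in L^1_{2\gamma+1}$).

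Hence $\theta=1/2$, and your splitting yields $O(n^{-1/3})$. More directly, the partial sums $S_k$ are orthogonal projections, so $\|g-S_k(g)\|_{L^2(w_1)}\le E_{k-1}(g)_{w,2}$. Averaging gives $\|g-\sigma_n(g)\|_{L^2(w_1)}\le \frac1n\sum_{k<n}E_{k-1}(g)_{w,2}=O(n^{-1/2})$. Either bound is stronger than $O(n^{-7/32})$ and avoids the Christoffel-function and Lebesgue-constant machinery. The paper's sup-norm argument, in exchange, gives the weighted uniform estimate of Lemma \ref{dbecs} for the spectral function itself. As written, however, your proposal does not contain this estimate, so it does not prove the theorem.
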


\medskip

For the proof we need some lemmas.

Let $I=[0,\infty)$, $\alpha>-\frac{1}{2}$, $\beta>\frac{1}{2}$; $w(x)=w(\alpha,\beta,x)=x^{\alpha}e^{-\frac{x^\beta}{2}}$, $\tilde{w}(x)=e^{-\frac{x^\beta}{2}}$.\\  The orthonormal polynomial system with respect to $w^2$ is denoted by $\{p_n(w^2, \cdot)\}_{n=0}^\infty$.
Let $a_n=a_n(w^2)$ be the Mhaskar-Rakhmanov-Saff (M-R-S) number (see e.g. \cite{lelu} and the references therein), which shows where does the norm of a weighted polynomial live. To formulate the following lemma we need a further function as follows.
$$\Psi_n(x)=\left\{\begin{array}{ll} \frac{\sqrt{x+a_nn^{-2}}(a_{2n}-x)}{n\sqrt{a_n-x+a_n\eta_n}}, \ws x\in [0,a_n]\\
\psi_n(a_n), \ws x>a_n.\end{array}\right.$$

To state the following lemma we define $n$th Christoffel function.
$$\lambda_n(w^2,x):=\inf_{p\in \Pi_n}\frac{\int_I p^2(t)w^2(t)dt}{p^2(x)}=\frac{1}{\sum_{k=0}^{n-1}p_k^2(w^2,x)}.$$

\begin{lemma}\label{lelu}\cite[Theorem 1.3 (b), Example 1]{lelu} With the above notation we have that there exists a $C>0$ such that uniformly for $n\ge 1$ and $x\in [0,\infty)$
$$\lambda_n(w^2,x)\ge c\Psi_n(x)\tilde{w}^2(x)\left(x+\frac{a_n}{n^2}\right)^{2\alpha},$$
and $\eta_n=c(\beta) n^{-\frac{2}{3}}$, $a_n=c(\beta)n^{\frac{1}{\beta}}$.
\end{lemma}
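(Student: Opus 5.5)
The lemma is quoted from \cite{lelu}, so the plan is to check that our weight satisfies the hypotheses there and to read off the stated quantities, not to reprove the Christoffel estimate. I would write the weight as $w^2(x)=x^{2\alpha}e^{-Q(x)}$ on $I=[0,\infty)$ with $Q(x)=x^\beta$. In the framework of \cite{lelu} this is a Laguerre-type exponential weight: a Jacobi-type factor at the finite endpoint $0$ times $e^{-Q}$ with $Q$ convex and increasing. The first step is to verify the regularity assumptions of \cite[Theorem 1.3]{lelu}. These concern smoothness of $Q$ on $(0,\infty)$, the quasi-monotonicity of $T(x)=xQ'(x)/Q(x)$, and the admissibility range of the exponent $2\alpha$ of the factor at $0$. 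Here $T\equiv\beta$, and the restriction $\beta>\frac12$ is exactly what places $Q$ in the admissible class. This is \cite[Example 1]{lelu}, so the hypotheses hold for $\alpha>-\frac12$.

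Next I would compute the Mhaskar--Rakhmanov--Saff number. On $[0,\infty)$ it is defined by the equation
$$n=\frac{1}{\pi}\int_0^1\frac{a_ntQ'(a_nt)}{\sqrt{t(1-t)}}\,dt.$$
Since $Q'(x)=\beta x^{\beta-1}$, the right-hand side equals $a_n^\beta$ times a Beta-integral depending only on $\beta$. Hence $a_n=c(\beta)n^{1/\beta}$.

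The Christoffel bound in \cite[Theorem 1.3 (b)]{lelu} uses the scaling $\eta_n=(nT(a_n))^{-2/3}$ near the soft edge $a_n$. With $T\equiv\beta$ this gives $\eta_n=c(\beta)n^{-2/3}$.

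Then I would substitute these quantities into the general lower bound
$$\lambda_n(w^2,x)\ge c\,\Psi_n(x)\,e^{-Q(x)}\Bigl(x+\frac{a_n}{n^2}\Bigr)^{2\alpha}.$$
Here $\Psi_n$ is the function defined before the lemma. On $[0,a_n]$ it is built from the hard-edge factor $\sqrt{x+a_nn^{-2}}$ at $0$, the soft-edge factor $\sqrt{a_n-x+a_n\eta_n}$, and the factor $(a_{2n}-x)/n$. For $x>a_n$ it is frozen at its value at $a_n$; the expression ``$\psi_n(a_n)$'' in the displayed definition should be read as $\Psi_n(a_n)$. Finally, $e^{-Q(x)}=\tilde w^2(x)$, which gives exactly the displayed inequality, uniformly in $n\ge1$ and $x\ge0$.

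The only step needing real care is matching normalizations. Our orthonormality weight is $w^2=x^{2\alpha}e^{-x^\beta}$, so one must make sure that the Jacobi exponent used in \cite{lelu} is $2\alpha$, not $\alpha$. One must also check that the definitions of $a_n$ and $\eta_n$ there agree with the ones above up to constants depending only on $\beta$. Such constants are harmless, since they are absorbed into $c$ and $c(\beta)$.
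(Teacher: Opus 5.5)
Your proposal is correct and follows the paper's own route: the paper gives no argument beyond citing \cite[Theorem 1.3 (b), Example 1]{lelu}, and you do the same, checking that $Q(x)=x^\beta$ with $\beta>\frac12$ is admissible, reading off $a_n=c(\beta)n^{1/\beta}$ and $\eta_n=c(\beta)n^{-2/3}$ from $T\equiv\beta$, and correctly reading $\psi_n(a_n)$ as $\Psi_n(a_n)$. One small precision: if I recall \cite{lelu} correctly, the weight there is written as $x^{2\rho}e^{-2Q(x)}$ and $T(x)=1+xQ''(x)/Q'(x)$ rather than $xQ'(x)/Q(x)$, but for a pure power this is still $\equiv\beta$, and the factor $2$ in $Q$ only changes constants depending on $\beta$, as you anticipated.
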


\medskip

We continue with a general lemma.

\begin{lemma}\label{gences} Let $w$ be a weight function on an real interval $I$, $fw\in L^\infty(I)\cap L^2(I)$ and $\phi \in L^\infty(I)$ arbitrary. Then
$$|\sigma_n(f,w^2)(x)w(x)\phi(x)|\le c \|fw\|_\infty \left(1+\frac{1}{n}\sup_{k=1}^n\frac{\gamma_{k-1}}{\gamma_k}\frac{w^2(x)\phi(x)}{\lambda_n(w^2,x)}\right).$$
\end{lemma}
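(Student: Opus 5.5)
We write $\sigma_n(f,w^2)$ as a Fejér-type kernel operator and bound the kernel pointwise by Cauchy--Schwarz, so that the Christoffel function appears. Set $K_k(x,t)=\sum_{j=0}^{k-1}p_j(w^2,x)p_j(w^2,t)$. Then
$$\sigma_n(f,w^2)(x)=\int_I f(t)F_n(x,t)w^2(t)\,dt,\qquad F_n(x,t)=\frac1n\sum_{k=1}^{n}K_k(x,t)$$
(up to the harmless index shift in \eqref{ces}). Multiplying by $w(x)\phi(x)$ and pulling out $\|fw\|_\infty$, we get
$$|\sigma_n(f,w^2)(x)w(x)\phi(x)|\le \|fw\|_\infty\,|w(x)\phi(x)|\int_I |F_n(x,t)|\,w(t)\,dt .$$
So the whole task is to bound the weighted $L^1$ norm of the Fejér kernel, $\int_I|F_n(x,t)|w(t)\,dt$.

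\textbf{Step 1: Christoffel--Darboux.} Apply the Christoffel--Darboux formula
$$K_k(x,t)=\frac{\gamma_{k-1}}{\gamma_k}\,\frac{p_k(x)p_{k-1}(t)-p_{k-1}(x)p_k(t)}{x-t}$$
(with $\gamma_k$ the leading coefficients). Then sum the Fejér average by Abel summation. The aim is to split $F_n$ into two parts:
\begin{itemize}
\item a part bounded by the positive Fejér-type kernel, whose weighted integral is $O(1)$; this gives the constant term $1$ in the statement;
\item a remainder controlled by $\frac1n\sup_{k\le n}\frac{\gamma_{k-1}}{\gamma_k}$ times quantities of the form $|p_j(x)p_l(t)|$.
\end{itemize}

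\textbf{Step 2: Cauchy--Schwarz.} For each $k\le n$, Cauchy--Schwarz gives
$$\int_I |p_j(x)p_l(t)|\,w(t)\,dt\le |p_j(x)|\Big(\int_I p_l^2w^2\Big)^{1/2}|I_w|^{1/2}.$$
Since the $p_l$ are orthonormal, the middle factor equals $1$. More efficiently, one can use $\sum_{j<n}p_j^2(x)=\lambda_n(w^2,x)^{-1}$. Combined with the prefactor $w(x)\phi(x)$, this produces
$$\frac1n\sup_{k}\frac{\gamma_{k-1}}{\gamma_k}\,\frac{w^2(x)\phi(x)}{\lambda_n(w^2,x)} .$$
In this form, $w(x)$ pairs with one factor $p_j(x)$ and the $t$-integral is controlled by $\|fw\|_\infty$ together with $L^2$ orthonormality.

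\textbf{Main obstacle.} The hardest part is handling the $t$-integral near the diagonal $t=x$, where the Christoffel--Darboux quotient is singular. The plan is to treat a neighbourhood of the diagonal separately:
\begin{itemize}
\item near $t=x$, use $|K_k(x,t)|\le \lambda_k(x)^{-1/2}\lambda_k(t)^{-1/2}$, which follows from Cauchy--Schwarz in the sum;
\item away from the diagonal, use the Christoffel--Darboux form and bound $\frac{1}{|x-t|}$ crudely.
\end{itemize}
Both pieces are absorbed into $\|fw\|_\infty$ times the quantity above.

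If this splitting proves too delicate, the fallback is a cruder estimate, which is still consistent with the stated bound since the constant $c$ and the supremum are generous. Write $\sigma_n=S_n-\frac1n\sum_k kK_{k+1}\!-\!kK_k$-type differences, so that $\sigma_n f$ is expressed through the differences $S_n f-\frac1n\sum_{k}k\,c_kp_k$. Bound $\sum_{k}|c_k|\,|p_k(x)|$ by Cauchy--Schwarz and Bessel's inequality, $\sum_k c_k^2\le\|fw\|_2^2$.
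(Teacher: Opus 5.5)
\textbf{Verdict: there is a genuine gap.} The steps meant to produce the two terms of the bound are either unjustified or lose powers of $n$.

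\emph{The far-diagonal estimate.} In Step 2 you discard (or ``crudely'' bound) the factor $1/(x-t)$ and then bound $\int_I|p_l(t)|w(t)\,dt$ by $|I_w|^{1/2}$. If this is the length of $I$, it is infinite for $I=[0,\infty)$, which is exactly the case where the lemma is used (Lemma \ref{sikor}). In any case this bound does not depend on $x$, so it cannot produce the second factor $w(x)$ in $w^2(x)/\lambda_n(w^2,x)$; you only assert that it appears. Suppose you keep $1/(x-t)$ and use $\int_{|x-t|>d}|p_l(t)|w(t)|x-t|^{-1}dt\le\sqrt{2/d}$. Estimating the Christoffel--Darboux terms one $k$ at a time still leaves $\frac1n\sum_{k<n}|p_k(x)|\le n^{-1/2}\lambda_n(w^2,x)^{-1/2}$, which is a loss of $\sqrt n$ against the claim.

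\emph{The ``positive Fej\'er-type kernel'' and the near-diagonal estimate.} For a general weight the $(C,1)$ kernel $\frac1n\sum_kK_k(x,t)$ is not positive, and you do not identify any positive kernel dominating your first piece. So the $O(1)$ term is unproved. The near-diagonal bound $|K_k(x,t)|\le K_k(x,x)^{1/2}K_k(t,t)^{1/2}$ would need $K_k(t,t)w^2(t)\le cK_k(x,x)w^2(x)$ for $t$ near $x$, which is not a hypothesis. Integrating it by Cauchy--Schwarz instead costs $\sqrt k$. You also never fix the size of the near-diagonal region, and that choice is what balances the two pieces.

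\emph{The fallback.} It only gives $|\sigma_n(f,w^2)(x)w(x)|\le\|fw\|_2\,w(x)\lambda_n(w^2,x)^{-1/2}$. This involves $\|fw\|_2$, which $\|fw\|_\infty$ does not control on an infinite interval. Even for a Jacobi-type weight on $[-1,1]$, in the bulk it is of order $\sqrt n$ while the claimed right-hand side is $O(1)$. So it does not imply the lemma.

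\textbf{What is missing.} You need the right splitting radius and an orthogonality argument applied to the whole far part at once. The paper takes $d(n,x)=\lambda_n(w^2,x)/w^2(x)=(K_n(x,x)w^2(x))^{-1}$ and splits $S_k=S_{k,1}+S_{k,2}$ according to whether $|x-t|<d(n,x)$.
\begin{itemize}
\item \emph{Near part.} Cauchy--Schwarz against the length of the interval, together with the reproducing identity $\int K_k(x,t)^2w^2(t)\,dt=K_k(x,x)\le K_n(x,x)$, gives $|S_{k,1}(x)|w(x)\le c\|fw\|_\infty\sqrt{d(n,x)K_n(x,x)}\,w(x)=c\|fw\|_\infty$.
\item \emph{Far part.} Set $F_{n,x}(t)=f(t)/(x-t)$ for $|x-t|>d(n,x)$ and $0$ otherwise. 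Christoffel--Darboux gives $S_{k,2}(x)=\frac{\gamma_{k-1}}{\gamma_k}\bigl(p_k(x)c_{k-1}(F_{n,x})-p_{k-1}(x)c_k(F_{n,x})\bigr)$. Apply Cauchy--Schwarz in $k$, and Bessel's inequality to the single function $F_{n,x}$: $\bigl(\sum_kc_k^2(F_{n,x})\bigr)^{1/2}\le\sqrt2\,\|fw\|_\infty d(n,x)^{-1/2}=\sqrt2\,\|fw\|_\infty K_n(x,x)^{1/2}w(x)$.
\end{itemize}
This is exactly where the second $w(x)$ and the full factor $1/n$ come from.

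Your reduction to $\int_I|F_n(x,t)|w(t)\,dt$ can be rescued along dual lines. Keep the average over $k$ inside the absolute value, and bound the polynomial $(x-t)F_n(x,t)$ in $L^2(w^2)$ by Parseval, rather than estimating the terms $|p_j(x)p_l(t)|$ separately.
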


\proof Let $x\in \left(\frac{a_n}{n^2}, \infty\right)$. By standard arguments, let us define the interval $I(n,x):=(\max\{0,x-d(n,x)\}, x+d(n,x))$, and the length is defined by the $n$th Christoffel function, $d(n,x):\frac{\lambda_n(w^2,x)}{w^2(x)}=\frac{1}{\sum_{k=0}^{n-1}p_k^2(w^2,x)w^2(x)}$. Denoting by $K_k(x,y):=\sum_{j=0}^{k-1}p_j(x)p_j(y)$, we decompose the partial sums as follows.
$$S_k(x)=\int_{I(n,x)}K_k(x,y)f(y)w^2(y)dy+\int_{\mathbb{R}_+\setminus I(n,x)}K_k(x,y)f(y)w^2(y)dy=:S_{k,1}+S_{k,2}.$$
Then, by the Cauchy-Schwartz inequality, and then taking into account the monotonicity of the Christoffel function we have
$$|S_{k,1}(x)w(x)|\le \|fw\|_\infty \int_{I(n,x)}|K_k(x,y)|w(y)dy$$ $$\le c\|fw\|_\infty \sqrt{d(n,x)}\left(\int_0^\infty\left(\sum_{j=0}^{k-1}p_j(x)p_j(y)\right)^2w^2(y)dy\right)^{\frac{1}{2}}$$ $$
\le c\|fw\|_\infty \sqrt{d(n,x)}\sqrt{\sum_{j=0}^{k-1}p_j^2(w^2,x)}w(x)\le c\|fw\|_\infty ,$$
and of course, the same estimate is valid for $\frac{1}{n}\sum_{k=0}^{n-1}|S_{k,1}(x)|w(x)$.
On the other hand, let us introduce the functions $F_{n,x}(y):=\left\{\begin{array}{ll}\frac{f(y)}{x-y}, |x-y|>d(n,x),\\ 0, |x-y|\le d(n,x).\end{array}\right.$ Then by the Christoffel-Darboux formula we have
$$\frac{1}{n}\sum_{k=0}^{n-1}|S_{k,2}(x)|=\frac{1}{n}\sum_{k=0}^{n-1}\left|\frac{\gamma_{k-1}}{\gamma_k}p_k(x)c_{k-1}(F_{n,x})-p_{k-1}(x)c_{k}(F_{n,x})\right|,$$
where $\gamma_k$ the leading coefficient of $p_k$ and $c_k(F)$ stands for the $k$th Fourier coefficient of $F$. Thus, again by the Cauchy-Schwartz inequality we have
$$\frac{1}{n}\sum_{k=0}^{n-1}|S_{k,2}(x)|w(x)\le \frac{1}{n}\sup_{k=1}^n\frac{\gamma_{k-1}}{\gamma_k}\sqrt{\sum_{k=0}^{n-1}p_k^2(w^2,x)}\sqrt{\sum_{k=0}^{n-1}c_k^2(F_{n,x})}w(x).$$
Applying Bessel's inequality the last two factors are
$$\sqrt{\sum_{k=0}^{n-1}c_k^2(F_{n,x})}w(x)\le\left(\int_{\mathbb{R}_+\setminus I(n,x)}\frac{f^2(y)}{(x-y)^2}w^2(y)dy\right)^{\frac{1}{2}}w(x)$$ $$\le c \|fw\|_\infty \frac{w(x)}{\sqrt{d(n,x)}}.$$
Summarizing, for  $x> \frac{a_n}{n^2}$ we get the following estimate.
\begin{equation}\label{sig1}|\sigma_n(f,w^2)(x)w(x)\phi(x)|\le c \|fw\|_\infty \left(1+\frac{1}{n}\sup_{k=1}^n\frac{\gamma_{k-1}}{\gamma_k}\frac{w^2(x)\phi(x)}{\lambda_n(w^2,x)}\right).\end{equation}

\medskip

Now we specify $\phi$. Let $\phi(x):=\left\{\begin{array}{ll}\sqrt{x}, 0<x<1,\\ 1, x\ge 1.\end{array}\right.$

\begin{lemma}\label{sikor} With the notation above, if $fw\in L^\infty(\mathbb{R}_+)$, then
$$\|\sigma_n(f,w^2)w\phi\|_\infty \le c \sqrt{a_n}\|fw\|_\infty.$$
\end{lemma}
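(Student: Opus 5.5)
Combining Lemma \ref{gences} with the Christoffel function lower bound of Lemma \ref{lelu} should do it. With our choice of $\phi$ and the weight $w(x)=x^\alpha e^{-x^\beta/2}$, we have $w^2=\tilde w^2x^{2\alpha}$. Lemma \ref{gences} gives, for $x>a_n/n^2$,
$$|\sigma_n(f,w^2)(x)w(x)\phi(x)|\le c\|fw\|_\infty\Bigl(1+\frac1n\sup_{k\le n}\frac{\gamma_{k-1}}{\gamma_k}\,\frac{w^2(x)\phi(x)}{\lambda_n(w^2,x)}\Bigr),$$
so everything reduces to bounding the two factors $\frac{\gamma_{k-1}}{\gamma_k}$ and $\frac{w^2\phi}{\lambda_n}$.

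\emph{Leading coefficients.} For Freud/Laguerre-type weights on $[0,\infty)$, the ratio $\gamma_{k-1}/\gamma_k$ equals the off-diagonal recurrence coefficient. This coefficient is comparable to $a_k$, since the zeros of $p_k$ fill $[0,a_k]$ and so the recurrence coefficients are $O(a_k)$. Since $a_k=c k^{1/\beta}$ is increasing, we get $\sup_{k\le n}\gamma_{k-1}/\gamma_k\le c\,a_n$. If needed, I would justify this by the standard estimate $\|xp_{k-1}\|\lesssim a_k$ together with $\gamma_{k-1}/\gamma_k=\int xp_{k-1}p_kw^2$.

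\emph{Christoffel factor.} By Lemma \ref{lelu},
$$\frac{w^2(x)}{\lambda_n(w^2,x)}\le c\,\frac{x^{2\alpha}}{\Psi_n(x)(x+a_n/n^2)^{2\alpha}}\le \frac{c}{\Psi_n(x)}\quad (x>a_n/n^2),$$
the last step for $\alpha\ge0$; for $\alpha<0$ the ratio is also bounded because $x\ge a_n/n^2$. The remaining task is to estimate $\phi(x)/\Psi_n(x)$ and show that $\frac{a_n}{n}\frac{\phi(x)}{\Psi_n(x)}\le c\sqrt{a_n}$, i.e.\ $\phi(x)/\Psi_n(x)\le c\,n/\sqrt{a_n}$.
\begin{itemize}
\item On $[0,a_n]$: since $a_{2n}-x\ge c a_n$ and $\sqrt{a_n-x+a_n\eta_n}\le\sqrt{2a_n}$, we get $\Psi_n(x)\ge c\sqrt{x}\sqrt{a_n}/n$. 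For $x<1$, $\phi(x)=\sqrt x$ cancels the $\sqrt x$. For $1\le x\le a_n$, $\phi=1\le\sqrt x$. In both cases $\phi/\Psi_n\le c n/\sqrt{a_n}$.
\item For $x>a_n$: $\Psi_n\equiv\Psi_n(a_n)\approx \sqrt{a_n}\,a_n/(n\sqrt{a_n\eta_n})$, which is even larger, so the bound persists.
\end{itemize}
Thus the bracket is $O(1+\sqrt{a_n})=O(\sqrt{a_n})$ there.

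\emph{Small $x$.} On $[0,a_n/n^2]$ the proof of Lemma \ref{gences} is not available. Here I would use that $\phi(x)\le\sqrt{a_n}/n$, combined with a crude bound $|\sigma_n(f)(x)w(x)|\le \|fw\|_\infty\sup_k\int|K_k(x,y)|w(y)dy\,w(x)$. By Cauchy--Schwarz this is $\le\|fw\|_\infty\,(\lambda_n(w^2,x)^{-1}\int_{\mathrm{supp}}w^2)^{1/2}w(x)$. Lemma \ref{lelu} with $\Psi_n(x)\gtrsim a_n/n^2$ near $0$ gives $w^2/\lambda_n\lesssim n^2/a_n$ times a bounded factor. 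The resulting bound is $\phi\cdot O(n/\sqrt{a_n})\cdot\sqrt{\int w}\le c$.

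This small-$x$ region, together with uniform control of $\gamma_{k-1}/\gamma_k$, is the main obstacle. The factor $\int w$ is finite because $w$ is integrable when $\alpha>-1$, which holds since $\alpha>-\tfrac12$. Alternatively, one can apply a Nikolskii/infinite–finite range inequality to move the estimate to $x=a_n/n^2$.
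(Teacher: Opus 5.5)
Your treatment of $x>a_n/n^2$ is the paper's argument: Lemma~\ref{gences}, the bound $\sup_{k\le n}\gamma_{k-1}/\gamma_k\le c\,a_n$, and Lemma~\ref{lelu} make the bracket $O(\sqrt{a_n})$, and your estimate $\phi/\Psi_n\le c\,n/\sqrt{a_n}$ is correct. The paper handles $[0,a_n/n^2]$ in one line: a weighted polynomial attains its sup norm on the M-R-S range $(a_n/n^2,a_n)$. That is the alternative you mention only in your last sentence. Your primary argument for that region does not work as written.

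\emph{First, the Cauchy--Schwarz step is invalid.} You start from $|S_k(x)|\le\|fw\|_\infty\int_0^\infty|K_k(x,y)|w(y)\,dy$. The only way to produce $\int_0^\infty K_k^2(x,y)w^2(y)\,dy=\sum_{j<k}p_j^2(x)$ is to pair $|K_k(x,\cdot)|w$ with the constant $1$. The complementary factor is then $\bigl(\int_0^\infty dy\bigr)^{1/2}=\infty$. No split produces $\int w^2$, nor $\int w$ (you use both).

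What is true is $\int_0^\infty|K_k(x,y)|w(y)\,dy\le c\sqrt{a_n}\,\bigl(\sum_{j<k}p_j^2(x)\bigr)^{1/2}$. To see it, split at $y=a_n$ and use Cauchy--Schwarz on $[0,a_n]$. On the tail, pair with $(1+y)^{-1}$ and apply an $L^2$ infinite--finite range inequality to the polynomial $(1+y)K_k(x,y)$. So the effective length of the support is $a_n$; it is not governed by $\int w$.

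\emph{Second, the ``bounded factor'' fails for negative $\alpha$.} On $[0,a_n/n^2]$ the bound $w^2/\lambda_n\lesssim n^2/a_n$ up to a bounded factor is false for $-\frac12<\alpha<0$, because $x^{2\alpha}(x+a_n/n^2)^{-2\alpha}\to\infty$ as $x\to0$. You must keep $\phi^2=x$ together with this factor and use $2\alpha+1>0$. That gives $\phi^2(x)w^2(x)/\lambda_n(w^2,x)\le c\,x^{2\alpha+1}(a_n/n^2)^{-2\alpha-1}\le c$.

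With both corrections, your crude argument yields $c\sqrt{a_n}\|fw\|_\infty$ on $[0,a_n/n^2]$. That is not the $O(1)$ you claim, but it is enough for the lemma.

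Alternatively, argue as the paper does. $\sigma_n(f,w^2)$ is a polynomial of degree $<n$, and $w\phi$ behaves like $x^{\alpha+1/2}e^{-x^\beta/2}$ near $0$ with $\alpha+\frac12>0$; this is where the factor $\sqrt{x}$ in $\phi$ is needed. A restricted-range (Schur-type) inequality at the endpoint $0$ then bounds $\sup_{[0,a_n/n^2]}|\sigma_n(f,w^2)w\phi|$ by $c\sup_{[a_n/n^2,a_n]}|\sigma_n(f,w^2)w\phi|$. A Nikolskii inequality is not the relevant tool here.
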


\proof  According to Lemma \ref{lelu} for all $x>0$
\begin{equation}\label{lam}\frac{w^2(x)}{\lambda_n(w^2,x)}\le c\frac{x^{2\alpha}}{\left(x+\frac{a_n}{n^2}\right)^{2\alpha}}\left\{\begin{array}{ll}\frac{n \left(a_n+a_n n^{-\frac{2}{3}}-x\right)^{\frac{1}{2}}}{\sqrt{x+\frac{a_n}{n^2}}(a_{2n}-x)}, \ws x<a_n\\ \frac{n^{\frac{2}{3}}}{a_n}, \ws x\ge a_n.\end{array}\right.\end{equation}
Thus, taking into account that on infinite intervals $\sup_{k=1}^n\frac{\gamma_{k-1}}{\gamma_k}\le c a_n$ for $x> \frac{a_n}{n^2}$ we have
$$|\sigma_n(f,w^2)(x)w(x)\phi(x)|\le c \left\{\begin{array}{ll}\sqrt{a_n}h(x), \ws x< a_n,\\ n^{-\frac{1}{3}}, \ws x\ge a_n,\end{array}\right.$$
where $h(x)=\left\{\begin{array}{ll}\left(\frac{x}{x+a_n n^{-2}}\right)^{2\alpha+\frac{1}{2}}, x<1\\ \frac{x^{2\alpha}}{(x+a_n n^{-2})^{2\alpha+\frac{1}{2}}}, \ws x\ge 1, \end{array}\right.$ which is bounded.

Since a weighted polynomial attains its norm in the M-R-S interval, $I_{n,w}:=\left(\frac{a_n}{n^2}, a_n\right)$, c.f. \cite{lelu} and the references therein, in view of \eqref{sig1} and \eqref{lam}, the lemma is proven.

\medskip

To continue the proof we need the following lemma.

Let $\varphi(x):=\sqrt{x}$. The best approximation of a function $f$ in $p$-norm with polynomials of degree at most $n$ is
\begin{equation}\label{bestpol}E_n(f)_{w,p}:=\min_{p\in\Pi_n}\|(f-p)w\|_p.\end{equation}

\begin{lemma}\label{ms}\cite[Proposition 4.1]{ms} Let $w=w_{\alpha,\beta}$ as above. For each function $f\in W^p_1(w)$, $1\le p \le \infty$, we have
$$E_m(f)_{w,p}\le C\frac{\sqrt{a_m}}{m}\|f'\varphi w\|_p.$$
\end{lemma}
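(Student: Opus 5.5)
The plan is to prove this Jackson-type estimate by reducing it, via the Mhaskar--Rakhmanov--Saff scaling, to an unweighted Ditzian--Totik estimate on a fixed interval. The factor $\sqrt{a_m}$ should come purely from the change of variables. Put $x=a_m u$, so that on the M-R-S interval $u\in[0,1]$. For $g(u):=f(a_mu)$ we have $g'(u)=a_mf'(x)$ and $\sqrt{u}=\sqrt{x}/\sqrt{a_m}$. Hence
$$g'(u)\sqrt{u}=\sqrt{a_m}\,f'(x)\varphi(x).$$
The classical Ditzian--Totik Jackson inequality on $[0,1]$ with step-weight $\sqrt{u(1-u)}\le\sqrt{u}$ gives a polynomial of degree $m$ with error at most $\frac{C}{m}\|g'\sqrt{u}\|$. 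Rescaling then produces exactly the factor $\frac{\sqrt{a_m}}{m}\|f'\varphi\|$. In $L^p$ the Jacobian powers $a_m^{1/p}$ appear on both sides and cancel.

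\textbf{Step 1 (weighted local approximation on the M-R-S interval).} The weight $w=x^\alpha e^{-x^\beta/2}$ is not constant on $[0,a_m]$, so the unweighted estimate cannot be applied verbatim. I would first treat the factor $\tilde w=e^{-x^\beta/2}$. Following Mhaskar's method, there are polynomials $q_m$ of degree $\le \varepsilon m$ with $q_m\tilde w\sim 1$ on $[0,a_{m}]$, obtained from the Christoffel-function estimate of Lemma \ref{lelu} (or from a discretised potential of the equilibrium measure). It then suffices to approximate $f\tilde w\,q_m^{-1}$-type products, or equivalently to apply a Jackson kernel whose localisation length at $x$ is $\sqrt{a_m x}/m+a_m/m^2$. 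On that scale $\tilde w$ varies only by a bounded factor, because $x^{\beta-1}\sqrt{a_mx}/m\lesssim a_m^{\beta}/m\sim 1$. The Jacobi part $x^\alpha$ is handled exactly as in the Ditzian--Totik theory for Jacobi weights at the endpoint $0$. I would use a de la Vall\'ee Poussin-type (or Jackson) operator $L_m$ built on the orthonormal system $p_n(w^2)$, show it is bounded in $\|\cdot\, w\|_p$, and show it reproduces $\Pi_{m/2}$. The Jackson estimate then follows from a Bernstein--Markov inequality $\|P'\varphi w\|_p\le C\frac{m}{\sqrt{a_m}}\|Pw\|_p$ through the usual $K$-functional argument. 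Concretely, one defines
$$K(f,t)=\inf_g\{\|(f-g)w\|_p+t\|g'\varphi w\|_p\}$$
and proves $E_m(f)_{w,p}\le CK\bigl(f,\sqrt{a_m}/m\bigr)$.

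\textbf{Step 2 (tail beyond $a_m$).} By the infinite--finite range inequality, $\|Pw\|_{L^p[0,\infty)}\le C\|Pw\|_{L^p[0,a_m(1+\eta_m)]}$, so polynomials are controlled by their behaviour inside. For $f$ itself on $(a_{m},\infty)$ I would write $f(x)=f(a_m)+\int_{a_m}^x f'$. Using the fast decay of $w$, a weighted Hardy inequality then bounds $\|(f-f(a_m))w\|_{L^p(a_m,\infty)}$ by
$$C\sup_{x>a_m}\frac{x^{1/2}\,x^{1-\beta}}{x}\,\|f'\varphi w\|_p\le C\frac{a_m^{1-\beta}}{\sqrt{a_m}}\|f'\varphi w\|_p.$$
Since $a_m^\beta\sim m$, this is of order $\frac{\sqrt{a_m}}{m}\|f'\varphi w\|_p$, as required. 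The constant $f(a_m)$ is a polynomial and is absorbed.

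The main obstacle is Step 1: establishing uniform boundedness of the weighted operator $L_m$, together with the matching weighted Bernstein inequality with $\varphi=\sqrt{x}$, simultaneously near the hard edge $0$ and near the soft edge $a_m$, uniformly in $1\le p\le\infty$. I would first try to obtain boundedness for $p=1$ and $p=\infty$ from kernel estimates supplied by Lemma \ref{lelu}, in the spirit of Lemma \ref{gences}, and then interpolate.
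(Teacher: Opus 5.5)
The paper gives no proof of this lemma; it is quoted from Mastroianni--Szabados \cite{ms}. Judged on its own, your proposal has a genuine gap, and it sits in Step~1, the only step that carries the content. The mechanism you describe cannot yield a Favard inequality.

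\begin{itemize}
\item A uniformly bounded operator $L_m$ reproducing $\Pi_{m/2}$ only gives $\|(f-L_mf)w\|_p\le CE_{m/2}(f)_{w,p}$. This sends you back to best approximation.
\item A Bernstein inequality $\|P'\varphi w\|_p\le C\frac{m}{\sqrt{a_m}}\|Pw\|_p$ serves the \emph{inverse} direction: it bounds $K(f,\sqrt{a_m}/m)$ by the numbers $E_k(f)_{w,p}$, $k\le m$. It never gives a Jackson bound.
\item The estimate you then propose to prove, $E_m(f)_{w,p}\le CK(f,\sqrt{a_m}/m)$, is equivalent to the lemma itself. In one direction take $g=f$; in the other insert $g$ into $E_m(f)_{w,p}\le\|(f-g)w\|_p+E_m(g)_{w,p}$. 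So the argument is circular.
\item The Christoffel--Darboux technique of Lemma \ref{gences} does not even give uniform boundedness here: Lemma \ref{sikor} loses a factor $\sqrt{a_n}$.
\end{itemize}
What is missing is a genuinely direct construction that turns the local scale $\sqrt{a_mx}/m+a_m/m^2$ into an approximation error. One example would be a polynomial kernel that reproduces constants and whose first moment, measured in the metric $|\sqrt{x}-\sqrt{t}|$ and against the ratio $w(x)/w(t)$, is $O(\sqrt{a_m}/m)$.

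The auxiliary claims also need repair.

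\begin{itemize}
\item Step~1: for small $\varepsilon$ there is no polynomial $q_m$ of degree $\le\varepsilon m$ with $q_m\tilde w\sim 1$ on all of $[0,a_m]$. Since $a_m/a_{\varepsilon m}=\varepsilon^{-1/\beta}$, an infinite--finite range (Bernstein--Walsh type) estimate gives $|q\tilde w|(a_m)\le e^{-cm}\max|q\tilde w|$ for every such $q$.
\item Step~2: the Hardy estimate is right, since $x^{1/2-\beta}\le a_m^{1/2-\beta}\sim\sqrt{a_m}/m$. But ``the constant $f(a_m)$ is absorbed'' ignores the tail of the approximating polynomial $P$. The infinite--finite range inequality controls $\|(P-f(a_m))w\|_{L^p(a_m,\infty)}$ only through $\|(P-f(a_m))w\|_{L^p(0,a_m(1+\eta_m))}$, hence through $\|(f-f(a_m))w\|_{L^p(0,a_m)}$. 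That quantity is of size $\|f'\varphi w\|_p$, not $\frac{\sqrt{a_m}}{m}\|f'\varphi w\|_p$.
\end{itemize}
You must either use polynomials of degree a small fraction $\theta m$, so their weighted tails beyond the cut point are exponentially small, or approximate the globally modified function.

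A clean way to organize a correct proof is the substitution $x=t^2$. With $g(t)=f(t^2)$ one has $|g'(t)|=2\varphi(x)|f'(x)|$ and $\|Fw\|_{L^p(0,\infty)}=\|F(t^2)W(t)\|_{L^p(\mathbb{R})}$, where $W(t)=|t|^{2\alpha+1/p}e^{-|t|^{2\beta}/2}$. By evenness, $E_m(f)_{w,p}=E_{2m}(g)_{W,p}$. The lemma then becomes the Freud-type Favard inequality $E_{2m}(g)_{W,p}\le C\frac{a_{2m}(W)}{2m}\|g'W\|_p$, with $a_{2m}(W)\sim m^{1/(2\beta)}\sim\sqrt{a_m}$. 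This explains both your scaling factor and the hypothesis $\beta>1/2$ (Freud exponent $2\beta>1$). The real work then lies in that Favard inequality, which you would have to prove or cite.
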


\medskip

\begin{lemma}\label{dbecs}  With the notation of Lemma \ref{sikor}, let $\beta=2$. If $f$ is differentiable such that $fw,f'w\varphi \in L^\infty(\mathbb{R}_+)$, $fw\in  L^2(\mathbb{R}_+)$, then
$$\|(f-\sigma_n(f,w^2)w\varphi\|_\infty=O(n^{-\frac{7}{32}}).$$
\end{lemma}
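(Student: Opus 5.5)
We will combine the elementary decomposition \eqref{f-sig} with the uniform bound of Lemma \ref{sikor} and the Jackson-type estimate of Lemma \ref{ms}, taking $p=\infty$ and $\beta=2$, so that $a_m=c\,m^{1/2}$.

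Fix $m<n$, to be chosen later as a power of $n$, and let $p_m$ be a polynomial of best approximation, i.e. $\|(f-p_m)w\|_\infty=E_m(f)_{w,\infty}$. By \eqref{f-sig},
$$f-\sigma_n(f)=(f-p_m)+\frac{m}{n}p_m-\frac{m+1}{n}\sigma_{m+1}(p_m)+\sigma_n(p_m-f).$$
Here we use that $\sigma_n(p_m)=p_m-\frac{m}{n}p_m+\frac{m+1}{n}\sigma_{m+1}(p_m)$, since $S_k(p_m)=p_m$ for $k>m$. We multiply by $w\varphi$ and bound the four terms in turn.

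\textbf{Term 1.} Since $\varphi\le1$ is not available globally ($\varphi=\sqrt{x}$ is unbounded), we instead use $\phi$ from Lemma \ref{sikor}, which is bounded by one, or restrict to the relevant range. Then
$$\|(f-p_m)w\phi\|_\infty\le E_m(f)_{w,\infty}\le C\frac{\sqrt{a_m}}{m}\|f'\varphi w\|_\infty = O(m^{-3/4}).$$

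\textbf{Term 4.} Lemma \ref{sikor} applied to $p_m-f$ gives
$$\|\sigma_n(p_m-f)w\phi\|_\infty\le c\sqrt{a_n}\,\|(p_m-f)w\|_\infty=O\bigl(n^{1/4}m^{-3/4}\bigr).$$

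\textbf{Terms 2 and 3.} Since $\|p_mw\|_\infty\le \|fw\|_\infty+E_m(f)\le C$, we get $\frac mn\|p_mw\phi\|_\infty=O(m/n)$. Lemma \ref{sikor} gives
$$\frac{m+1}{n}\|\sigma_{m+1}(p_m)w\phi\|_\infty\le c\frac{m}{n}\sqrt{a_{m}}=O\bigl(m^{5/4}/n\bigr).$$

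The dominant error is therefore $n^{1/4}m^{-3/4}+m^{5/4}n^{-1}$. Balancing with $m=n^{\delta}$ requires $\tfrac14-\tfrac34\delta=\tfrac54\delta-1$, i.e. $\delta=5/8$, which yields $O(n^{-7/32})$. This matches the claimed rate, and strongly suggests this is the intended bookkeeping.

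The main obstacle is the passage between the weight factors $\varphi$ and $\phi$. The statement has $\varphi=\sqrt{x}$ while Lemma \ref{sikor} controls $w\phi$. On $[0,1]$ the two coincide, and the bound there is as above. For $x\ge1$ the factor $\sqrt{x}$ is bounded by $\sqrt{a_{2n}}$ on the M-R-S range, which would spoil the rate. So I will interpret the norm with $\phi$, which is what the statement's ``notation of Lemma \ref{sikor}'' indicates. Alternatively, one can use that weighted polynomials attain their norm in $(a_n n^{-2},a_n)$ and absorb the growth, though this would lose powers. The second delicate point is that Lemma \ref{sikor} only covers $x>a_n/n^2$. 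For smaller $x$ I will use $\phi(x)\le\sqrt{a_n}/n$ together with the Christoffel bound $w^2/\lambda_n\lesssim n/\sqrt{a_n}$ near $0$ from \eqref{lam}. This gives a bounded contribution from the same Christoffel–Darboux argument, so the estimate of Lemma \ref{sikor} extends to all $x\ge0$.
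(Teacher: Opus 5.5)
Your proposal is correct and is essentially the paper's proof. Both use the splitting \eqref{f-sig} with the best weighted uniform approximant, Lemma \ref{sikor} for the factors $\sqrt{a_n}=n^{1/4}$ and $(m+1)\sqrt{a_{m+1}}/n\sim m^{5/4}/n$, Lemma \ref{ms} for $E_m(f)_w=O(m^{-3/4})$, and the choice $m=n^{5/8}$; reading the conclusion with the bounded weight $w\phi$ is exactly what the paper's proof (and its use in Theorem \ref{con3}) does. Your extra treatment of $x<a_n/n^2$ is unnecessary, since Lemma \ref{sikor} is stated as a global sup bound, and it is slightly off as written: near $0$, \eqref{lam} gives $w^2/\lambda_n\lesssim n^2/a_n$, which yields $O(\sqrt{a_n})$ rather than a bounded contribution.
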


\proof
Recalling the definition of the uniformly best approximating polynomial of degree $n$ $P_{n,u,w}$ i.e. $\|(f-P_{n,u,w})w\|_\infty=E_n(f)_w$, where $E_n(f)_w$ stands for $E_n(f)_{w,\infty}$, by \eqref{f-sig} we have the estimation
$$\|(f-\sigma_n(f,w^2)w\phi\|\le \|(f-P_{m,u,w})w\phi\|+\|(P_{m,u,w}-\sigma_n(P_{m,u,w},w^2))w\phi\|$$ $$\|\sigma_n(P_{m,u,w},w^2-f)w\phi\|=\Sigma_1+\Sigma_2+\Sigma_3,$$
where $\|\cdot\|$ stands for infinity-norm. Simple calculations together with Lemma \ref{sikor} give for $m<n$,
$$\Sigma_2=\le c\left(\frac{(m+1)m^\frac{1}{4}}{n}+\frac{m}{n}\right)\|P_{m,u,w})w\phi\|.$$
Since $\phi$ is bounded, and being the best approximating polynomial, $\|P_{m,u,w})w\|$ is bounded as well, $\Sigma_2\le c \frac{m^{\frac{5}{4}}}{n}$.   $\|\Sigma_1w\phi\|\le E_m(f)w$; and by Lemma \ref{sikor} $\|\Sigma_3w\phi\|\le n^{\frac{1}{4}}E_m(f)w$. According to Lemma \ref{ms}
$$E_m(f)_w\le c\frac{\sqrt{a_m}}{m}\|f'w\varphi\|.$$
Thus, selecting $m=n^{\frac{5}{8}}$, the lemma is proven.

\medskip

\proof (of Theorem \ref{con3}.) Since $f\in L^1_a(\mathbb{R}^d_+)$, $M_{sph,a}(f)_x\in L_{2\gamma+1}^1(\mathbb{R}_+)$ and by \eqref{Y} $M_{sph,a}(f(x,\cdot)*_\gamma e^{-\frac{(\cdot)^2}{4}}\in L_{2\gamma+1}^1(\mathbb{R}_+)$ as well, and the continuity properties are also inherited. Thus the inversion formula holds and the inverse image can be expressed in the stated convolution form. We also have that $\widehat{M_{sph,a}(f)_x}\in L^\infty(\mathbb{R}_+)$, thus with $w^2(\tau)=e^{-\tau^2}\tau^{2\gamma+1}$,  $\widehat{M_{sph,a}(f)_x}\in L^2_{w^2}$,  that is  $\widehat{M_{sph,a}(f)_x}$ can be expanded.
We also have that $\widehat{M_{sph,a}(f)_x}w\in L^\infty(\mathbb{R}_+)$ which, together with the  assumptions of the theorem ensure that the conditions of Lemma \ref{dbecs} are fulfilled.\\ That is the left-hand side of \eqref{c3} is just
$$c(\gamma)\left|\int_0^\infty (\widehat{M_{sph,a}(f)_x(}\tau)-\sigma_n(\widehat{M_{sph,a}(f)_x(}\tau))j_\gamma(t\tau)e^{-\tau^2}\tau^{2\gamma+1}d\tau\right|=:D_n$$
Since $|j_\gamma(t\tau)|\le 1$, denoting by $\tilde{\phi}(\tau)=\left\{\begin{array}{ll}1,\ws \tau<1\\\tau^{-\frac{1}{2}}, \ws \tau\ge 1\end{array}\right.$,  by Lemma \ref{dbecs} we have
$$D_n\le \|(\widehat{M_{sph,a}(f)_x}-\sigma_n(\widehat{M_{sph,a}(f)_x})w\phi\|_\infty \int_0^\infty \tau^\gamma\tilde{\phi}(\tau)e^{-\frac{\tau^2}{2}}d\tau\le c n^{-\frac{7}{32}}.$$

\noindent{\bf Remark.}

\noindent (1) To estimate the speed of convergence it is enough to assume that  $M_{sph,a}(f(x,\cdot)*_\gamma e^{-\frac{(\cdot)^2}{4}}\in L_{2\gamma+1}^1(\mathbb{R}_+)$. The condition $f\in L^1_a$ is necessary for the convolution form of the left-hand side.

\noindent (2) Condition (*): $\left\|\partial_\tau \widehat{M_{sph,a}(f)_x}(\tau)\tau^{\gamma+1}e^{-\frac{\tau^2}{2}}\right\|_\infty <\infty$ can be replaced by\\
(**): $M_{sph,a}(f)_x\in L^1_{2\gamma+3}(\mathbb{R}_+)$. Indeed, in view of \eqref{bder}
$$\partial_\tau \widehat{M_{sph,a}(f)_x}(\tau)=\frac{-\tau}{2(\gamma+1)}\int_0^\infty M_{sph,a}(f)_x(t)j_{\gamma+1}(t\tau)t^{2(\gamma+1)+1}dt.$$
Thus  by (**), $(*)\le c\|\tau^{\gamma+2}e^{-\frac{\tau^2}{2}}\|_\infty <\infty$.

\noindent (3) The estimation of the Christoffel function (cf. Lemma \ref{lelu}) is given for more general weights of form $x^{2\varrho}e^{-Q(x)}$ on $[0,\nu)$, where $\nu$ if finite or infinite. Thus with an appropriate $\phi$ one can give estimations similar to Lemma \ref{sikor} which implies theorems similar to Theorem \ref{con3}. Our choice is motivated by the explicit form of the inverse Hankel transform of $w$.

\medskip

\subsubsection{Bandlimited case}
In this subsection we investigate functions under the assumption $\mathrm{supp}\widehat{\Phi_x(\cdot)}\subset [0,\nu]$.\\
Assume that $\widehat{\Phi_x}\in L^2_{2\gamma+1}([0,\nu])$. Let our orthonormal system on $[0,\nu]$ be the system of transformed Jacobi polynomials, i.e.
\begin{equation}\label{modjac}p_k(\varrho)=cp_k^{(0,\gamma)}\left(\frac{2\varrho}{\nu}-1\right),\end{equation}
where the original Jacobi polynomials, $\left\{p_k^{(\alpha,\beta)}\right\}_{k=0}^\infty$ are the orthonormal polynomials on $[-1,1]$ with respect to the weight \begin{equation}\label{jacw}w^{(\alpha,\beta)}(x)=(1-x)^\alpha(1+x)^\beta,\ws \ws \alpha,\beta>-1.\end{equation}
Then, with the above notation we have
$$p_k((-B_{\gamma})^{\frac{1}{2}}_t)(\Phi_x(t)=\int_0^\nu p_k(\tau)\widehat{\Phi_x}(\tau)j_\gamma(t\tau)\tau^{2\gamma+1}d\tau.$$
Since $\widehat{\Phi_x}\in L^2_\gamma([0,\nu])$,
$$\widehat{\Phi_x}(\tau)=\sum_{k=0}^\infty c_k(x)p_k(\tau),$$
in $L^2_{2\gamma+1}$-sense, where
$$c_k(x)=p_k((-B_{\gamma})^{\frac{1}{2}}_t)(\Phi_x(0)=\int_0^\nu p_k(\tau)\widehat{\Phi_x}(\tau)\tau^{2\gamma+1}d\tau=:K^k_{B}(\Phi_x(\cdot))(0).$$
As above, we define $\varphi$ and $\varphi_k$.
$$\varphi(t):=\mathcal{H}^{-1}_\gamma(\chi_{[0,\nu]})=c(\gamma)\int_0^\nu j_{\gamma}(t\tau)\tau^{2\gamma+1}d\tau=c(\nu,d,a)j_{\frac{d+|a|}{2}}(\nu t),$$
where $c(\nu,d,a)=\frac{\nu^{\frac{d+|a|}{2}}}{2^{d+|a|-3}(d+|a|)\Gamma^2\left(\frac{d+|a|}{2}\right)}$, cf. \eqref{H(I)}.
$$\varphi_k(t):=\mathcal{H}^{-1}_\gamma(p_k\chi_{[0,\nu]})= c(\gamma)\int_0^\nu p_k(\tau)j_{\gamma}(t\tau)\tau^{2\gamma+1}d\tau=K^k_{B}(\varphi)(t).$$
Now, the chromatic expansion of the spherical mean of $f$ is
\begin{equation}\label{chrosph} M_{sph,a}(f)(x,t)=c(\nu,d,a)\sum_{k=0}^\infty K^k_{B}(M_{sph,a}(f)_x(\cdot))(0)K^k_{B}(j_{\gamma+1}(\nu \cdot))(t).\end{equation}
Since  $\widehat{\Phi_x}$ is expanded in a finite interval, its Jacobi-Fourier series is convergent in $L^1_{2\gamma+1}$-sense as well, thus application of the inverse Hankel transform implies that the convergence in \eqref{chrosph} is uniform.

Now, to recover the function from its spherical chromatic expansion we need the inversion formula for the weighted spherical mean.

\medskip

\begin{lemma}\label{spheinv}\cite[Theorem 5]{s} Let $m=\left[\frac{d+|a|-1}{2}\right]+1$. Let $f\in C^{2m}(\mathbb{R}^d_+)$, such that $\frac{\partial}{\partial x_i}f|_{x_i=0}=0$, $i=1, \dots , d$.  Then
$$h(u)f(x)=\frac{\sqrt{\pi}}{2^{2(m-1)}\Gamma(m)\Gamma\left(\frac{2m-d-|a|+1}{2}\right)\Gamma\left(\frac{d+|a|}{2}\right)}$$ $$\times \left(\frac{\partial^2}{\partial u^2}-\Delta_{a,x}\right)^m\int_0^\infty h(u-s)\int_0^s (s^2-t^2)^{\frac{2m-d-|a|-1}{2}}M_{sph,a}(f)(x,t)t^{2\gamma+1}dtds,$$
where $h\in C^{2m}(\mathbb{R})$ is an arbitrary function such that the integral on the right-hand side is convergent.
\end{lemma}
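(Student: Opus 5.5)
The plan is to reduce the inversion to a one-variable Euler--Poisson--Darboux (EPD) problem via the intertwining property of Proposition~\ref{intw0}, and then to invert a Riemann--Liouville-type operator in the variable $s$. Write $n:=d+|a|$, so that $2\gamma+1=n-1$, and set $\Phi_x(t)=M_{sph,a}(f)(x,t)$. By Proposition~\ref{intw0} together with \eqref{BT}, $u(x,t)=\Phi_x(t)$ solves the EPD equation
\[
(B_\gamma)_t u=(\Delta_a)_x u,\qquad u(x,0)=f(x),\qquad \partial_t u(x,0)=0 .
\]
The evenness hypothesis $\partial_{x_i}f|_{x_i=0}=0$ makes $f$ admissible for $\Delta_a$ and the Bessel translation, so this step is legitimate.

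Next, introduce the Erdélyi--Kober/Riemann--Liouville transform
\[
W(x,s):=\int_0^s (s^2-t^2)^{\mu}\,\Phi_x(t)\,t^{2\gamma+1}\,dt,\qquad \mu=\frac{2m-n-1}{2}.
\]
Note that $\mu>-1$ by the choice of $m$. The key identity to establish is that $W$, up to a power of $s$, transforms $B_\gamma$ into the operator with index raised to $\gamma+\mu+1=m-\frac12$. Equivalently, writing $W(x,s)=s^{2m-1}V(x,s)$, the function $V$ solves EPD with parameter $2m-1$:
\[
\partial_s^2V+\frac{2m-1}{s}\partial_sV=\Delta_{a,x}V .
\]
I would verify this on the eigenfunctions $\mathbb{j}_a(x,\xi)$ through Proposition~\ref{t73}: there $\Phi_x$ is replaced by $j_\gamma(t|\xi|)\hat f(\xi)$, and formula \eqref{H(Imu)} gives explicitly
\[
\int_0^s (s^2-t^2)^{\mu}\, j_\gamma(t\varrho)\, t^{2\gamma+1}\,dt=c\, s^{2m-1}\, j_{m-1/2}(s\varrho).
\]
The identity then extends by Hankel inversion to $f\in C^{2m}$ (first for $\mathcal S_e$, then by density). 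The constant $c$ coming from \eqref{H(Imu)} produces most of the Gamma factors in the statement.

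Since $m-\frac12$ is a half-integer, $s^{2m-1}j_{m-1/2}(s\varrho)$ is, up to constants, $s^{m}\,(s^{-1}\partial_s)^{m-1}$ applied to $\sin(s\varrho)/\varrho$. This is the classical descent to the odd-dimensional wave equation. Consequently, $W$ is a finite combination of derivatives of the solution of the one-dimensional wave equation $\partial_s^2-\Delta_{a,x}$, and $(\partial_s^2-\Delta_{a,x})^m$ annihilates the $s^{2m-1}$-weighted EPD solution except for a delta-type boundary contribution at $s=0$ proportional to $f(x)$.

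To exploit this, convolve $W$ with $h(u-s)$ in $s$ and apply $(\partial_u^2-\Delta_{a,x})^m$. Integrating by parts in $s$ $2m$ times, using that $W$ vanishes to order $2m-1$ at $s=0$ (because $W\sim c\,s^{2m-1}f(x)$), every term cancels except the one where all derivatives fall on the leading $s^{2m-1}$ coefficient. That term gives $(2m-1)!\,c\,h(u)f(x)$, and matching constants with the duplication formula for $\Gamma$ yields the stated normalization.

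The main obstacle is this last step: showing rigorously that $(\partial_u^2-\Delta_x)^m$ applied to $h*W$ collapses to $h(u)f(x)$. I would attempt it on the Hankel side. There, everything reduces to showing
\[
(\partial_u^2+\varrho^2)^m\int_0^\infty h(u-s)\,s^{2m-1}j_{m-1/2}(s\varrho)\,ds=C\,h(u).
\]
This is a one-variable ODE fact: $s^{2m-1}j_{m-1/2}(s\varrho)$ is, up to a constant, the fundamental solution of $(\partial_s^2+\varrho^2)^m$ on the half-line, and I would check it by induction on $m$ using \eqref{bder}.
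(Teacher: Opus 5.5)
The paper does not prove this lemma; it quotes it from \cite[Theorem 5]{s}. Your Hankel-side reduction is correct for $f\in\mathcal S_e$ and reproduces the stated constant exactly. The one genuine gap is the extension ``by density'' to arbitrary $f\in C^{2m}(\mathbb R^d_+)$.

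\textbf{What your route is, and what is right.} The formula is the weighted form of M.~Riesz's identity $(\partial_u^2-\Delta_x)^mI^{2m}=\mathrm{Id}$ for the hyperbolic Riesz potential of order $2m$, applied to $h(u)f(x)$. Its kernel and constant are exactly Riesz's, with $d+|a|$ playing the role of the dimension, and your argument is the partial-transform version of that identity. With the Sonine integral
\[
\int_0^s (s^2-t^2)^{\mu}j_\gamma(t\varrho)t^{2\gamma+1}\,dt=\frac{\Gamma(\gamma+1)\Gamma(\mu+1)}{2\Gamma(m+\frac12)}\,s^{2m-1}j_{m-\frac12}(s\varrho)
\]
and the Laplace transform
\[
\int_0^\infty e^{-ps}s^{2m-1}j_{m-\frac12}(s\varrho)\,ds=\frac{(2m-1)!}{(p^2+\varrho^2)^m},
\]
your ODE fact follows at once, more cleanly than by induction via \eqref{bder}. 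Set $G(s)=s^{2m-1}j_{m-\frac12}(s\varrho)$. Then $(\partial_s^2+\varrho^2)^mG=0$ for $s>0$, $G^{(k)}(0)=0$ for $k\le 2m-2$, and $G^{(2m-1)}(0)=(2m-1)!$.

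That this normalization does not depend on $\varrho$ is what makes Hankel inversion return $f$ itself rather than a filtered version of it. Using $(2m-1)!=2^{2m-1}\Gamma(m)\Gamma(m+\frac12)/\sqrt{\pi}$, you get exactly the stated constant.

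\textbf{Corrections.}
\begin{itemize}
\item Do not take the constant from \eqref{H(Imu)} as printed; it is misprinted. Its right-hand side should be $\frac{\Gamma(\alpha+1)\Gamma(\mu+1)}{2\Gamma(\alpha+\mu+2)}a^{2(\alpha+\mu+1)}j_{\alpha+\mu+1}(ay)$, as the case $\alpha=-\frac12$, $\mu=0$ shows. The printed power $a^{\alpha+\mu+1}$ would give $s^{m-\frac12}$ and destroy the fundamental-solution property. Your $s^{2m-1}$ is the correct power.
\item $V$ satisfies the EPD equation with coefficient $\frac{2m}{s}$, not $\frac{2m-1}{s}$, since $B_{m-\frac12}=\partial_s^2+\frac{2m}{s}\partial_s$. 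This remark is not used later.
\item $s^{2m-1}j_{m-\frac12}(s\varrho)$ is a multiple of $s^{2m-1}(s^{-1}\partial_s)^{m-1}(\sin(s\varrho)/s)$, not of $s^{m}(s^{-1}\partial_s)^{m-1}(\sin(s\varrho)/\varrho)$. Already for $m=1$ one has $sj_{\frac12}(s\varrho)=\sin(s\varrho)/\varrho$. This remark is not used later either.
\end{itemize}

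\textbf{The gap.} The lemma imposes no integrability or growth condition on $f$. With $h$ compactly supported the integral converges for every continuous $f$, so $\hat f$, and with it your whole computation, may not exist. You name no topology in which $\mathcal S_e$ is dense and both sides are continuous. Moreover, for $m\ge 2$, approximation in $C^{2m}$ by elements of $\mathcal S_e$ is impossible unless the third normal derivatives of $f$ vanish, and that is not assumed.

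\textbf{How to close it.} You have two options.
\begin{itemize}
\item Restrict to a class where your calculus is justified. For example, take $f\in L^1_a$ with $\Delta_a^lf\in L^1_a$ and $\mathcal H_a(\Delta_a^lf)=(-|\xi|^2)^l\hat f$ for $l\le m$; this covers the use in Theorem~\ref{tcccc}. You also need $h^{(k)}(u-s)$ to decay faster than the growth $s^{m-1}$ of $G$, so that the boundary terms at $s=\infty$ vanish.
\item Run your integration by parts in physical space. There you only need two facts.
\begin{itemize}
\item $W(x,s)=s^{2m-1}\Psi_x(s)$ with $\Psi_x$ regular and $\Psi_x(0)=\frac{\Gamma(\gamma+1)\Gamma(\mu+1)}{2\Gamma(m+\frac12)}f(x)$; this follows from the substitution $t=sv$.
\item $(\partial_s^2-\Delta_{a,x})^mW=0$ for $s>0$. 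Iterating \eqref{intw} gives $\Delta_{a,x}^lW=\int_0^s(s^2-t^2)^{\mu}(B_\gamma^l\Phi_x)(t)t^{2\gamma+1}dt$. This turns the claim into a one-variable identity in $s$, which can be checked on even monomials, or equivalently coefficientwise in $\varrho$ on $j_\gamma(t\varrho)$, where it reads $(\partial_s^2+\varrho^2)^mG=0$.
\end{itemize}
\end{itemize}
This iteration needs $\Delta_a^lf$ to be $C^2$ with vanishing normal derivatives for $l<m$. The condition $\partial_{x_i}f|_{x_i=0}=0$ alone does not give this: for $d=1$, $f(x)=x^3$ near $0$ yields $B_\alpha^2f\sim x^{-1}$. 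So the hypotheses of the quoted lemma are themselves loose at this point.
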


\medskip

Together with the uniform convergence of series in \eqref{chrosph} Lemma \ref{spheinv} immediately implies the next formula.

\begin{theorem}\label{tcccc} Let $f\in W^{m,1}_{\Delta_a}\cap C^{2m}(\mathbb{R}^d_+)$ with $\frac{\partial}{\partial x_i}f|_{x_i=0}=0$, $i=1, \dots , d$. Assume that $\mathrm{supp}\widehat{M_{sph,a}(f)(x,\cdot)}\subset [0,\nu]$. Let $h\in C^{2m}(\mathbb{R})$ an appropriate function as above. Then with the notation above we have
\begin{equation}\label{ron}h(u)f(x)=c(\nu,m,\gamma)\sum_{l=0}^m\binom{m}{l}(-1)^l\sum_{k=0}^\infty c_k^{(l)}(x)b_{k,m-l}(u),\end{equation}
where
$$c(\nu,m,\gamma)=\frac{\sqrt{\pi}\nu^{\gamma+1}\left(m-\gamma-\frac{1}{2}\right)}{\Gamma(m)\Gamma\left(m+\frac{1}{2}\right)\Gamma^4(\gamma+1)(\gamma+1)2^{m+4\gamma-\frac{1}{2}}},$$
$$ c_k^{(l)}(x)=K^k_B(M_{sph,a}(\Delta_{a,x} f)(x,\cdot))(0),$$
$b_{k,m-l}(u)$ are the Jacobi-Fourier coefficient on $[0,\nu]$ of
$$g_{u,m-l}(\tau):=\int_0^\infty h^{(2(m-l))}(u-s)s^{m-\frac{1}{2}}j_{m-\frac{1}{2}}(s\tau)ds.$$
\end{theorem}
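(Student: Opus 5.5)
The plan is to start from the inversion formula of Lemma \ref{spheinv}, expand the operator $\left(\frac{\partial^2}{\partial u^2}-\Delta_{a,x}\right)^m$ binomially, and then replace each occurrence of the spherical mean by its chromatic expansion \eqref{chrosph}. Since $\partial_u$ and $\Delta_{a,x}$ commute and act on different variables,
$$\left(\partial_u^2-\Delta_{a,x}\right)^m=\sum_{l=0}^m\binom{m}{l}(-1)^l\partial_u^{2(m-l)}\Delta_{a,x}^l .$$
I move $\partial_u^{2(m-l)}$ onto $h$, giving $h^{(2(m-l))}(u-s)$, which is allowed because $h\in C^{2m}$. I move $\Delta_{a,x}^l$ inside the spherical mean using \eqref{BT}, giving $M_{sph,a}(\Delta^l_{a,x}f)(x,t)$; this is justified by $f\in W^{m,1}_{\Delta_a}\cap C^{2m}$. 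This produces the factor $\binom{m}{l}(-1)^l$ in \eqref{ron} and shows where the coefficients $c_k^{(l)}(x)$ come from (the $\Delta_{a,x}f$ there should be read as $\Delta_{a,x}^lf$). The band-limitation hypothesis transfers to $\Delta^l_{a,x}f$. By \eqref{BT} and \eqref{Hsphm}, applying $\Delta_{a,x}$ only multiplies the Hankel transform in $x$ and does not change the $t$-support. So \eqref{chrosph} applies to each $M_{sph,a}(\Delta^l_{a,x}f)(x,\cdot)$ with $\nu$ unchanged.

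The next step is to evaluate the inner $t$-integral on the chromatic side. Writing $M_{sph,a}(\Delta^l f)(x,t)=c(\gamma)\int_0^\nu \widehat{\Phi^{(l)}_x}(\tau)j_\gamma(t\tau)\tau^{2\gamma+1}d\tau$ and swapping integrals reduces everything to the Sonine-type integral
$$\int_0^s(s^2-t^2)^{\mu}j_\gamma(t\tau)t^{2\gamma+1}dt,\qquad \mu=\tfrac{2m-d-|a|-1}{2}=m-\gamma-\tfrac32 .$$
By \eqref{H(Imu)} with $\alpha=\gamma$ this integral equals a constant times $s^{\gamma+\mu+1}j_{\gamma+\mu+1}(s\tau)=s^{m-\frac12}j_{m-\frac12}(s\tau)$. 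Together with the outer $s$-integral this gives exactly $g_{u,m-l}(\tau)$. So the $l$-th term equals $c\int_0^\nu \widehat{\Phi^{(l)}_x}(\tau)g_{u,m-l}(\tau)\tau^{2\gamma+1}d\tau$. I then insert the Jacobi expansion $\widehat{\Phi^{(l)}_x}=\sum_k c_k^{(l)}(x)p_k$ and use Parseval in $L^2_{2\gamma+1}([0,\nu])$. This turns the term into $\sum_k c_k^{(l)}(x)b_{k,m-l}(u)$, with $b_{k,m-l}(u)$ the Jacobi--Fourier coefficients of $g_{u,m-l}$. Equivalently, one can integrate \eqref{chrosph} termwise using its uniform convergence.

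Finally, I collect the constants: the constant from Lemma \ref{spheinv}, $c(\gamma)$ from \eqref{bi}, and $\frac{2^\gamma\Gamma(\gamma+1)2^\mu\Gamma(\mu+1)}{\Gamma(m+\frac12)}$ from \eqref{H(Imu)}. Simplifying with $\Gamma(\mu+1)=\Gamma(m-\gamma-\frac12)$ against the factor $\Gamma\left(\frac{2m-d-|a|+1}{2}\right)$ should give $c(\nu,m,\gamma)$. This bookkeeping is routine but error-prone.

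The main obstacle is justifying the interchanges of the differentiations with the $s,t,\tau$ integrals and with the infinite sum in $k$. For the sum in $k$, Parseval only needs $g_{u,m-l}\in L^2_{2\gamma+1}([0,\nu])$, which holds when the $s$-integral converges locally uniformly in $\tau$; this is the ``appropriate $h$'' assumption. For the order of integration I would rely on Fubini, using compact $\tau$-support and $|j|\le1$ from \eqref{j1}, plus the uniform convergence of \eqref{chrosph} noted after it.
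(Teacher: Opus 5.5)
Your argument uses the same ingredients as the paper's proof, but in the reverse order. Those ingredients are Lemma \ref{spheinv}, the integral \eqref{H(Imu)}, the orthonormal expansion of the $t$-Hankel transform on $[0,\nu]$, and the binomial splitting of $(\partial_u^2-\Delta_{a,x})^m$ with $\Delta_{a,x}^l$ absorbed into the coefficients. The paper first substitutes the uniformly convergent series \eqref{chrosph} and integrates term by term. It then swaps the $k$-sum with the $s$-integral by Cauchy--Schwarz, and only at the end differentiates the series term by term via $\Delta_a^lc_k(x)=c_k^{(l)}(x)$. You differentiate the closed integral first and expand only at the end, by Parseval in $L^2_{2\gamma+1}([0,\nu])$. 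Your order is cleaner. For fixed $(x,u)$, Parseval gives convergence of $\sum_k c_k^{(l)}(x)b_{k,m-l}(u)$ immediately, and you avoid termwise differentiation of an infinite series, which the paper covers only by ``choosing $h$ appropriately''. You also make explicit that band-limitation passes to $\Delta_a^lf$, which the paper leaves implicit. The quickest justification is Proposition \ref{intw0}: the $t$-Hankel transform of $(B_\gamma)_tM_{sph,a}(f)(x,\cdot)$ is $-\tau^2$ times that of $M_{sph,a}(f)(x,\cdot)$.

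There is, however, a genuine error where you invoke \eqref{H(Imu)}, and the paper's proof makes the same one: for the normalized $j_\alpha$ of \eqref{B} that formula is false. Already at $y=0$ the left side equals $\int_0^a(a^2-x^2)^\mu x^{2\alpha+1}dx=\frac{\Gamma(\alpha+1)\Gamma(\mu+1)}{2\Gamma(\alpha+\mu+2)}a^{2(\alpha+\mu+1)}$, not a multiple of $a^{\alpha+\mu+1}$. Converting Sonine's integral $\int_0^a x^{\alpha+1}(a^2-x^2)^\mu J_\alpha(xy)\,dx=2^\mu\Gamma(\mu+1)a^{\alpha+\mu+1}y^{-\mu-1}J_{\alpha+\mu+1}(ay)$ gives, with $\mu=m-\gamma-\frac{3}{2}$,
\[
\int_0^s(s^2-t^2)^\mu j_\gamma(t\tau)t^{2\gamma+1}\,dt=\frac{\Gamma(\gamma+1)\Gamma(\mu+1)}{2\Gamma\left(m+\frac{1}{2}\right)}\,s^{2m-1}j_{m-\frac{1}{2}}(s\tau).
\]
For instance, take $d=1$, $a=1$ (so $\gamma=0$, $m=1$, $\mu=-\frac{1}{2}$) and $\tau=0$. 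Then $\int_0^s t(s^2-t^2)^{-1/2}dt=s$, not a multiple of $s^{1/2}$.

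So your computation does not produce ``exactly $g_{u,m-l}$''. Carried out correctly, it yields \eqref{ron} with the kernel $s^{2m-1}j_{m-\frac{1}{2}}(s\tau)$ in place of $s^{m-\frac{1}{2}}j_{m-\frac{1}{2}}(s\tau)$. This power of $s$ sits inside the integral against $h^{(2(m-l))}(u-s)$, so it cannot be absorbed into the constant.

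The constant changes as well. Take $c_k^{(l)}(x)=\int_0^\nu p_k(\tau)\widehat{\Phi^{(l)}_x}(\tau)\tau^{2\gamma+1}d\tau$ and the inversion constant $2^{-2\gamma}/\Gamma^2(\gamma+1)$ from \eqref{bi}. Your cancellation of $\Gamma(\mu+1)$ against $\Gamma\left(\frac{2m-d-|a|+1}{2}\right)$ is correct, and it leaves $\frac{\sqrt{\pi}}{2^{2m+2\gamma-1}\Gamma(m)\Gamma\left(m+\frac{1}{2}\right)\Gamma^2(\gamma+1)}$. No factor $\nu^{\gamma+1}$ can arise on your route, so the stated $c(\nu,m,\gamma)$ will not come out either.
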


\proof Since $\sum_{k=0}^\infty c_k(x)\varphi_k(t)$ is uniformly convergent, with $\mu:=\frac{2m-d-|a|-1}{2}$ we have

$$I:=\int_0^\infty h(u-s)\int_0^s (s^2-t^2)^{\mu}M_{sph,a}(f)(x,t)t^{2\gamma+1}dtds$$ $$=c(\nu,d,a)\int_0^\infty h(u-s)\sum_{k=0}^\infty c_k(x)\int_0^s (s^2-t^2)^\mu \varphi_k(t)t^{2\gamma+1}dtds.$$
Computing the inner integral we get
$$\int_0^s (s^2-t^2)^\mu \varphi_k(t)t^{2\gamma+1}dt=\int_0^s (s^2-t^2)^\mu\int_0^\nu p_k(\tau)j_\gamma(t\tau)\tau^{2\gamma+1}d\tau t^{2\gamma+1}dt$$ $$=c(\gamma,\mu)s^{\gamma+\mu+1}\int_0^\nu p_k(\tau)j_{\gamma+\mu+1}(s\tau)\tau^{2\gamma+1}d\tau,$$
cf. \eqref{H(Imu)}. Thus
$$I=c(\nu,d,a\gamma,\mu)\int_0^\infty h(u-s)s^{\gamma+\mu+1}\sum_{k=0}^\infty c_k(x)d_k(s)ds,$$
where $d_k(s)$ are the Jacobi-Fourier coefficients of $j_{\gamma+\mu+1}(s\tau)$ with respect to $\{p_k\}$. Since $\|j_{\gamma+\mu+1}(s\cdot)\|_{2,w,[0,\nu]}\le c\nu^{2(\gamma+1)}$, independently of $s$, $\|d_k(s)\|_2\le K$ independently of $s$. Since $\widehat{M_{sph,a}(f)_x}\in L^\infty([0,\nu])$, it is also in  $L^2_w([0,\nu])$, thus $\|c_k(x)\|_2\le K(x)$. So by the Cauchy-Schwartz inequality
$$I=c(\nu,d,a\gamma,\mu)\sum_{k=0}^\infty c_k(x)\int_0^\infty h(u-s)s^{\gamma+\mu+1}\int_0^\nu p_k(\tau)j_{\gamma+\mu+1}(s\tau)\tau^{2\gamma+1}d\tau ds.$$
We can choose $h$ such that the series is uniformly convergent. To apply the differential operator to the series first we observe
$$\left(\frac{\partial^2}{\partial u^2}-\Delta_{a,x}\right)^m (c_k(x)h(u-s))=\sum_{l=0}^m\binom{m}{l}(-1)^l\Delta_a^lc_k(x)h^{(2(m-l)}(u-s),$$
where $h'(u-s)=\frac{\partial}{\partial u}$. Recalling the definition of the weighted spherical mean, since $f\in W^{m,1}_{\Delta_a}$, we have
$$\Delta_a^l c_k(x)=c_k(\widehat{M_{sph,a}(\Delta_a^l f)_x})=:c_k^{(l)}(x),$$
i.e. the $k$th Jacobi-Fourier coefficient of $\widehat{M_{sph,a}(\Delta_a^l f)_x}$ with respect to $\{p_k\}$. By the assumptions $\{c_k^{(l)}(x)\} \in l_2$ as well. Let
$$b_{k,m-l}(u):=\int_0^\nu p_k(\tau)\int_0^\infty h^{(2(m-l))}(u-s)s^{\gamma+\mu+1}j_{\gamma+\mu+1}(s\tau)ds\tau^{2\gamma+1}d\tau,$$
the $k$th Fourier coefficient of $g_{u,m-l}(\tau):=\int_0^\infty h^{(2(m-l))}(u-s)s^{\gamma+\mu+1}j_{\gamma+\mu+1}(s\tau)ds$. With an appropriate $h$, $g_{u,m-l}(\tau)\in L^2_{2\gamma+1}([0,\nu])$. Indeed, by the Minkowski inequality we have
$$\|g_{u,l}(\tau)\|_{2,w,[0,\nu]}\le \int_0^\infty |h^{(2(m-l))}(u-s)|s^{\gamma+\mu+1}\sqrt{\int_0^\nu j_{\gamma+\mu+1}(s\tau)^2\tau^{2\gamma+1}d\tau} ds$$ $$\le c\nu^{\gamma+1}\int_0^\infty |h^{(2(m-l))}(u-s)|s^{\gamma+\mu+1}ds.$$
If the last integral is convergent, for all $0\le l \le m$ we have that $\|c_k^{(l)}(x)\|_2\le C(x)$ and $\|b_{k,m-l}(u)\|_2\le K(u)$, and by the assumptions $C(x)$ and $K(u)$ are continuous. Thus
$$\sum_{k=0}^\infty c_k^{(l)}(x)b_{k,m-l}(u)$$
are locally uniformly convergent for all $0\le l \le m$. Finally,
$$h(u)f(x)=\left(\frac{\partial^2}{\partial u^2}-\Delta_{a,x}\right)^m c I=c\sum_{l=0}^m\binom{m}{l}(-1)^l\sum_{k=0}^\infty c_k^{(l)}(x)b_{k,m-l}(u),$$
which agrees with \eqref{ron}.

\medskip

\noindent {\bf Example.}

Let $h(u)=e^u$. Then $h^{(2(m-l))}(u-s)=e^u e^{-s}$ Thus
$$g_{u,m-l}(\tau)=e^u\int_0^\infty e^{-s}s^{\gamma+\mu+1}j_{m-\frac{1}{2}}(s\tau)ds$$ $$=e^u\frac{2^{m-\frac{1}{2}}\Gamma\left(m+\frac{1}{2}\right)}{\tau^m}\int_0^\infty \frac{e^{-s}}{\sqrt{s}}J_{m-\frac{1}{2}}(s\tau)\sqrt{s\tau}ds$$ $$=e^u2^{m-\frac{1}{2}}\Gamma\left(m+\frac{1}{2}\right)\frac{1}{\sqrt{1+\tau}(1+\sqrt{1+\tau^2})^{m-\frac{1}{2}}}=e^ug_m(\tau),$$
see \cite[(5.1)]{o}. Thus, with the notation of Theorem \ref{tcccc} we have
$$f(x)=c(\nu,m,\gamma)\sum_{k=0}^\infty b_{k}\sum_{l=0}^m\binom{m}{l}(-1)^l c_k^{(l)}(x),$$
where $b_{k}$ are the Jacobi-Fourier coefficients of $g_m$.

\section{Chromatic expansion, weighted spherical mean at zero}

Below we give some simplifications of the above discussed method. Let us suppose that $\hat{f}$ is supported on $\overline{B(0,\nu)}_+$ and let $w$ be a weight function on $[0,\nu]$, i.e. $w\ge 0$ and $w$ has finite moments. Then  $\{p_k\}_{k=0}^\infty$, is the system of orthonormal  polynomials on $[0,\nu]$ with respect to the weight $w_1(\varrho):=w(\varrho)\varrho^{n+|a|-1}$. Recalling $\check{w}:=\mathcal{H}_a^{-1}(w(|\xi|))$, in view of \eqref{CH} and \eqref{cs}, \eqref{chro2} is modified as
$$K_{B,w}^k(f)(x):=K_{B}^k(f)*_a\check{w}(x)$$ $$=
p_k((-\Delta_a)^{\frac{1}{2}})f*_a\check{w}(x)=\int_{B(0,\nu)_+}\hat{f}(\xi)p_k(|\xi|)(\xi)w(|\xi|)\mathbb{j}_a(x,\xi)x^adx.$$
Recall the notation $|\xi|=\varrho\psi$. Continuing the chain of ideas started at \eqref{elso},  by \eqref{trid}, in shperical coordinates, at zero it becomes
$$K_{B,w}^k(f)(0)$$ $$=|S^d_{1+}|_a\int_0^\nu p_k(\varrho)w(\varrho)\frac{1}{|S^d_{1+}|_a}\int_{S^d_{1+}}\hat{f}(\varrho\psi)\psi^adS(\psi)\varrho^{n+|a|-1}d\varrho$$ \begin{equation}\label{chro3}=|S^d_{1+}|_a\int_0^\nu p_k(\varrho)M_{sph,a}(\hat{f})(0,\varrho)w(\varrho)\varrho^{n+|a|-1}d\varrho.\end{equation}
 Thus, \eqref{chro3} implies that
\begin{equation}\label{chro4}\frac{1}{|S^d_{1+}|_a}p_k((-\Delta_a)^{\frac{1}{2}})f*_a\check{w}(0)=c_k(M_{sph,a}(\hat{f})(0,\cdot)),\end{equation}
i.e. the $k$-th coefficient in the expansion of $M_{sph,a}(\hat{f})(0,\cdot)$ with respect to the orthonormal system  $\{p_k\}$.
Let us define
\begin{equation}\label{chro5} \varphi(r):=\mathcal{H}_a^{-1}\left(w(|\xi|)\chi_{B(0,\nu)_+}(\xi)\right)(r);\end{equation}
$$\varphi_k(r):=K_{B}^k(\varphi)(r):=\mathcal{H}_a^{-1}\left(p_k(|\xi|)w(|\xi|)\chi_{B(0,\nu)_+}(\xi)\right)(r), \ws \ws k=0,1,\dots .$$
We also have
$$\varphi_k(r)=\tilde{\varphi}_k*_a\check{w}(r); \ws \ws \tilde{\varphi}_k:=\mathcal{H}_a^{-1}(p_k).$$
In view of \eqref{rh} these are radial functions. Thus we can define the weighted Bessel-chromatic expansion of $M_{sph,a}(f)(0,r)$ (i.e. the weighted spherical Bessel-chromatic expansion of $f$). Notice, that $M_{sph,a}(f)(0,r)$ is just equal to the effect of the weighted spherical mean of the weighted Dirac-delta distribution at $f$. In general, the chromatic derivatives are understood in distributional sense, as above. If $f$ fulfils the conditions of the inversion theorem, we have
\begin{equation}\label{chro60}M_{sph,a}(f)(0,\cdot)*_a\check{w}= S^0CE_{B,w}(f)(r):=\frac{1}{|S^d_{1+}|_a}\int_{S^d_{1+}}f(r\theta)\theta^adS(\theta)*_a\check{w}\end{equation} $$=(\mathcal{H}_a^{-1}(M_{sph,a}(\hat{f})(0,\varrho)w(\varrho))(r)$$
$$=\frac{1}{|S^d_{1+}|_a}\sum_{k=0}^\infty \left(p_k((-\Delta_a)^{\frac{1}{2}})(f)*_a\check{w}(0)\right)\left(\tilde{\varphi}_k(r)*_a\check{w}\right)$$ $$=\frac{1}{|S^d_{1+}|_a}\sum_{k=0}^\infty K_{B,w}^k(f)(0)K_{B}^k(\varphi)(r).$$
Since the Hankel transform maps $L^2_a$ to $L^2_a$,  in general we have at least that
$$\lim_{n\to 0}\int_0^\infty \left(\left(M_{sph,a}(f)(0,r)-\sum_{k=0}^n c_k\tilde{\varphi}_k\right)*_a\mathcal{H}_{2\gamma+1}^{-1}(\sqrt{w})\right)^2 r^{2\gamma+1}dr=0.$$
In our bandlimited case, as above, Cauchy-Swartz inequality implies $L^1_{w_1}$-convergence, and the inverse Hankel transform ensures the uniform convergence in \eqref{chro60}.
\medskip

Although the previous chain of ideas ensure uniform convergence, it does not give any information about the speed of convergence.

\medskip

\subsubsection{Without weight}
De la Vall\'ee Poussin means (cf.\eqref{del}) and the best approximating polynomials (cf. \eqref{bestpol} have proven to be effective tools for estimating the rate of convergence. Indeed, de la Vall\'ee Poussin mean, $V_n$  preserve polynomials of degree at most $n$, consequently, the following lemma holds.

\medskip

Recall the definition of the Jacobi weight function on $[-1,1]$, cf \eqref{jacw}. $V_n^{(\alpha,\beta)}$ is a de la Vall\'ee Poussin mean of the Jacobi-Fourier expansion with respect to $w^{(\alpha,\beta)}$. $\phi(x)=\sqrt{1-x^2}$ and we define the Jacobi-Sobolev space  $W^r_p(w^{(\alpha,\beta)})$.
\begin{equation}\label{W} W^r_p(w^{(\alpha,\beta)}):=\{f\in C^{r-1}[-1,1] : f^{(r)}\in L^p_{w^{(\alpha,\beta)}}\}, \ws \ws 1\le p<\infty,\end{equation}
and $ W^r_\infty(w^{(\alpha,\beta)})=C^{r-1}[-1,1]$, see \cite{mt}.

\begin{lemma}\label{ve}\cite[Theorem 2.1]{x}, \cite[Theorem 3.6]{mt} Let $1\le p<\infty$,  or $f\in C[-1,1]$ if $p=\infty$.  Then
$$\|f-V_n^{(\alpha,\beta)}\|_{p,w^{(\alpha,\beta)}}\le cE_n(f)_{p,w^{(\alpha,\beta)}},$$
and if $f\in W^s_p(w^{(\alpha,\beta)})$ for $1\le p<\infty$,  or $f\in C^s[-1,1]$ if $p=\infty$, we have
$$E_n(f)_{p,w^{(\alpha,\beta)}}\le \frac{c}{n^s}\|\phi^rf^{(s)}\|_{p,w^{(\alpha,\beta)}}.$$
\end{lemma}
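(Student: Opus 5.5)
The statement consists of two estimates. I would prove them separately and then chain them. Both are cited, to \cite[Theorem 2.1]{x} and \cite[Theorem 3.6]{mt}, so what follows is a reconstruction of the standard argument.

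\emph{First inequality.} I would use the two structural properties of the de la Vall\'ee Poussin means. First, since $V_n=\frac{1}{n}\sum_{k=n}^{2n-1}S_k$ and each $S_k$ with $k\ge n$ reproduces polynomials of degree at most $n$, we have $V_n^{(\alpha,\beta)}p=p$ for every $p\in\Pi_n$. Second, I need the uniform bound
$$\|V_n^{(\alpha,\beta)}g\|_{p,w^{(\alpha,\beta)}}\le C\|g\|_{p,w^{(\alpha,\beta)}},$$
with $C$ independent of $n$. With $P_n$ the best approximating polynomial, these give
$$f-V_nf=(f-P_n)-V_n(f-P_n),$$
hence $\|f-V_nf\|\le (1+C)E_n(f)_{p,w^{(\alpha,\beta)}}$.

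The uniform bound is the real content and the main obstacle. I would write $V_n$ as an integral operator whose kernel is the delayed-mean kernel $\frac{1}{n}\sum_{k=n}^{2n-1}K_k(x,y)$. Using the Christoffel--Darboux formula, this kernel can be rewritten as a smooth-cutoff kernel $\sum_k \eta(k/n)p_k(x)p_k(y)$. Sharp Jacobi kernel estimates then show that its $L^1$-norm in $y$, measured with respect to $w^{(\alpha,\beta)}$, is bounded uniformly in $x$. This gives the $p=1$ and $p=\infty$ cases, and Riesz--Thorin interpolation gives $1<p<\infty$. For $p=\infty$ the hypothesis $f\in C[-1,1]$ guarantees that $E_n(f)_\infty\to0$, so the statement is meaningful.

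\emph{Second inequality.} This is a weighted Jackson theorem with the Ditzian--Totik step-weight $\phi(x)=\sqrt{1-x^2}$. I would first prove the case $s=1$:
$$E_n(f)_{p,w^{(\alpha,\beta)}}\le \frac{c}{n}\|\phi f'\|_{p,w^{(\alpha,\beta)}}.$$
The route is to bound the weighted $\phi$-modulus $\omega_\phi(f,1/n)$ by $\frac{c}{n}\|\phi f'\|$, using
$$\Delta_{h\phi}f(x)=\int_{-h\phi/2}^{h\phi/2}f'(x+u)\,du,$$
and then to invoke the Jackson estimate $E_n(f)\le c\,\omega_\phi(f,1/n)$. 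Near $\pm1$ the Jacobi weight is doubling, which makes the weighted modulus manageable; Mastroianni--Totik treat exactly such doubling weights.

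For general $s$ I would iterate. Take $q\in\Pi_{n-1}$ near-best for $f'$, and apply the $s=1$ case to $f-\int q$. Repeating, the weight $\phi^{r}$ accumulates from the factors $\phi$ gained at each step; the statement's $\phi^r$ should be read as $\phi^s$. Combining the two parts yields the convergence rate $\|f-V_nf\|\le c\,n^{-s}\|\phi^sf^{(s)}\|$ that the later results use.
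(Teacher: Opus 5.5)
The paper gives no argument here; it only cites \cite[Theorem 2.1]{x} and \cite[Theorem 3.6]{mt}. Your structure for the first inequality is the standard one: reproduction of $\Pi_n$ plus a uniform operator bound gives near-best approximation. The gap is that the uniform bound fails for the operator you use.

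The kernel of $\frac1n\sum_{k=n}^{2n-1}S_k$ is $\sum_j\eta(j/n)p_j(x)p_j(y)$ with a \emph{trapezoidal} multiplier: $\eta=1$ on $[0,1]$ and $\eta$ is linear down to $0$ on $[1,2]$. This is only a rearrangement of the double sum. The Christoffel--Darboux formula does not change it, and $\eta$ stays merely Lipschitz, not smooth.

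Localized bounds such as $|K_n(x,y)|\le c_\kappa n\,(W_n(x)W_n(y))^{-1/2}(1+n|\arccos x-\arccos y|)^{-\kappa}$, with $W_n(x)=(1-x+n^{-2})^{\alpha+\frac12}(1+x+n^{-2})^{\beta+\frac12}$, are what make $\sup_x\int|K_n(x,y)|w^{(\alpha,\beta)}(y)dy$ bounded. They require $\eta\in C^{\kappa}$ with $\kappa$ large in terms of $\alpha,\beta$.

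With the trapezoid, $V_n=2\sigma_{2n}-\sigma_n$ behaves like the Fejér means. The $(C,1)$ means of Jacobi series are uniformly bounded on $C[-1,1]$, and equivalently (the kernel is symmetric) on $L^1_{w^{(\alpha,\beta)}}$, only when $\max(\alpha,\beta)<\frac12$. Concretely, the kinks of $\eta$ at $1$ and $2$ put terms of size $n^{\alpha-\frac12}\theta^{-\alpha-\frac52}$ into $V_n(1,\cos\theta)$, oscillating at frequencies $n$ and $2n$, and these do not cancel. So the Lebesgue function at $x=1$ grows like $n^{\alpha-\frac12}$ for $\alpha>\frac12$ (logarithmically at $\alpha=\frac12$), and likewise at $x=-1$ with $\beta$.

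This is exactly the regime the paper needs. It applies the lemma with $p=1$ and the Jacobi pair $(0,n+|a|-1)$ coming from $\tilde w(\varrho)=\varrho^{n+|a|-1}$, which is $(0,11)$ in the numerical example. Your endpoint bounds at $p=1$ and $p=\infty$, and the Riesz--Thorin step built on them, are therefore not available in general. Note that the paper's own definition \eqref{del} is also the trapezoidal one.

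To repair this, take $V_nf=\sum_k\eta(k/n)c_k(f)p_k$ with $\eta\in C^\infty[0,\infty)$, $\eta=1$ on $[0,1]$ and $\eta=0$ on $[2,\infty)$. This is the kind of near-best operator treated in \cite{x}. Reproduction of $\Pi_n$ is unaffected, and the localized kernel bound then gives uniform boundedness for all $\alpha,\beta>-1$ and $1\le p\le\infty$. Alternatively, keep the delayed means but restrict to $\max(\alpha,\beta)<\frac12$.

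Your Jackson part is acceptable as a sketch, and reading $\phi^r$ as $\phi^s$ is correct. In the iteration (for $p<\infty$), choose the near-best $q$ for $f'$ in the norm $\|\phi\,\cdot\,\|_{p,w^{(\alpha,\beta)}}$, that is, in the shifted Jacobi class $w^{(\alpha+p/2,\beta+p/2)}$, and apply the inductive hypothesis in that class.
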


\medskip

Let $\tilde{w}(\varrho)=\tilde{w}^{(0,n+|a|-1)}(\varrho)=\varrho^{n+|a|-1}$ the Jacobi weight transformed to the interval  $[0,\nu]$ and $\tilde{\phi}(\varrho)=\sqrt{\varrho(\nu-\varrho)}$ and $\{\tilde{p}_k\}$ is the modified orthonormal Jacobi system, cf. \eqref{modjac}.\\
Thus, by Lemma \ref{ve} $\| M_{sph,a}(\hat{f})^{(s)}-\tilde{V}_n^{(0,n+|a|-1)}\|_{1,\tilde{w}}\le c  \frac{c}{n^{s}}\|\tilde{\phi}^s M_{sph,a}(\hat{f})^{(s)}(0,\cdot)\|_{1,\tilde{w}}$. Thus, if the right-hand side is bounded, by the inverse Hankel transform we get the next result.

\begin{theorem}\label{t3} Let  $f\in L^1_a(\mathbb{R}^d_+)$ is of locally bounded variation and continuous, furthermore let us assume that $\hat{f}$ is supported on $B(0,\nu)$. If  $\tilde{\phi}^s M_{sph,a}(\hat{f})^{(s)}(0,\cdot)\in L^1_{\tilde{w}}([0,\nu])$, then
$$\|M_{sph,a}(f)(0,\cdot)-V_n(S^0SE_B(f))\|_\infty\le \frac{c}{n^{s}}\|\tilde{\phi}^s M_{sph,a}(\hat{f})^{(s)}(0,\cdot)\|_{1,\tilde{w}}.$$
\end{theorem}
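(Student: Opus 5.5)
The estimate is proved on the image side and then carried back by the inverse Hankel transform of radial functions. Set $F(\varrho):=M_{sph,a}(\hat f)(0,\varrho)$ on $[0,\nu]$. By \eqref{szm} and the hypotheses, $F=\mathcal{H}_a(M_{sph,a}(f)(0,\cdot))$. Since $\hat f$ is supported in $B(0,\nu)$ and $f$ satisfies the conditions of Proposition \ref{hank}, formula \eqref{rh} gives, for every $r\ge0$,
$$M_{sph,a}(f)(0,r)=c(d,a)\int_0^\nu F(\varrho)j_\gamma(r\varrho)\varrho^{2\gamma+1}d\varrho .$$
In the unweighted case $w\equiv1$ the coefficients $K^k_B(f)(0)$ are, by \eqref{chro4}, up to the constant $|S^d_{1+}|_a$ the Fourier coefficients of $F$ in the transformed Jacobi system $\{\tilde p_k\}$ of \eqref{modjac}, orthonormal on $[0,\nu]$ with respect to $\tilde w(\varrho)=\varrho^{2\gamma+1}$. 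The functions $K^k_B(\varphi)$ are the inverse Hankel transforms of $\tilde p_k\chi_{[0,\nu]}$.

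The first step is to check that summation commutes with the inverse Hankel transform. Every partial sum of $S^0CE_B(f)$ is a finite linear combination, so it equals $\mathcal{H}^{-1}$ applied to the corresponding partial sum of the Jacobi--Fourier series of $F$. Since $V_n$ is a fixed finite linear combination of partial sums, we get
$$V_n(S^0CE_B(f))(r)=c(d,a)\int_0^\nu \tilde V_n(F)(\varrho)j_\gamma(r\varrho)\varrho^{2\gamma+1}d\varrho,$$
where $\tilde V_n$ is the de la Vallée Poussin operator of the transformed Jacobi expansion with parameters $(0,2\gamma)$, i.e. $(0,n+|a|-1)$ in the paper's notation.

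Subtracting and using $|j_\gamma|\le1$ from \eqref{j1} gives, uniformly in $r$,
$$\|M_{sph,a}(f)(0,\cdot)-V_n(S^0CE_B(f))\|_\infty\le c\,\|F-\tilde V_n(F)\|_{1,\tilde w}.$$

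Next we apply Lemma \ref{ve} with $p=1$, transported from $[-1,1]$ to $[0,\nu]$ by the affine change of variables $\varrho=\nu(x+1)/2$. This map turns $w^{(0,2\gamma)}$ into a constant multiple of $\tilde w$ and $\phi$ into a multiple of $\tilde\phi/\nu$, and only affects constants. It gives first $\|F-\tilde V_n F\|_{1,\tilde w}\le cE_n(F)_{1,\tilde w}$. Lemma \ref{ve} requires $F\in W^s_1(\tilde w)$, which the hypothesis $\tilde\phi^sF^{(s)}\in L^1_{\tilde w}$ is meant to supply. Its second part then gives $E_n(F)_{1,\tilde w}\le c\,n^{-s}\|\tilde\phi^sF^{(s)}\|_{1,\tilde w}$, and chaining the inequalities yields the theorem.

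The main obstacle is justifying the commutation of $V_n$ with $\mathcal{H}^{-1}$ and the pointwise inversion, i.e. that $F\in L^1_{\tilde w}\cap L^2_{\tilde w}$ so that the Jacobi coefficients make sense and the inversion holds at every $r$. I would handle this via $\hat f\in L^\infty$ (from $f\in L^1_a$) and the compact support, which make $F$ bounded on a finite interval. A secondary point is matching the regularity class required by Lemma \ref{ve} to the stated weighted-derivative hypothesis. I would take that hypothesis as implicitly including $F\in C^{s-1}$, as the paper's definition \eqref{W} suggests.
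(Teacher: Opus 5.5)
Your proposal is correct and follows the paper's own argument: apply Lemma \ref{ve} with $p=1$ to $M_{sph,a}(\hat f)(0,\cdot)$ in the transformed Jacobi system on $[0,\nu]$, then turn the weighted $L^1$ bound into a uniform bound through the inverse Hankel transform, using $|j_\gamma|\le 1$. Your remarks on commuting $V_n$ with $\mathcal{H}^{-1}$ and on the implicit $C^{s-1}$ regularity supply details the paper leaves unstated; the one slip is that, since $\tilde w(\varrho)=\varrho^{2\gamma+1}$ and $n+|a|-1=d+|a|-1=2\gamma+1$, the Jacobi parameters are $(0,2\gamma+1)$, not $(0,2\gamma)$.
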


\medskip

\subsubsection{With an extra weight} Below we give estimate the rate of convergence of the Ces\`aro means. To this we introduce the generalized Jacobi weights, see e.g. \cite{mave}.

\note
Recall $\phi(x):= \sqrt{1-x^2}$. Let $m\in\mathbb{N}$. $w_J\in GJ$ is a generalized Jacobi weight on $[-1,1]$ if
$$w_J(x)=H(x)\phi^{-1}(x)w_0(\sqrt{1-x})w_{m+1}(\sqrt{1+x})\prod_{r=1}^m w_r(|x-t_r|),$$
where $t_r\in(-1,1)\ws r=1,\dots,m$; $w_r(\delta)=\prod_{s=1}^{l_r}(\omega_{r,s}(\delta))^{\gamma_{r,s}}$, where $\gamma_{r,s}$ are positive real numbers, $\omega_{r,s}$ are concave moduli of continuity, and $H>0$ such that $H, \frac{1}{H}\in L_\infty$. Furthermore we assume that around zero $\int_0^\delta w_r(\tau)d\tau=O(\delta w_r(\delta))$, $r=0,\dots , m+1$ and the usual modulus of continuity of $ H(\cos\theta)$ satisfies\\ $\omega(H(\cos\theta),\delta)_\infty \delta^{-1}\in L^1_{[0,1]}$.

\medskip

For the corresponding Christoffel function we have the following estimation.

\begin{lemma}\label{l88}\cite[Theorem 3.1, Remark 1]{mave}, \cite[Theorem A]{mato} Let $w\in GJ$. Then
\begin{equation}\label{mavec}\lambda_n(w^2,x)\sim \frac{1}{n}w_0\left(\sqrt{1-x}+\frac{1}{n}\right)w_{m+1}\left(\sqrt{1-x}+\frac{1}{n}\right)\prod_{r=1}^m  w_r\left(|x-t_r|+\frac{1}{n}\right)\end{equation}
uniformly in $x$ and $n$.\\
If $w\in GJ$ has no inner zeros, then
\begin{equation}\label{matoe} E_n(f)_w\le \frac{c}{n^s}(\|wf^{(s)}\phi^s\|_\infty +\|wf\|_\infty).\end{equation}
\end{lemma}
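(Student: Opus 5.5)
The plan is to treat the two parts of Lemma \ref{l88} separately. Both reduce to the theory of doubling weights: under the stated conditions (concave moduli of continuity raised to positive powers, $H,\frac{1}{H}\in L_\infty$, and $\int_0^\delta w_r=O(\delta w_r(\delta))$), every $w\in GJ$, and hence $w^2$, is a doubling weight on $[-1,1]$ after the substitution $x=\cos\theta$.

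\emph{Step 1: the Christoffel estimate \eqref{mavec}.} Put $\Delta_n(x):=\frac{\sqrt{1-x^2}}{n}+\frac{1}{n^2}$ and $I_n(x):=[x-\Delta_n(x),x+\Delta_n(x)]\cap[-1,1]$.
\begin{itemize}
\item First I would show that for doubling weights
$$\lambda_n(w^2,x)\sim\int_{I_n(x)}w^2(t)\,dt.$$
\item For the upper bound on $\lambda_n$, insert into the extremal problem a fast decreasing polynomial $p$ of degree $n$. It should satisfy $p(x)=1$ and $|p(t)|\le c\,(1+|t-x|/\Delta_n(x))^{-K}$ with $K$ large. The doubling property then makes the contribution of $[-1,1]\setminus I_n(x)$ comparable to the integral over $I_n(x)$.
\item For the lower bound, pick $p\in\Pi_n$ with $p(x)=1$. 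By a Remez/Bernstein-type argument $|p|\ge\frac12$ on a subinterval of $I_n(x)$ of comparable length, unless $\int p^2w^2$ is already larger. Doubling then yields $\int p^2w^2\ge c\int_{I_n(x)}w^2$.
\item The remaining task is to evaluate $\int_{I_n(x)}w^2$. Each factor $w_r(|t-t_r|)$ is essentially constant on $I_n(x)$ when $|x-t_r|\gg\Delta_n$; otherwise one uses the integral condition $\int_0^\delta w_r\sim\delta w_r(\delta)$. This gives the factors $w_r(|x-t_r|+\frac1n)$, and near the endpoints $w_0(\sqrt{1-x}+\frac1n)$ and $w_{m+1}$. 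The factor $\phi^{-1}$ absorbs the length $\Delta_n$, so that only $\frac1n$ remains.
\item I would follow the normalization of \cite{mave}, in which the displayed product represents $w$ with $\phi^{-1}$ built in.
\end{itemize}

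\emph{Step 2: the Jackson estimate \eqref{matoe}.} I would use the Mastroianni--Totik construction of \cite{mato}.
\begin{itemize}
\item Approximate $w$ by polynomial-type weights. Because there are no inner zeros, there exist polynomials $q_n$ of degree $O(n)$ with $q_n\sim w$ on the "$n$-relevant" part of $[-1,1]$, namely away from $\frac{1}{n^2}$-neighbourhoods of $\pm1$.
\item Take a doubling-weight Jackson kernel $J_n$ (a power of the Fej\'er kernel in $\theta$) and set $P_n(f)=\int f(t)J_n(x,t)\,dt$. Then $\|w(f-P_nf)\|_\infty\le c\,\omega_\phi^s(f,\frac1n)_w+\frac{c}{n^s}\|wf\|_\infty$, where the second term comes from the endpoint regions.
\item Finally bound the Ditzian--Totik modulus by the $K$-functional: $\omega_\phi^s(f,\frac1n)_w\le\frac{c}{n^s}\|wf^{(s)}\phi^s\|_\infty$. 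This step uses a weighted Hardy-type inequality, valid because $w$ satisfies the doubling and $A_\infty$-type conditions.
\end{itemize}

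The main obstacle is the construction in Step 2 of polynomials that mimic $w$ uniformly, and the verification that the weighted modulus is controlled at the endpoints where $w$ may vanish or blow up. If inner zeros were present, uniform weighted approximation could fail near the $t_r$. That is exactly why this hypothesis is imposed, and why I would cite \cite{mato} for this step rather than redo it.
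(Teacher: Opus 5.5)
The paper does not prove this lemma. It quotes \eqref{mavec} from \cite[Theorem 3.1]{mave} and \eqref{matoe} from \cite[Theorem A]{mato}. Your Step 2 also ends by citing \cite{mato}, so there you coincide with the paper. That is the right call, because your sketch alone would not suffice: an operator $\int f(t)J_n(x,t)\,dt$ with a nonnegative kernel, such as a power of the Fej\'er kernel, is saturated at order $n^{-2}$, so it cannot give $\omega^s_\phi$ for $s\ge 3$.

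Your Step 1 is a genuinely different route. Instead of quoting \cite{mave}, you use the general estimate $\lambda_n(W,x)\sim\int_{I_n(x)}W$ for doubling $W$ and then evaluate the integral. This shows that the GJ structure is used only for doubling and for the final computation. After $x=\cos\theta$ that computation is immediate: $\phi^{-1}dx=d\theta$, $I_n(x)$ becomes a $\theta$-interval of length $\sim\frac1n$, and $w_r(2\delta)\le Cw_r(\delta)$ together with $\int_0^\delta w_r\le C\delta w_r(\delta)$ produce the factors $w_r(\cdot+\frac1n)$. The price is that the doubling theorem must itself be proved or cited, and its lower half is where the work lies. Your Remez/Bernstein sentence does not close as written, because Bernstein's inequality is controlled by $\|p\|_\infty$, not by $p(x)=1$. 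The standard fix:
\begin{itemize}
\item Take $y$ maximizing $|p(t)|^2\Lambda(t)$, where $\Lambda(t):=\int_{I_n(t)}W$.
\item Doubling then gives $|p(t)|\le C|p(y)|\left(1+|t-y|/\Delta_n(y)\right)^{L}$.
\item A Bernstein inequality for $pQ$, with $Q$ fast decreasing at $y$, gives $|p'|\le C|p(y)|/\Delta_n(y)$ near $y$.
\item Hence $|p|\ge\frac12|p(y)|$ on an interval of length $c\Delta_n(y)$ around $y$, and so $\int p^2W\ge c|p(y)|^2\Lambda(y)\ge c|p(x)|^2\Lambda(x)$.
\end{itemize}

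There is also a normalization error you must fix. Your claim that $w^2$ is doubling whenever $w\in GJ$ is false in general. The square $w^2$ carries the factor $\phi^{-2}=(1-x^2)^{-1}$, so near $x=1$ it behaves like $(1-x)^{-1}w_0^2(\sqrt{1-x})$, which is not integrable when, e.g., $w_0\equiv1$. Even when $w^2$ is integrable, $\int_{I_n(x)}w^2\sim\frac1n\left(\sqrt{1-x^2}+\frac1n\right)^{-1}w_0^2(\cdot)\,w_{m+1}^2(\cdot)\prod_r w_r^2(\cdot)$. This is not \eqref{mavec}, because the length of $I_n(x)$ absorbs only one factor $\phi^{-1}$. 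Formula \eqref{mavec} is the estimate for the Christoffel function of the GJ weight itself, $\lambda_n(W,x)$ with $W\in GJ$; equivalently, read the lemma with $w^2\in GJ$. This is also how the paper applies it in the proof of Theorem \ref{t44}, where it appears as $\lambda_n(w,x)$. Run Step 1 for that $W$ and your computation yields \eqref{mavec}. The second endpoint factor should then be evaluated at $\sqrt{1+x}+\frac1n$; the $\sqrt{1-x}$ printed there is a misprint.
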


Let $\mu>-1$,  $w(\varrho)=(\nu^2-\varrho^2)^\mu\chi_{[0,\nu]}(\varrho)$.  Observe, that with $(\nu+\varrho)^\mu=H(\varrho)$, $w_J:=w(\varrho)\varrho^{n+|a|-1}\in GJ$. Let $\{p_k\}_{k=0}^\infty$ be the system of orthonormal polynomials with respect to $w_J$ on $[0,\nu]$. If $M_{sph,a}(\hat{f})(0,\cdot)\in L^2_{w_J}([0,\nu])$ then it can be expanded with respect to $\{p_k\}$, i. e. there exists
$$c_k:=\int_0^\nu M_{sph,a}(\hat{f})(0,\varrho)p_k(\varrho)w_J(\varrho)d\varrho = K_{B,w}^kf(0).$$
Recalling the notation $\gamma=\frac{d+|a|}{2}-1$, in view of \eqref{H(Imu)}
$$\varphi(r)=\frac{2^{\gamma+\mu}\nu^{\gamma+\mu+1}\Gamma(\gamma+1)\Gamma(\mu+1)}{\Gamma(\gamma+\mu+1)}j_{\gamma+\mu+1}(\nu r).$$
Thus
$$SCE_{B,w}(f)(r)=c(d,a,\mu)\sum_{k=0}^\infty K_{B,w}^k(f)(0)K_{B}^k(j_{\gamma+\mu+1}(\nu r)),$$
where we also have
$$K_{B}^k(j_{\gamma+\mu+1}(\nu r))=c(n,a,\mu)\left(\mathcal{H}_\gamma^{-1}(p_k)*_\gamma j_{\gamma+\mu+1}(\nu \cdot)\right)(r).$$
With the notation $S=\sum_{k=0}^\infty c_kp_k$ and $\tilde{S}:= \sum_{k=0}^\infty c_k\tilde{\varphi}_k$, assuming that the inversion formula holds, as above, we have
$$\left|\left(\left(M_{sph,a}f(0,\cdot)-\sigma_n(\tilde{S})\right)*_\gamma j_{\gamma+\mu+1}(\nu \cdot)\right)(r)\right|$$ $$\le \int_0^\nu \left| \left(M_{sph,a}(\hat{f})(0,\varrho)-\sigma_n(S)(\varrho)\right)w(\varrho)j_\gamma(r\varrho)\varrho^{2\gamma+1}\right|d\varrho $$ $$\le c \|((M_{sph,a}(\hat{f})(0,\cdot)-\sigma_n(S))w\|_\infty.$$
Now we can formulate our theorem as follows.

\medskip

\begin{theorem}\label{t44} Let $f\in L^1_a$ be continuous and of locally bounded variation. Assume further that $\mathrm{supp}\hat{f}\in B(0,\nu)$ and $M_{sph,a}(\hat{f})^{(s)}(0,\cdot)w\phi^s\in L^\infty([0,\nu]$. Then
$$\left\|(M_{sph,a}(f)(0,\cdot)-\sigma_n(\tilde{S}))*_aj_{\gamma+\mu+1}(\nu\cdot)\right\|_\infty=O\left(n^{-\frac{s}{s+1}}\right).$$
\end{theorem}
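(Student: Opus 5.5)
The plan is to reduce the statement, via the inequality displayed just before Theorem \ref{t44}, to a weighted uniform estimate on the spectral side. Write $F(\varrho):=M_{sph,a}(\hat{f})(0,\varrho)$ on $[0,\nu]$. That inequality bounds the left-hand side by $c\|(F-\sigma_n(S))w\|_\infty$. Since $\varrho^{2\gamma+1}$ is bounded on $[0,\nu]$ and $|j_\gamma|\le 1$ by \eqref{j1}, the constant does not depend on $r$. So it suffices to show $\|(F-\sigma_n(F,w_J))w\|_\infty=O(n^{-\frac{s}{s+1}})$.

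For this I will follow the scheme of Lemma \ref{dbecs}. Let $m<n$ and let $P_m$ be the best weighted uniform approximant of $F$ of degree $m$. By \eqref{f-sig},
$$F-\sigma_n(F)=(F-P_m)+\Big(\tfrac{m}{n}P_m-\tfrac{m+1}{n}\sigma_{m+1}(P_m)\Big)+\sigma_n(P_m-F)=:\Sigma_1+\Sigma_2+\Sigma_3 .$$
The three terms are handled as follows.

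\begin{itemize}
\item $\Sigma_1$: we have $\|\Sigma_1 w\|_\infty\le E_m(F)_w$. To bound this I use \eqref{matoe} of Lemma \ref{l88}, which applies because $w_J$ (transplanted to $[-1,1]$) lies in $GJ$ with no inner zeros, its only singular points being the endpoints $0$ and $\nu$. Together with the hypothesis $F^{(s)}w\phi^s\in L^\infty$ this gives $E_m(F)_w\le c\,m^{-s}$. Here $\|wF\|_\infty<\infty$, since $\hat f$ is bounded because $f\in L^1_a$.
\item $\Sigma_3$: a weighted Lebesgue-constant bound for Cesàro means in the $GJ$ setting will give $\|\Sigma_3 w\|_\infty\le \Lambda_n E_m(F)_w$.
\item $\Sigma_2$: the weighted norm of $\sigma_{m+1}(P_m)$ is at most $\Lambda_{m}\|P_mw\|_\infty$, and $\|P_mw\|_\infty\le 2\|Fw\|_\infty+E_m$ is bounded. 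Hence $\|\Sigma_2w\|_\infty\le c\,\frac{m}{n}(1+\Lambda_m)$.
\end{itemize}

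The key step is the bound on $\Lambda_n$. I would prove it by rerunning the argument of Lemma \ref{gences}: split the kernel near and away from $x$ at scale $d(n,x)=\lambda_n(w_J,x)/w_J(x)$, and use Cauchy--Schwarz together with Christoffel--Darboux. On a finite interval $\gamma_{k-1}/\gamma_k$ is bounded, so the quantity to control is $\frac1n\,w^2(x)/\lambda_n(w_J,x)$. By \eqref{mavec}, $\lambda_n\sim \frac1n\times$ the weight evaluated at $x$ shifted by $\frac1n$ near each endpoint. In the interior this ratio is $O(1)$. Near the endpoints, the factor $\varrho^{2\gamma+1}$ in $w_J$ but not in $w$ is compensated, and a possibly singular factor $(\nu-\varrho)^\mu$ with $\mu<0$ is kept under control, by choosing the auxiliary multiplier $\phi$ in Lemma \ref{gences} appropriately. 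I expect this to give $\Lambda_n\le c$, or at worst a power $n^{\epsilon}$ with small $\epsilon$.

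Granting $\Lambda_n=O(1)$, the total is $c(m^{-s}+m/n)$. Choosing $m=n^{\frac{1}{s+1}}$ balances the two terms and gives $n^{-\frac{s}{s+1}}$, which is exactly the claimed rate; so the exponent in the statement is consistent with bounded Lebesgue constants.

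The main obstacle is this uniform bound on the weighted Cesàro operator near $\varrho=0$ and $\varrho=\nu$. The mismatch between the weight $w_J=w\varrho^{2\gamma+1}$ defining the orthogonality and the weight $w$ in which we measure error must be absorbed by the Christoffel function asymptotics \eqref{mavec}. If only a power loss $n^\epsilon$ can be shown, I would instead balance $m^{-s}n^\epsilon$ against $m/n$. To recover the stated exponent I would then try measuring the error in the weight $w\varrho^{\gamma+\frac12}$, which is still sufficient after integrating against $\varrho^{2\gamma+1}$.
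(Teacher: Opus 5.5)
Your outline is the paper's proof: reduction to the spectral side, the splitting \eqref{f-sig}, the bound \eqref{matoe}, and $m=n^{1/(s+1)}$. The gap is in the one step that carries all the content, the uniform bound on the weighted Ces\`aro operators. You leave it as an expectation, and in the form you set it up it is false.

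You measure errors in $w=(\nu^2-\varrho^2)^\mu$, while orthogonality is with respect to $w_J=w\varrho^{2\gamma+1}$. Since $w(0)=\nu^{2\mu}\neq 0$, your $\Lambda_n$ is bounded below by a constant times the $(C,1)$ Lebesgue function at $\varrho=0$ of an expansion whose weight behaves like $\varrho^{2\gamma+1}$ there. By Szeg\H o's theorem on Ces\`aro means of Jacobi series \cite{sz}, this grows like $n^{2\gamma+\frac12}$ whenever $2\gamma+1>\frac12$ (in the numerical example $\gamma=5$). So neither $\Lambda_n=O(1)$ nor $\Lambda_n=O(n^\epsilon)$ holds. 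At the other end, for $\mu<0$ the weight $w$ blows up at $\nu$, while the polynomial $\sigma_n(S)$ in general does not vanish there; hence $\|(F-\sigma_n(S))w\|_\infty=\infty$.

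Rerunning Lemma \ref{gences} does not help. Its Christoffel--Darboux/Cauchy--Schwarz bound is $1+\frac1n w_J(x)/\lambda_n(w_J,x)$, and by \eqref{mavec} this is of size $\tilde\phi^{-1}$, where $\tilde\phi(\varrho)=\sqrt{\varrho(\nu-\varrho)}$. That is of order $n$ at distance $n^{-2}$ from the endpoints. An auxiliary multiplier $\phi$ does not make a Lebesgue constant bounded; it only weakens the norm on the output side. The paper's one-line claim that \eqref{mavec} yields $|\sigma_n(f,w)\sqrt w|\le c\|f\sqrt w\|_\infty$ has the same defect, so your suspicion is justified.

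The step can be repaired by keeping two norms throughout, as the paper does in the Laguerre case (Lemmas \ref{sikor} and \ref{dbecs}).
\begin{enumerate}
\item Lemma \ref{gences} with $\phi=\tilde\phi$, together with \eqref{mavec}, gives $\|\sigma_k(g)\sqrt{w_J}\,\tilde\phi\|_\infty\le c\|g\sqrt{w_J}\|_\infty$ uniformly in $k$. Near the endpoints, use that the weighted polynomial attains its norm away from $k^{-2}$-neighbourhoods of them.
\item Run your $\Sigma_1+\Sigma_2+\Sigma_3$ split with $\sqrt{w_J}$ on inputs and $\sqrt{w_J}\tilde\phi$ on outputs. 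This gives $\|(F-\sigma_n(S))\sqrt{w_J}\tilde\phi\|_\infty\le c\bigl(E_m(F)_{\sqrt{w_J}}+\frac mn\|F\sqrt{w_J}\|_\infty\bigr)$.
\item Instead of the inequality displayed before the theorem, bound the left-hand side by this quantity times $\int_0^\nu\sqrt{w_J}\,\tilde\phi^{-1}\,d\varrho$. That integral is finite because $\mu>-1$ and $\gamma>-1$.
\item Apply \eqref{matoe} with the weight $\sqrt{w_J}$. This requires $\sqrt{w_J}F^{(s)}\tilde\phi^s\in L^\infty$. For $\mu\le 0$ it follows from the stated hypothesis, since then $\sqrt{w_J}\le c\,w$; for $\mu>0$ it is an additional assumption.
\end{enumerate}
With $m=n^{1/(s+1)}$ this gives the claimed rate. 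Your fallback weight $w\varrho^{\gamma+\frac12}$ is the right correction at $\varrho=0$. It does not treat the endpoint $\varrho=\nu$, and it still leaves the boundedness unproved.
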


\proof The assumptions ensure that the inverse Hankel transform reconstructs $f$, and so does for $(M_{sph,a}(f)(0,\cdot)$. Thus it is enough to estimate $(M_{sph,a}(\hat{f})(0,\cdot)-\sigma_n(S))w$.
Since on finite intervals $\sup_{k=1}^n\frac{\gamma_{k-1}}{\gamma_k}$ is bounded, Lemma \ref{gences} implies
$$|\sigma_n(f,w)(x)\sqrt{w}(x)|\le c \|f\sqrt{w}\|_\infty \left(1+\frac{1}{n}\frac{w(x)}{\lambda_n(w,x)}\right).$$
Thus in view of \eqref{mavec}
$$|\sigma_n(f,w)(x)\sqrt{w}(x)|\le c \|f\sqrt{w}\|_\infty.$$
Let $P_{m,u,w}$ be the uniform best approximating polynomial of degree $m$. Taking into account \eqref{f-sig}, and then \eqref{matoe}, we have have
$$\|((M_{sph,a}(\hat{f})(0,\cdot)-\sigma_n(S))w\| \le \|(M_{sph,a}(\hat{f})(0,\cdot)-P_{m,u,w})w\|+\frac{m}{n}\|P_{m,u,w} w\|$$ $$+\frac{m+1}{n}\|\sigma_{m+1}(P_{m,u,w})w\|+\|\sigma_n(P_{m,u,w}-(M_{sph,a}(\hat{f})(0,\cdot))w\|$$ $$\le
c\|M_{sph,a}(\hat{f})(0,\cdot)w\|\left(E_m(f)_w+\frac{m}{n}\right)$$ $$\le c(\|M_{sph,a}(\hat{f})(0,\cdot)w\|+\|M_{sph,a}(\hat{f})^{(s)}(0,\cdot)w\phi^s\|)\left(\frac{1}{m^s}+\frac{m}{n}\right),$$
where $\|\cdot\|$ stand for the infinity norm. Thus selecting $m=n^{\frac{1}{1+s}}$, the theorem is proven.

\medskip

\noindent{\bf Remark.}

\noindent (1) Since Lemma \ref{l88} refers to any general Jacobi weight, Theorem \ref{t44} can be proven in more general circumstances. As above, our choice is motivated by the explicit form of $\mathcal{H}^{-1}_a(w)$.

\noindent (2) Due to the reconstructing property of de la Vall\ 'ee Poussin means, the speed of convergence is generally better in $V_n$ case than in $\sigma_n$ case. On the other hand, the degree of $V_n$ is twice bigger than the degree of $\sigma_n$.

\medskip

\section{Compactification of the support}

As we have seen, bandwidth limitation is a fairly powerful feature with several comfortable consequences. As in the standard setup, in Bessel case we can also approximate functions with even entire function of exponential type $\nu$, say. In this subsection we summarize these results in brief.

As in \cite{ch} (see also \cite{p}), define the radial function $\eta \in \mathcal{S}_e$ such that $|\eta|\le 1$,
$$\eta(x)=\eta(|x|)=\left\{\begin{array}{ll}1, \ws |x|\le 1,\\0, \ws |x|\ge 2\end{array}\right. .$$
For an $f\in L_a^p$ ($1\le p \le \infty$), let
\begin{equation}\label{Pnu}P_\nu(f)(x):=\mathcal{H}_a^{-1}\left(\eta\left(\frac{|\xi|}{\nu}\right)\hat{f}(\xi)\right)=\varrho_\nu*_af(x),\end{equation}
where $\varrho_\nu(x):=\mathcal{H}_a^{-1}\left(\eta\left(\frac{|\xi|}{\nu}\right)\right)$.

Since  $\eta \in \mathcal{S}_e$,  $\hat{\eta} \in \mathcal{S}_e\subset L^1_a$. In view of \eqref{Y} $\|P_\nu(f)\|_{p,a}\le \|\varrho_\nu\|_{1,a}\|f\|_{p,a}$ and by a simple replacement, it can be readily seen, that $\|\varrho_\nu\|_{1,a}=\|\varrho_1\|_{1,a}$. Thus we have (cf. \cite{p})\\
$\|P_\nu(f)\|_{p,a}\le c \|f\|_{p,a}$; $c\neq c(\nu)$. We also have $\mathrm{supp}\hat{P_\nu}(f)\subset B(0,\nu)$. \\
Since $\eta \in \mathcal{S}_e$, $P_\nu(f)\in M(\nu, p,a)$. Furthermore for all $h \in M(\nu, p,a)$, $P_\nu(h)=h$\\
Now we define the best approximation of an $f\in L_a^p$ with functions from $M(\nu, p,a)$.

\medskip

\begin{defi} Let $1\le p \le \infty$.
$$E_\nu(f)_{p,a}:=\inf\{\|f-h\|_{p,a} : h \in M(\nu, p,a)\}.$$
\end{defi}

\medskip

$P_\nu(f)$ is a near-best approximation to $f$, i.e. $\|f-P_\nu(f)\|_{p,a}\le c E_\nu(f)_{p,a}$.\\
Now we define the spherical Bessel difference of a function as follows.
$$\Delta_{y,a}f(x):=T^yf(x)-f(x), \ws \ws \Delta_{sph,r,a}f(x):=\frac{1}{|S^d_{1+}|_a}\int_{S^d_{1+}}\Delta_{r\Theta,a}f(x)\Theta^adS(\Theta);$$
 Let $1\le p \le \infty$. The spherical Bessel p-modulus of smoothness is
 $$\omega_{sph,m}(f,t)_{p,a}:=\sup_{0<h<t}\|\Delta_{sph,h,a}^m f\|_{p,a}.$$

 \medskip

 \begin{lemma}\label{leef}\cite[Corollary 1, Lemma 5]{ch} Let  $1\le p \le \infty$, $\nu>0$, $m\in\mathbb{N}$. If $f\in L^p_a$, we have
 $$ E_\nu(f)_{p,a}\le c \omega_{sph,m}\left(f, \frac{1}{\nu}\right)_{p,a}.$$
 If $f\in W^{m,p}_{\Delta_a}$. Then we have
 $$E_\nu(f)_{p,a}\le \frac{c}{\nu^{2m}}\|\Delta_a^m f\|_{p,a},$$
where $c=c(n,a,m)$.
 \end{lemma}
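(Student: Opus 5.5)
The first claim is a Jackson-type estimate, and I will get it from the near-best operator $P_\nu$ of \eqref{Pnu} together with a Bessel-type de la Vall\'ee Poussin construction. Since $E_\nu(f)_{p,a}\le \|f-h\|_{p,a}$ for every $h\in M(\nu,p,a)$, it is enough to build one $h$ whose error is controlled by the modulus. The kernel will be a power of a spherical difference. Set
$$\Delta_{sph,h,a}^m f=\sum_{j=0}^m\binom{m}{j}(-1)^{m-j}M_{sph,a}^{(j)}(f)(\cdot,h),$$
where $M^{(j)}$ denotes the $j$-fold iterate of $f\mapsto M_{sph,a}(f)(\cdot,h)$. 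By \eqref{Hsphm}, on the Hankel side this is multiplication by $(j_\gamma(h|\xi|)-1)^m$.

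I will average this over $h$ against a nonnegative normalized kernel $K_\nu(h)$. Its Hankel transform, in the radial variable of order $\gamma$, should be supported in $[0,\nu]$; a natural choice is $K_\nu(h)=c\,\nu^{2\gamma+2}\big(j_{\gamma'}(\nu h/2N)\big)^{2N}$ for large $N$, normalized in $L^1_{2\gamma+1}$. I then define
$$h_\nu:=f+(-1)^{m+1}\int_0^\infty \Delta_{sph,h,a}^m f\,K_\nu(h)h^{2\gamma+1}dh .$$
Expanding the $m$th power, $h_\nu$ is a combination of the terms $\int M^{(j)}(f)(\cdot,h)K_\nu(h)h^{2\gamma+1}dh$ with $j\ge1$. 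Each such term has Hankel transform $\hat f(\xi)\,\mathcal H_\gamma(K_\nu)$-type factors. Rather than check the support condition on these terms directly, I will instead apply $P_\nu$ from \eqref{Pnu}: setting $\tilde h=P_\nu(h_\nu)\in M(\nu,p,a)$ guarantees the support condition outright. The cost is the term $\|P_\nu(f-h_\nu)\|$, which is bounded by $c\|f-h_\nu\|$ using $\|P_\nu\|\le c$, so
$$\|f-\tilde h\|\le \|f-P_\nu f\|+c\|f-h_\nu\|.$$
This is circular, since $\|f-P_\nu f\|$ is what we are trying to bound. So instead I will estimate $f-P_\nu f$ directly. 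Because $1-\eta(|\xi|/\nu)$ vanishes on $|\xi|\le\nu$, and $1-j_\gamma(h|\xi|)\gtrsim 1$ for $h|\xi|\ge c_0$, I will factor
$$1-\eta(|\xi|/\nu)=\big(1-j_\gamma(|\xi|/\nu)\big)^m\,\Theta(|\xi|/\nu),$$
with $\Theta$ a smooth multiplier. Then $f-P_\nu f=\mathcal H_a^{-1}(\Theta(\cdot/\nu))*_a\Delta^m_{sph,1/\nu,a}f$, and Young's inequality \eqref{Y} gives $\|f-P_\nu f\|\le \|\mathcal H_a^{-1}\Theta\|_{1,a}\,\omega_{sph,m}(f,1/\nu)$.

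\textbf{Main obstacle.} The hard step is showing that $\Theta$ has an integrable inverse Hankel transform, since $j_\gamma$ has zeros and $1-j_\gamma$ has a zero of order $2m$ at $0$ only. For large argument, $1-j_\gamma(s)\to1$ without vanishing, because $|j_\gamma|<1$ for $s>0$. Hence $\Theta=(1-\eta)(1-j_\gamma)^{-m}$ is smooth, bounded away from the problem points, and equals $(1-j_\gamma)^{-m}$ at infinity. Its symbol behaviour $1+O(s^{-\gamma-1/2})$ should give $\mathcal H^{-1}\Theta=\delta+L^1$, and I would verify this via the asymptotics of $j_\gamma$ and a Mikhlin-type multiplier argument.

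The second claim then follows quickly. For $f\in W^{m,p}_{\Delta_a}$, I write $1-\eta(|\xi|/\nu)=|\xi|^{2m}\nu^{-2m}\,\Theta_1(|\xi|/\nu)$ with $\Theta_1(s)=(1-\eta(s))s^{-2m}$. Here $\Theta_1$ is smooth, supported in $s\ge1$, and decays like $s^{-2m}$, so $\mathcal H_a^{-1}\Theta_1\in L^1_a$ by the same multiplier lemma. Using $\mathcal H_a(-\Delta_a f)=|\xi|^2\hat f$ and \eqref{CH}, this gives $f-P_\nu f=\nu^{-2m}(\mathcal H_a^{-1}\Theta_1)_\nu*_a(-\Delta_a)^mf$. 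Young's inequality with dilation invariance of the $L^1_a$ norm then yields $E_\nu(f)\le c\nu^{-2m}\|\Delta_a^mf\|_{p,a}$.
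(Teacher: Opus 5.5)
\textbf{Gap: the multiplier step.} Your reduction of the first estimate to the identity $f-P_\nu f=\pm\,\mathcal H_a^{-1}\bigl(\Theta(|\cdot|/\nu)\bigr)*_a\Delta^m_{sph,1/\nu,a}f$ is fine. The step you flag as the main obstacle, however, is a genuine gap, and the way you propose to close it does not work.

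The claim $\mathcal H^{-1}\Theta=\delta+L^1$ is false. For large $s$ one has $\Theta(s)=1+m\,j_\gamma(s)+O(j_\gamma(s)^2)$. By \eqref{Hsphm}, $j_\gamma(|\xi|)$ is the Hankel transform of the normalized weighted surface measure $\sigma_1$ of the unit sphere. That measure is singular with respect to $x^a\,dx$, so $\mathcal H^{-1}\Theta-\delta$ contains the singular part $m\sigma_1$.

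A Mikhlin-type argument cannot repair this, for two reasons:
\begin{enumerate}
\item It only yields boundedness for $1<p<\infty$. The lemma is stated for $p=1$ and $p=\infty$ as well, and $p=\infty$ is the case the paper uses afterwards.
\item Its hypotheses fail. $\partial_s^k j_\gamma(s)$ decays only like $s^{-\gamma-1/2}$, not like $s^{-k}$, so in the effective dimension $2\gamma+2$ the required number of derivatives (more than $\gamma+1$) is never available.
\end{enumerate}
Decay of the symbol alone says nothing about integrability of the kernel.

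\textbf{What is needed.} You must show that $\Theta(|\xi|)$ is the Hankel--Stieltjes transform of $\delta$ plus a finite radial measure. Young's inequality for measures (which follows from $\|T^y\|_{p\to p}\le 1$) and dilation invariance of the total variation then give the bound for all $1\le p\le\infty$, with $c$ independent of $\nu$. This is an inverse-closedness statement and is not automatic, since the measure algebra is not inverse-closed in general (Wiener--Pitt). One way to prove it:
\begin{enumerate}
\item Pass via \eqref{rh} to the one-dimensional Bessel algebra of order $\gamma$.
\item Note that $j_\gamma^2$ is the transform of $\sigma_1*_\gamma\sigma_1$. This measure is absolutely continuous, because the Bessel product kernel is integrable for $\gamma>-\frac12$.
\item Write $(1-j_\gamma)^{-m}=(1+j_\gamma)^m(1-j_\gamma^2)^{-m}$.
\item Invert $1-j_\gamma^2$ on $[\frac12,\infty)$ by a local Wiener lemma in the unitization $\mathbb{C}\delta\oplus L^1_{2\gamma+1}(\mathbb{R}_+)$. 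Its Gelfand space is $[0,\infty]$: the characters $j_\gamma(\lambda\,\cdot)$, $\lambda\ge0$, together with the point at infinity.
\end{enumerate}
Alternatively, route the first estimate through a $K$-functional.

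\textbf{Second estimate.} This part is correct in substance, but again not by a multiplier theorem. The symbol $(1-\eta(s))s^{-2m}$ coincides with $s^{-2m}$ for $s\ge 2$. A direct estimate of its inverse Hankel transform shows it is at most of size $r^{2m-2\gamma-2}$ near $0$ (up to a logarithm) and rapidly decreasing at infinity, hence in $L^1_{2\gamma+1}$.

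\textbf{Support of $P_\nu f$.} $\eta(|\xi|/\nu)$ is supported in $|\xi|\le 2\nu$, so $P_\nu f$ only lies in $M(2\nu,p,a)$. Use $\eta(2|\xi|/\nu)$ instead; this is harmless, since $1-j_\gamma\neq 0$ on $[\frac12,\infty)$. Otherwise you would need a doubling property of $\omega_{sph,m}$, which is not available for free.
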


  (in dimension 1 see also \cite{p})

After this preparation we are in position to define the $\nu$-chromatic expansion of a function which is not band limited, i.e. the support of $\hat{f}$ is not compact, as follows.

\begin{equation}\label{chro7}S^{(0)}CE_B(f)_\nu=S^{(0)}CE_B(P_\nu(f)),\end{equation}
where $(0)$ refers to both cases, Spherical expansion at a general $x$ or at $0$. Since
$$\|M_{sph,a}(f)-S^{(0)}CE_B(f)_\nu\|\le \|M_{sph,a}(f)-P_\nu(f)\| +\|P_\nu(f)-S^{(0)}CE_B(f)_\nu\|,$$
we can state our convergence theorems in the following form. For instance consider the example below. Taking into consideration that $\widehat{M_{sph,a}P_\nu(f)}=\eta\left(\frac{|\cdot|}{\nu}\right) M_{sph,a}(\hat{f})$, by Lemma \ref{leef} we have

\begin{theorem} Let $\nu>0$, $f\in L^1_a(\mathbb{R}^d_+)$ is of locally bounded variation and continuous. Assume that $\tilde{\phi}^sM_{sph,a}(\hat{f})^{(s)}(0,\cdot)\in L_1(\tilde{w})([0,\nu]$, then we have the following estimations.\\
If $f\in L^\infty$ with finite modulus of smoothness, we have
$$D_n:=\|M_{sph,a}(f)(0,\cdot)-V_n(S^0SE_B(f))_\nu\|_\infty$$ $$\le c \omega_{sph,m}\left(f, \frac{1}{\nu}\right)_{\infty,a}+\frac{c}{n^{s}}\|\tilde{\phi}^s M_{sph,a}(\hat{f})^{(s)}(0,\cdot)\|_{1,\tilde{w}}.$$
If $f\in W^{m,p}_{\Delta_a}$, we have
$$D_n\le  \frac{c}{\nu^{2m}}\|\Delta_a^m f\|_{\infty,a}+\frac{c}{n^{s}}\|\tilde{\phi}^s M_{sph,a}(\hat{f})^{(s)}(0,\cdot)\|_{1,\tilde{w}}.$$
\end{theorem}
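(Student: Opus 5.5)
We split the error by the triangle inequality already written before the statement:
$$D_n\le \|M_{sph,a}(f)(0,\cdot)-M_{sph,a}(P_\nu f)(0,\cdot)\|_\infty+\|M_{sph,a}(P_\nu f)(0,\cdot)-V_n(S^0SE_B(P_\nu f))\|_\infty .$$
By \eqref{chro7}, $V_n(S^0SE_B(f))_\nu$ is just the de la Vall\'ee Poussin mean of the chromatic expansion of the bandlimited function $P_\nu(f)$. We bound the two terms separately.

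\textbf{First term.} The spherical mean at zero is an average of $f$ over the positive part of the sphere against the normalized weight $\theta^a\,dS(\theta)/|S^d_{1+}|_a$. Hence
$$\|M_{sph,a}(g)(0,\cdot)\|_\infty\le\|g\|_\infty,$$
and we apply this with $g=f-P_\nu(f)$. Since $P_\nu(f)$ is a near-best approximation from $M(\nu,\infty,a)$, we get $\|f-P_\nu f\|_\infty\le cE_\nu(f)_{\infty,a}$. Lemma \ref{leef} then bounds this by $c\,\omega_{sph,m}(f,1/\nu)_{\infty,a}$ in the first case, and by $c\nu^{-2m}\|\Delta_a^mf\|_{\infty,a}$ when $f\in W^{m,\infty}_{\Delta_a}$.

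\textbf{Second term.} We apply Theorem \ref{t3} to $P_\nu(f)$.

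\emph{Hypotheses.} Since $\mathrm{supp}\,\widehat{P_\nu f}\subset \overline{B(0,\nu)}$, the remark after Proposition \ref{ent} makes $P_\nu f$ continuous, indeed entire, so it is locally of bounded variation. It lies in $L^1_a$ because $\|P_\nu f\|_{1,a}\le c\|f\|_{1,a}$ by \eqref{Y}.

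\emph{Bound.} Theorem \ref{t3} yields
$$\frac{c}{n^s}\,\|\tilde\phi^s\,(M_{sph,a}(\widehat{P_\nu f}))^{(s)}(0,\cdot)\|_{1,\tilde w}.$$
By radiality of the cutoff, $M_{sph,a}(\widehat{P_\nu f})(0,\varrho)=\eta(\varrho/\nu)\,M_{sph,a}(\hat f)(0,\varrho)$, as noted before the statement.

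\emph{Obstacle.} The main difficulty is to return from this product to the quantity $\|\tilde\phi^sM_{sph,a}(\hat f)^{(s)}(0,\cdot)\|_{1,\tilde w}$ in the statement. The support of $\eta(\cdot/\nu)$ reaches out to $2\nu$, while the hypothesis only controls $[0,\nu]$, where $\eta(\cdot/\nu)\equiv1$. We would handle this by running the Jacobi expansion on the interval containing $\mathrm{supp}\,\eta(\cdot/\nu)$. The first choice is to rescale the cutoff to $\eta(2|\xi|/\nu)$, so that the support lies in $[0,\nu]$, and to absorb the change into $P_{\nu/2}$. This alters only the constants in Lemma \ref{leef}.

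\emph{Leibniz expansion.} We then write
$$(\eta M)^{(s)}=\sum_{j\le s}\binom{s}{j}\eta^{(s-j)}M^{(j)} .$$
The derivatives of $\eta(\cdot/\nu)$ are bounded by $c\nu^{-(s-j)}$ and vanish wherever $\eta(\cdot/\nu)\equiv1$. The lower-order terms $M^{(j)}$, $j<s$, we would control by the $s$-th derivative term through a weighted Hardy/Taylor estimate on $[0,\nu]$; alternatively the constant $c$ in the statement is allowed to depend on $\nu$ and on those lower norms. The $j=s$ term gives exactly the stated norm. Adding the two terms yields both displayed estimates.
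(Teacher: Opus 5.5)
Your route is the paper's own: the same triangle inequality, the near-best property of $P_\nu$ with Lemma \ref{leef} for the first term, and Theorem \ref{t3} applied to $P_\nu(f)$ via $\widehat{M_{sph,a}P_\nu(f)}=\eta(|\cdot|/\nu)\,M_{sph,a}(\hat f)$ for the second. Write $M:=M_{sph,a}(\hat f)(0,\cdot)$. The paper covers the passage from $\eta M$ to $M$ in one sentence, without comment, and also glosses over $\eta(\cdot/\nu)$ being supported in $[0,2\nu]$.

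You rightly isolate this as the real difficulty. Rescaling the cutoff to $\eta(2|\xi|/\nu)$ is a sensible way to make Theorem \ref{t3} applicable on $[0,\nu]$. In the modulus case this also needs $\omega_{sph,m}(f,2/\nu)_{\infty,a}\le c\,\omega_{sph,m}(f,1/\nu)_{\infty,a}$, which is not in Lemma \ref{leef} and requires a separate argument.

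Your Leibniz step, however, does not close. No weighted Hardy/Taylor inequality can bound the terms $\eta^{(s-j)}M^{(j)}$, $j<s$, by $\|\tilde\phi^sM^{(s)}\|_{1,\tilde w}$. Take $\hat f(\xi)=\eta(|\xi|/\nu)$, so that $M\equiv1$ on $[0,\nu]$: the hypothesis norm is $0$, while $(\eta M)^{(s)}=\eta^{(s)}\not\equiv0$. Taylor's formula controls only the remainder; the Taylor polynomial of $M$ is invisible to $M^{(s)}$. Your fallback, letting $c$ depend on these lower-order quantities of $f$, proves a different, non-quantitative statement.

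The missing idea is that these lower-order terms must be absorbed by the \emph{first} term of the estimate. Two facts make this work: $\widehat{\Delta^m_{sph,h,a}f}(\xi)=(j_\gamma(h|\xi|)-1)^m\hat f(\xi)$, and $1-j_\gamma$ is bounded below on compact subsets of $(0,\infty)$. Hence, for a smooth bump $\chi$ supported in $[1/4,3/4]$, the multiplier $\chi(|\xi|/\nu)\bigl(j_\gamma(|\xi|/(2\nu))-1\bigr)^{-m}$ has an $L^1_a$ kernel of $\nu$-independent norm. Then \eqref{Y} and \eqref{szm} give $\|\mathcal{H}_\gamma^{-1}(\chi(\cdot/\nu)M)\|_\infty\le c\,\omega_{sph,m}(f,1/\nu)_{\infty,a}$. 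In the Sobolev case, the multiplier $\chi(|\xi|/\nu)(\nu/|\xi|)^{2m}$ gives $c\nu^{-2m}\|\Delta^m_af\|_\infty$ instead.

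Now write $M=P+R$, with $P$ the Taylor polynomial of degree $s-1$ at $\nu/2$.
\begin{itemize}
\item Your Taylor argument does handle $R$. Both $\|\mathcal{H}_\gamma^{-1}(\chi(\cdot/\nu)R)\|_\infty$ and the $R$-part of the Leibniz terms on $[\nu/2,\nu]$ (also near $\nu$, where $\tilde\phi$ vanishes) are $\le c\|\tilde\phi^sM^{(s)}\|_{1,\tilde w}$.
\item For $P$, equivalence of norms on $\Pi_{s-1}$ together with the scaling $\varrho\mapsto\varrho/\nu$ yields $\nu^{2\gamma+2}\max_{[0,\nu]}|P|\le c\bigl(\omega_{sph,m}(f,1/\nu)_{\infty,a}+\|\tilde\phi^sM^{(s)}\|_{1,\tilde w}\bigr)$. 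With Markov's inequality, this bounds every Leibniz term with $j<s$.
\end{itemize}
Since $n^{-s}\le1$, the total contribution of these terms is absorbed into $c\,\omega_{sph,m}(f,1/\nu)_{\infty,a}$ (resp. $c\nu^{-2m}\|\Delta_a^mf\|_{\infty,a}$), and the estimate follows in the stated form.
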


\medskip

\section{Appendix}

\subsection{Shannon-type sampling}

Let $\mathrm{supp}\hat{f}\subset [0,1]$. The expansion $\hat{f}$ in terms of orthogonal polynomials leads to chromatic expansion of $f$, and expansion by $\{j_\alpha(\lambda_n x)\}_{n=1}^\infty$, leads to a Shannon-type sampling theorem.\\
Assume, that $\mathrm{supp}\hat{f}\subset [0,1]$. Let $\lambda_k=\lambda_k(\alpha)$ be the $k$th positive zero of $j_\alpha$. Then we have
$$T^{\lambda_k}f(x)=\frac{1}{2^{2\alpha}\Gamma^2(\alpha+1)}\int_0^1\hat{f}(y)j_\alpha(\lambda_ky)j_\alpha(xy)y^{2\alpha+1}dy$$
and by \eqref{j1}
$$T^{\lambda_k}f(0)=f(\lambda_k)=\frac{1}{2^{2\alpha}\Gamma^2(\alpha+1)}\int_0^1\hat{f}(y)j_\alpha(\lambda_ky)y^{2\alpha+1}dy.$$
Now let us taking into consideration that
$$\int_0^1j_\alpha(\lambda_nx)j_\alpha(\lambda_mx)x^{2\alpha+1}dx=\sigma_n^2\delta_{mn},$$
where
$$\sigma_n^2=\frac{1}{2}(j'_\alpha(\lambda_n))^2=\frac{\lambda_n^2}{8(\alpha+1)^2}(j_{\alpha+1}(\lambda_n))^2,$$
and $\{\frac{1}{\sigma_n}j_\alpha(\lambda_n \cdot)\}_{n=1}^\infty$ is a complete orthonormal system in $L^2_{\alpha, [0,1]}$, see e.g. \cite{st} and the references therein. Thus $c(\alpha)\frac{f(\lambda_k)}{\sigma_k}$ is the $k$th Fourier-Bessel coefficient of $\hat{f}$, i.e.
$$\hat{f}(y)=\sum_{k=1}^\infty\frac{1}{\sigma_k^2}\int_0^1\hat{f}(t)j_{\alpha}(\lambda_kt)t^{2\alpha+1}dtj_{\alpha}(\lambda_ky)\chi_{[0,1]}(y).$$
If it is integrable term by term, by the inversion formula we have
$$f(x)=\frac{1}{2^{2\alpha}\Gamma^2(\alpha+1)}\sum_{k=1}^\infty\int_0^1\hat{f}(t)j_{\alpha}(\lambda_kt)t^{2\alpha+1}dt\frac{1}{\sigma_k^2}\int_0^1j_{\alpha}(\lambda_ky)j_{\alpha}(xy)y^{2\alpha+1}dy.$$
In view of \cite[(2.2)]{cctv} (see also the references therein) we have
$$\frac{1}{\sigma_k^2}\int_0^1j_{\alpha}(\lambda_ky)j_{\alpha}(xy)y^{2\alpha+1}dy\frac{\lambda_k^2j_{\alpha+1}(\lambda_k)}{2(\alpha+1)\sigma_k^2}\frac{j_\alpha(x)}{\lambda_k^2-x^2}.$$
Finally, in accordance with \cite[(26)]{j} we have
$$f(x)=4(\alpha+1)j_\alpha(x)\sum_{k=1}^\infty \frac{f(\lambda_k)}{j_{\alpha+1}(\lambda_k)}\frac{1}{\lambda_k^2-x^2}.$$

Since $\lambda_k\sim k\pi$ and $j_{\alpha+1}(\lambda_k)\sim k^{-(\alpha+\frac{3}{2})}$ (see e.g. \cite{e} and \cite{sz}), its convergence properties are rather weak.

\medskip

\subsection{Numerical illustration}

We include a short numerical illustration of the compact-support Bessel-chromatic expansion. The purpose of this section is to visualize the convergence behavior of the truncated Bessel chromatic sums and of the corresponding de la Vall\'ee Poussin means. We consider the multidimensional weighted Bessel setting
\(
    n=3,
    \
    \alpha=\left(\frac12,1,\frac32\right).
\)
Then
\(
    a_i=2\alpha_i+1,
    \
    a=(2,3,4),
    \
    |a|=9.
\)
Therefore
\(
    n+|a|-1=11,
    \
    \gamma=\frac{n+|a|}{2}-1=5.
\)
Thus the radial weight appearing in the Hankel representation is
\(
    w(\rho)=\rho^{11},
\)
and the corresponding radial Bessel kernel is \(j_5(r\rho)\). We take \(\nu=3\) and consider the compactly supported filtered Gaussian spectral family
\[
    \widehat f_\beta(\rho)
    =
    A_\beta e^{-0.2\rho^2}
    \left(1-\frac{\rho}{3}\right)_+^\beta,
    \qquad
    \beta=\frac12,\frac32,\frac52.
\]
Here \(A_\beta\) is chosen so that
\(
    \|\widehat f_\beta\|_{L^2([0,3],\rho^{11}d\rho)}=1.
\)
The Gaussian factor gives a smooth spectral profile, while
\(
    \left(1-\frac{\rho}{3}\right)_+^\beta
\)
imposes compact support and controls the endpoint regularity at \(\rho=3\). The values of the normalization constants used in the computation
are
\[
    A_{1/2}=0.0603241961,
    \qquad
    A_{3/2}=0.2636216153,
    \qquad
    A_{5/2}=0.7976714616.
\]

We consider the compactly supported filtered Gaussian spectral family
\[
\widehat f_\beta(\rho)
=
A_\beta e^{-0.2\rho^2}
\left(1-\frac{\rho}{3}\right)_+^\beta,
\qquad
\beta=\frac12,\frac32,\frac52.
\]
The parameter \(\beta\) controls the endpoint regularity at \(\rho=3\).

\begin{figure}[htbp]
\centering
\includegraphics[width=0.72\textwidth]{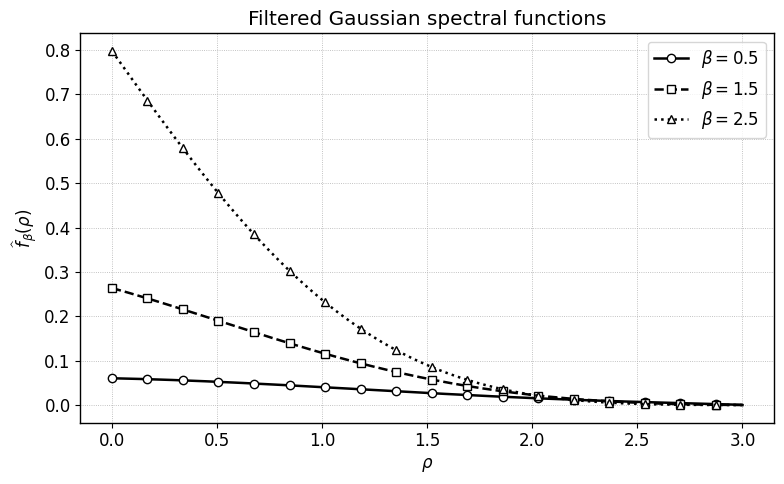}
\caption{Filtered Gaussian spectral functions for
\(\beta=1/2,3/2,5/2\).}
\label{fig:spectral-functions}
\end{figure}

The reference spherical mean is computed as
    $F_\beta(r)
    =
    \int_0^3
    \widehat f_\beta(\rho)j_5(r\rho)\rho^{11}\,d\rho .$

Let \(\{p_k\}_{k\ge0}\) be the orthonormal polynomial system on \([0,3]\)
with respect to the weight \(\rho^{11}\), that is,
    $\int_0^3 p_k(\rho)p_\ell(\rho)\rho^{11}\,d\rho=\delta_{k\ell}.$
The Bessel--chromatic coefficients are
$
    c_{k,\beta}
    =
    \int_0^3
    p_k(\rho)\widehat f_\beta(\rho)\rho^{11}\,d\rho,$
and the corresponding basis functions are
  $  \varphi_k(r)
    =
    \int_0^3
    p_k(\rho)j_5(r\rho)\rho^{11}\,d\rho .$
The \(N\)-th truncated reconstruction is
  $  S_{N,\beta}(r)
    =
    \sum_{k=0}^{N}c_{k,\beta}\varphi_k(r).$
We also compute the de la Vall\'ee Poussin mean $ V_{N,\beta}(r)
    =
    2\sigma_{2N,\beta}(r)-\sigma_{N,\beta}(r),$
where
 $\sigma_{N,\beta}(r)
    =
    \sum_{k=0}^{N-1}
    \left(1-\frac{k}{N}\right)c_{k,\beta}\varphi_k(r).$

All errors are measured in the relative uniform norm on \([0,10]\):
    $E^S_{N,\beta}
    =
    \frac{\|F_\beta-S_{N,\beta}\|_{L^\infty(0,10)}}
    {\|F_\beta\|_{L^\infty(0,10)}},$
and
   $ E^V_{N,\beta}
    =
    \frac{\|F_\beta-V_{N,\beta}\|_{L^\infty(0,10)}}
    {\|F_\beta\|_{L^\infty(0,10)}}.$
The use of relative errors is important because the functions \(F_\beta\)
have different amplitudes. In the computation,
\[
    \|F_{1/2}\|_\infty=163.3587791,
    \qquad
    \|F_{3/2}\|_\infty=95.95325259,
    \qquad
    \|F_{5/2}\|_\infty=58.24741283.
\]

\begin{table}[htbp]
\centering
\[
\begin{array}{c|ccc}
N & \beta=1/2 & \beta=3/2 & \beta=5/2\\
\hline
6  & 8.383\times10^{-7}  & 9.799\times10^{-8}  & 2.448\times10^{-8}\\
8  & 1.660\times10^{-7}  & 1.540\times10^{-8}  & 3.077\times10^{-9}\\
10 & 5.224\times10^{-8}  & 3.616\times10^{-9}  & 4.702\times10^{-10}\\
12 & 9.966\times10^{-9}  & 5.358\times10^{-10} & 5.465\times10^{-11}\\
16 & 1.530\times10^{-10} & 5.392\times10^{-12} & 3.581\times10^{-13}
\end{array}
\]
\caption{Relative \(L^\infty\)-errors
\(\|F_\beta-S_{N,\beta}\|_\infty/\|F_\beta\|_\infty\)
for the truncated Bessel--chromatic partial sums.}
\label{tab:partial-errors}
\end{table}

\begin{table}[htbp]
\centering
\[
\begin{array}{c|ccc}
N & \beta=1/2 & \beta=3/2 & \beta=5/2\\
\hline
6  & 1.274\times10^{-7}  & 1.556\times10^{-8}  & 3.313\times10^{-9}\\
8  & 1.925\times10^{-8}  & 1.818\times10^{-9}  & 3.720\times10^{-10}\\
10 & 4.398\times10^{-9}  & 3.121\times10^{-10} & 4.146\times10^{-11}\\
12 & 6.650\times10^{-10} & 3.671\times10^{-11} & 3.833\times10^{-12}\\
16 & 7.221\times10^{-12} & 2.611\times10^{-13} & 1.777\times10^{-14}
\end{array}
\]
\caption{Relative \(L^\infty\)-errors
\(\|F_\beta-V_{N,\beta}\|_\infty/\|F_\beta\|_\infty\)
for the de la Vall\'ee Poussin means.}
\label{tab:vp-errors}
\end{table}
\begin{figure}[htbp]
\centering
\includegraphics[width=0.72\textwidth]{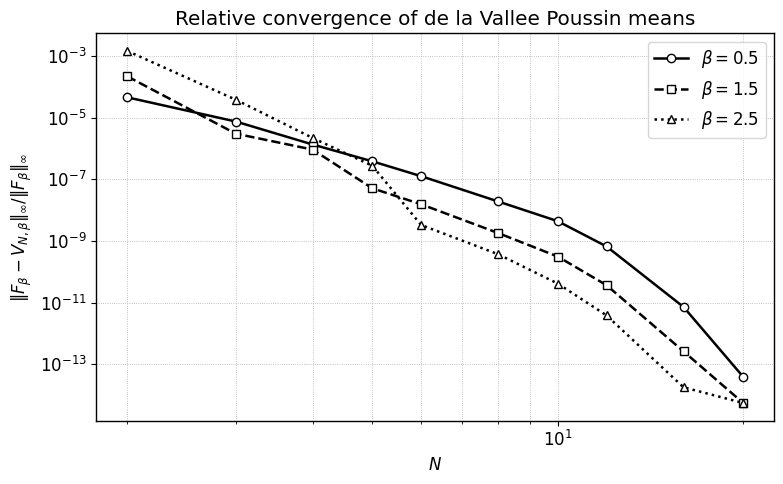}
\caption{Relative \(L^\infty\)-errors for the de la Vall\'ee Poussin
approximations \(V_{N,\beta}\). The decay becomes faster as the endpoint
regularity parameter \(\beta\) increases.}
\label{fig:vp-convergence}
\end{figure}

To quantify the decay, we estimate experimental convergence rates by fitting $ E_N\approx C N^{-s}$
on the log-log error curves over the range \(N=6,8,10,12,16\), before the
errors reach the level of numerical precision. This gives the approximate
rates shown in Table~\ref{tab:experimental-rates}.\\
\begin{table}[htbp]
\centering
\[
\begin{array}{c|cc}
\beta & S_{N,\beta} & V_{N,\beta}\\
\hline
1/2 & 8.478  & 9.695\\
3/2 & 9.717  & 10.947\\
5/2 & 11.113 & 12.181
\end{array}
\]
\caption{Experimental convergence rates obtained from log-log fits of the
relative errors over \(N=6,8,10,12,16\).}
\label{tab:experimental-rates}
\end{table}

The results show rapid convergence of both the ordinary partial sums and the
de la Vall\'ee Poussin means. After the initial low-order regime, the error
decreases faster as the endpoint regularity parameter \(\beta\) increases.
This is consistent with the approximation estimate above, where the convergence
rate depends on the weighted smoothness of the spectral side. The de la Vall\'ee
Poussin means give slightly smaller errors than the ordinary partial sums in
the tested range, which agrees with their role as a filtered approximation
procedure.

The coefficient decay gives the same indication. For larger \(\beta\), the
chromatic coefficients decay faster, showing that higher endpoint regularity
reduces the number of significant terms needed in the reconstruction. For
\(N\) close to \(20\), the errors are already near double-precision accuracy,
so increasing \(N\) further does not produce a visible improvement in this
computation.

The normalized coefficient decay is measured by
\(
\frac{|c_{k,\beta}|}{\|F_\beta\|_\infty}.
\)

\begin{figure}[htbp]
\centering
\includegraphics[width=0.72\textwidth]{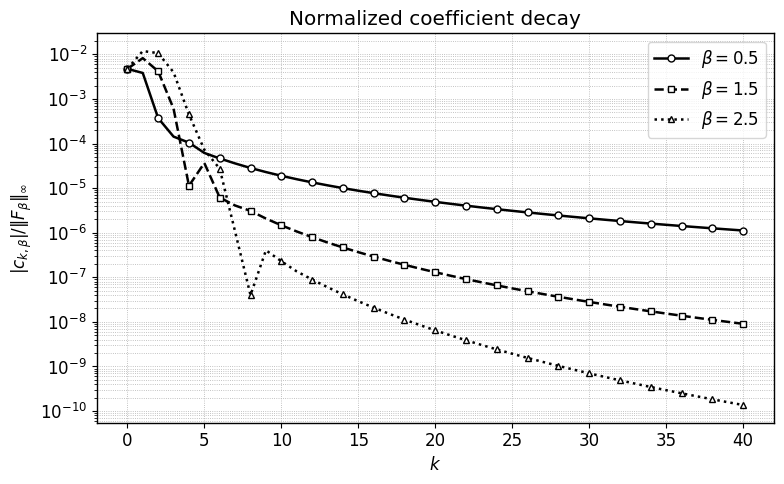}
\caption{Normalized decay of the Bessel--chromatic coefficients for
\(\beta=1/2,3/2,5/2\). Larger values of \(\beta\) produce faster coefficient
decay.}
\label{fig:coefficient-decay}
\end{figure}

\medskip

\vspace{5mm}

{\small{\noindent Both authors:\\
Department of Analysis and Operations Research,\\ Institute of Mathematics,\newline
Budapest University of Technology and Economics \newline
 M\H uegyetem rkp. 3., H-1111 Budapest, Hungary.

 \vspace{3mm}

\noindent \'A. P. Horv\'ath: g.horvath.agota@renyi.hu\\
M. Chegaar: mouna.chegaar@edu.bme.hu}


\medskip

\begin{thebibliography}{99}


\bibitem {be} Bateman H, Erd\'elyi A. Higher Transcendental Functions. New York:  McGraw-Hill; 1953. (vol.II).

\bibitem {bg} F. Bouzeffour, M. Garayev, On the fractional Bessel operator, {\it Integral Transforms and Special Functions} (2021), 230-246. DOI: 10.1080/10652469.2021.1925268

\bibitem {ch1} M. Chegaar, \'A. P. Horv\'ath,  Removable sets for fractional heat and fractional Bessel-heat equations. Integral Transforms and Special Functions, (2025) 1-20. https://doi.org/10.1080/10652469.2025.2567003

\bibitem {ch} M. Chegaar, \'A. P. Horv\'ath, Fractional Bessel-Sobolev and Bessel B-potential spaces, {\it J. of the Math. Soc. of Japan} (accepted)
https://doi.org/10.48550/arXiv.2509.03287


\bibitem {cdps} V. K. Chillara, E. S. Davis, C. Pantea, D. N. Sinha, Ultrasonic Bessel beam generation from radial modes of piezoelectric discs, {\it Ultrasonics}
{\bf 96} (2019) 140-148.

\bibitem {cctv} L. Colzani, A. Crespi, G. Travaglini, M. Vignati, Equiconvergence theorems for Fourier-Bessel expansions with application to harmonic analysis of radial functions in Euclidean and noneuclidean spaces, {\it Trans. of the Amer. Math. Soc.} {\bf 338} (1993) 43-55.

\bibitem {e} \'A. Elbert, Some recent results on the zeros of Bessel functions and orthogonal polynomials, {\it Journal of Computational and Applied Mathematics} {\bf 133} (2001) 65-83.

\bibitem {h1} \'A. P. Horv\'ath, Chromatic derivatives and expansions with weights, {\it Calcolo} {\bf 54} (2017) 1265 - 1291.

\bibitem {i0} A. Ignjatovi\'c, Numerical differentiation and signal processing, {\it Kromos Technology Technical Report}, Los Altos (2001)

\bibitem {i1} A. Ignjatovi\'c, Local approximations based on orthogonal differential operators, {\it The J. of Fourier Analysis and Appl.} {\bf 13} (3) (2007), 309-330.

\bibitem {i2} A. Ignjatovi\'c, Chromatic derivatives, chromatic expansions and associated spaces, {\it East J. Approx.} {\bf 15} (3) (2009), 263-302.

\bibitem {i3} A. Ignjatovi\'c, Frequency estimation using time domain methods based on robust differential operators, {\it 2010 IEEE 10th International Conference on Signal Processing (ICSP)} (2010), 151-154.

\bibitem {i4} A. Ignjatovi\'c, Chromatic derivatives, chromatic expansions and associated spaces II. (2025) https://arxiv.org/abs/2512.02326v1

\bibitem {iz} A. Ignjatovi\'c and A. I. Zayed, Multidimensional chromatic derivatives and series expansions, {\it Proc. of the Amer. Math. Soc.} {\bf 139} (10) (2011), 3513-3525.

\bibitem {j} A. J. Jerri, The Shannon sampling theorem - Its various extensions and applications, {\it Proc. of the IEEE} {\bf 65} (1977) 1565-1596.

\bibitem {k} H. P. Kramer, A generalized sampling theorem, {\it J. Math. Phys.} {\bf 38} (1959) 68-72.

\bibitem {lelu}  E. Levin, D. Lubinsky, Orthogonal polynomials for exponential weights $x^{2\varrho}e^{-Q(x)}$ on $[0,d)$, {\it Journal of Approximation Theory} {\bf 134} (2005), 199-256.

\bibitem {le} B.M. Levitan, Expansion in Fourier series and integrals with Bessel functions, (Russian) Uspehi Matem. Nauk (N.S.) 6(1951), No. 2(42),
102-143.

\bibitem{l} L. N. Lyakhov, A class of hypersingular integrals. {\it Dokl. Akad. Nauk SSSR} {\bf 315} (2) (1990) 291-296.
https://www.mathnet.ru/eng/dan6329

\bibitem{ms} G. Mastroianni, J. Szabados, Polynomial approximation on the real semiaxis with generalized Laguerre weights, {\it Studia Univ. Babes-Bolyai, Mathematica} {\bf LII} (2007), 105-128.

\bibitem{mave} G. Mastroianni, P. V\'ertesi, Some application of generalized Jacobi weights, {\it Acta Math. Hungar.} {\bf 77} (1997) 323-357.

\bibitem{mt} G. Mastroianni, V. Totik, Jackson type inequalities for doubling and $A_p$ weights. {\it Rend. Del Circolo Math. Di Palermo} Serie II Suppl. {\bf 52} 83-99 (1998)

\bibitem{mato} G. Mastroianni, V. Totik, Best bpproximation and moduli of smoothness for doubling weights, {\it Journal of Approximation Theory} {\bf 110} (2001) 180-199.

\bibitem{o} F. Oberhettinger, Tables of Bessel Transforms, Springer-Verlag, New York Heidelberg Berlin (1972).

\bibitem{p} S. S. Platonov, Bessel harmonic analysis and approximation of functions on the half-line, {\it Izv. RAN, Ser. Mat.},  {\bf 71} (2007), 149-196 (in Russian); translated in {\it Izv. Math.}, {\bf 71}:5 (2007), 1001-1048.

\bibitem {pbm} A.P. Prudnikov, Yu.A. Brychkov, O.I. Marichev, Integrals and series, Vol. 2, Special Functions. Gordon Breach Sci. Publ., New York,
1990.

\bibitem {s} E. Shishkina, Inversion of the weighted spherical mean, in Operator Theory and Harmonic Analysis, Springer Proceedings in Mathematics and Statistics, {\bf 357} 2021, 507-520.

\bibitem {esk} E. Shishkina, I. Ekincioglu, C. Keskin, Generalized Bessel potential and its application to non-homogeneous singular screened Poisson equation, {\it Integral Transforms and Special Functions} {\bf 32} (12) (2021) 932-947. https://doi.org/10.1080/10652469.2020.1867983

\bibitem {ss} E. Shishkina, S. Sitnik, {\it Transmutations, Singular and Fractional Differential Equations With Applications to Mathematical Physics}, Academic Press, London (2020).

\bibitem{st} K. Stempak, On convergence and divergence of Fourier-Bessel series, {\it Electronic Transactions on Numerical Analysis} {\bf 14} (2002) 223-235.

\bibitem{sz}  G. Szeg\H o, Orthogonal Polynomials, Amer. Math. Soc. New York, 1959.

\bibitem {x} Y. Xu, Approximation by polynomials in Sobolev spaces with Jacobi weight {\it J. Fourier Anal. Appl.} (2018) 1438-1459
https://doi.org/10.1007/s00041-017-9581-3

\bibitem {z1} A. I. Zayed, Generalizations of chromatic derivatives and series expansions, {\it IEEE Trans. on Signal Processing} {\bf 58} (3) (2010), 1638-1647.

\bibitem {z3} A. I. Zayed, Chromatic expansions of generalized functions, {\it Integral Transforms and Special Functions} {\bf 22} (4-5) (2011), 383-390.

\bibitem {z2} A. I. Zayed, Chromatic expansions in function spaces, {\it Trans. of the Amer. Math. Soc.} {\bf 366} (8) (2014), 4097-4125.




\end{thebibliography}
\end{document}